\theoremstyle{plain}
\newtheorem{theorem}{Theorem}[section]
\newtheorem{corollary}[theorem]{Corollary}
\newtheorem{proposition}[theorem]{Proposition}
\theoremstyle{definition}
\newtheorem{definition}[theorem]{Definition}
\newtheorem{example}[theorem]{Example}
\newtheorem{problem}{Problem}
\newtheorem{question}{Question}
\theoremstyle{remark}
\newtheorem{remark}{Remark}[section]
\newcommand{\bR}{\mathbb{R}}
\newcommand{\R}{\bR}
\newcommand{\bN}{\mathbb{N}}
\newcommand{\N}{\bN}
\newcommand{\cA}{\mathcal{A}}
\newcommand{\cI}{\mathcal{I}}
\newcommand{\I}{\cI}
\newcommand{\cP}{\mathcal{P}}
\newcommand{\continuum}{\mathfrak{c}}
\newcommand{\fin}{\mathrm{Fin}}
\renewcommand{\subset}{\subseteq}
\DeclareMathOperator{\ran}{ran}
\DeclareMathOperator{\FS}{FS}
\DeclareMathOperator{\nosnik}{supp}
\begin{document}

\title{The ideal test for the divergence of a series}

\author[R.~Filip\'{o}w]{Rafa\l{} Filip\'{o}w}
\address[R.~Filip\'{o}w]{Institute of Mathematics\\ Faculty of Mathematics, Physics and Informatics\\ University of Gda\'{n}sk\\ ul. Wita Stwosza 57\\ 80-308 Gda\'{n}sk\\ Poland}
\email{Rafal.Filipow@ug.edu.pl}
\urladdr{http://mat.ug.edu.pl/~rfilipow}

\author[A.~Kwela]{Adam Kwela}
\address[A.~Kwela]{Institute of Mathematics\\ Faculty of Mathematics\\ Physics and Informatics\\ University of Gda\'{n}sk\\ ul.~Wita  Stwosza 57\\ 80-308 Gda\'{n}sk\\ Poland}
\email{Adam.Kwela@ug.edu.pl}
\urladdr{https://mat.ug.edu.pl/~akwela}

\author[J.~Tryba]{Jacek Tryba}
\address[J.~Tryba]{Institute of Mathematics\\ Faculty of Mathematics, Physics and Informatics\\ University of Gda\'{n}sk\\ ul. Wita Stwosza 57\\ 80-308 Gda\'{n}sk\\ Poland}
\email{Jacek.Tryba@ug.edu.pl}

\date{\today}

\subjclass[2020]{Primary: 40A05, 46B87, 40A35 Secondary: 15A03, 46B45.}

%%% 40A05 (1973-now) Convergence and divergence of series and sequences 
%%% 46B87 (2020-now) Lineability in functional analysis [See also 15A03]
%%% 15A03 (1973-now) Vector spaces, linear dependence, rank, lineability 
%%% 40A35 (2010-now) Ideal and statistical convergence [See also 40G15] 
%%% 46B45 (1991-now) Banach sequence spaces [See also 46A45] 
 
\keywords{Lineability, spaceability, algebrability, convergent series, convergence test, Olivier's theorem, ideal, ideal convergence, summable ideal, Borel ideal.}

%-------------------------------------------------------------------------------
% Abstract
%-------------------------------------------------------------------------------

\begin{abstract}
We generalize the classical Olivier's theorem which says that for any convergent series $\sum_n a_n$ with positive nonincreasing real terms the sequence $(n a_n)$ tends to zero. Our results encompass many known generalizations of Olivier's theorem and give some new instances. The generalizations are done in two directions: we either drop the monotonicity assumption completely or we relax it to the monotonicity on a large set of indices. In both cases, the convergence of $(na_n)$ is replaced by ideal convergence.  

In the second part of the paper, we examine families of sequences for which the assertions of our generalizations of Olivier's theorem fail. Here, we are interested in finding large linear and  algebraic substructures  in these families.
\end{abstract}

\maketitle

%%%%%%%%%%%%%%%%%%%%%%%%%%%%%%%%%%%%%%%%%%%%%%%%%%%%%%%%%%%%%%%%%%%%%%%%%

\section{Introduction}

A basic test for divergence of an infinite series says that for any sequence $(a_n)$ of reals we have $$\left| \sum_{n=1}^\infty a_n\right| <\infty \implies  \lim_{n\to\infty} a_n=0.$$ 
The following theorem says more about the speed of convergence to zero of the terms of a convergent series with positive and nonincreasing terms.
\begin{theorem}[Olivier~\cite{MR1577632}]
\label{thm:Olivier}
For every nonincreasing sequence $(a_n)$ of positive reals we have 
$$ \sum_{n=1}^\infty a_n<\infty \implies  \lim_{n\to\infty} na_n=0.$$ 
\end{theorem}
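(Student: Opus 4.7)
The plan is to combine the Cauchy criterion for the convergent series $\sum a_n$ with the monotonicity hypothesis. Because the series converges, for every $\varepsilon>0$ there exists $N$ such that for all $n>m\geq N$ the tail sum $\sum_{k=m+1}^{n} a_k$ is less than $\varepsilon$. Because $(a_n)$ is nonincreasing, each of these $n-m$ terms is at least $a_n$, which yields the one-sided inequality
\[
(n-m)\,a_n \;\leq\; \sum_{k=m+1}^{n} a_k.
\]
This converts a statement about partial sums into a pointwise bound on $a_n$.

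The next step is to choose $m$ as a function of $n$ so that the factor $n-m$ is comparable to $n$. The natural choice is $m=\lfloor n/2\rfloor$; then $n-m\geq n/2$, so
\[
\frac{n}{2}\,a_n \;\leq\; \sum_{k=\lfloor n/2\rfloor+1}^{n} a_k.
\]
For $n\geq 2N$ the right-hand side is a Cauchy tail and thus less than $\varepsilon$, which gives $n a_n \leq 2\varepsilon$. Since $\varepsilon$ is arbitrary and $n a_n>0$, this forces $n a_n\to 0$.

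I do not anticipate a genuine obstacle here: the proof is essentially one calculation, and the only conceptual point is to notice that monotonicity lets us replace $(n-m)$ copies of $a_n$ by a tail of the convergent series. A minor bookkeeping issue is that $\lfloor n/2\rfloor$ produces slightly different bounds for even and odd $n$, but both are dominated by the same expression $2\sum_{k=\lfloor n/2\rfloor+1}^{n}a_k$, so no case split is really needed. The positivity of the terms is used only to know that $n a_n\geq 0$, so that the upper bound $n a_n\leq 2\varepsilon$ suffices to conclude convergence to zero.
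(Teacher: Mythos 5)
Your argument is correct and complete: the Cauchy criterion plus monotonicity gives $(n-\lfloor n/2\rfloor)a_n\leq\sum_{k=\lfloor n/2\rfloor+1}^{n}a_k<\varepsilon$ for $n\geq 2N$, and since $n-\lfloor n/2\rfloor\geq n/2$ this yields $0<na_n<2\varepsilon$, as you say. The paper itself does not prove Theorem~\ref{thm:Olivier}; it is quoted as a classical result with a citation to Olivier, and the authors instead recover it as the special case $\I=\fin$ of their ideal-theoretic generalizations (e.g.\ Corollary~\ref{thm:fgm}, since every finite set has upper asymptotic density $0<1$ and $\fin^*$-convergence is ordinary convergence). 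Your direct Cauchy-tail argument is the standard elementary proof and is entirely appropriate here; the paper's route buys generality (arbitrary ideals, weakened monotonicity) at the cost of the machinery of Section~\ref{sec:general4-monotone}, which would be overkill if the classical statement were the only goal.
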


A nonempty family $\I\subseteq\cP(\N)$ of subsets of $\N$ is an \emph{ideal on $\N$} if it is closed under taking subsets and finite unions of its elements, $\N\notin\I$ and $\I$ contains all finite subsets of $\N$. By $\fin$ we denote the ideal of all finite subsets of $\N$.
The family 
$$\I_{1/n}=\left\{A\subseteq\N: \sum_{n\in A}\frac{1}{n}<\infty\right\}$$ 
is an ideal on $\N$ that is called \emph{the summable ideal}.

For an ideal $\I$ on  $\N$ and a sequence  $(a_n)$ of reals, we write 
$$\I-\lim a_n=L$$ 
if
$\{n\in\N: |x_n-L|\geq \varepsilon\}\in\I$ for every $\varepsilon>0$. Obviously  $\fin-\lim a_n$ coincides with the ordinary limit of a sequence $(a_n)$. (More information on ideal convergence can be found e.g.~in \cite{MR1844385}.)

The following theorem shows that using the ideal convergence, we can  drop the  monotonicity assumption in Olivier's theorem.
\begin{theorem}[{\v{S}}al{\'a}t-Toma~\cite{MR2031274}]
\label{thm:salat-toma}
Let $\I$ be an ideal on $\N$. The following conditions are equivalent. 
\begin{enumerate}
    \item For every  sequence $(a_n)$ of positive reals we have 
$$\sum_{n=1}^\infty a_n<\infty \implies \I-\lim n a_n=0.$$
\item The ideal $\I$ extends the summable ideal:
$$\I_{1/n}\subseteq  \I.$$
\end{enumerate}
\end{theorem}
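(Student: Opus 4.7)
The plan is to prove the two implications $(2)\Rightarrow(1)$ and $(1)\Rightarrow(2)$ directly.

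For $(2)\Rightarrow(1)$, I would assume $\I_{1/n}\subseteq\I$ and let $(a_n)$ be a sequence of positive reals with $\sum a_n<\infty$. To show $\I\text{-}\lim na_n=0$, fix $\varepsilon>0$ and consider the set $A_\varepsilon=\{n\in\N: na_n\geq\varepsilon\}$. The key observation is that $n\in A_\varepsilon$ forces $1/n\leq a_n/\varepsilon$, so $\sum_{n\in A_\varepsilon}1/n\leq \varepsilon^{-1}\sum_n a_n<\infty$. Hence $A_\varepsilon\in\I_{1/n}\subseteq\I$, which is exactly what we need.

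For the converse $(1)\Rightarrow(2)$, the idea is to produce, for each $A\in\I_{1/n}$, a summable sequence $(a_n)$ of positive reals whose $\I$-ideal limit of $(na_n)$ witnesses that $A\in\I$. I would define
\[
a_n=\begin{cases} 1/n & \text{if } n\in A,\\ 1/n^2 & \text{if } n\notin A.\end{cases}
\]
Since $A\in\I_{1/n}$ we have $\sum_{n\in A}1/n<\infty$, and $\sum_{n\notin A}1/n^2<\infty$ trivially, so $\sum a_n<\infty$. Applying (1) with $\varepsilon=1/2$, the set $B=\{n:na_n\geq 1/2\}$ lies in $\I$. But $na_n=1$ for every $n\in A$, so $A\subseteq B\in\I$, and hence $A\in\I$ by closure under subsets.

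Neither direction is technically hard; the only design choice is the construction in the second implication, where one must pick a positive summable sequence on the complement of $A$ so that the whole series converges while $na_n$ remains bounded below by a positive constant precisely on $A$. Using $1/n^2$ off $A$ and $1/n$ on $A$ achieves both goals simultaneously, so this is the ``main'' (mild) obstacle. The hypothesis that all $a_n$ are positive (not merely nonnegative) is used here so that the construction produces a genuine positive sequence.
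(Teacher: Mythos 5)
Your proof is correct, and it is essentially the specialization (with $g(x)=x$, $b_n=1$, $c_n=n$) of the argument the paper gives for its generalizations in Theorems~\ref{thm:general1} and \ref{thm:general2}: the same set $\{n: na_n\geq\varepsilon\}$ for one direction, and the same $1/n$ on $A$, $1/n^2$ off $A$ construction for the other. No issues.
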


\smallskip

For an ideal $\I$ on $\N$, we write $\I^*=\{\N\setminus A: A\in\I\}$ and call it the \emph{filter dual to $\I$}.
For a sequence  $(a_n)$ of reals, we write 
$$\I^*-\lim a_n=L$$ 
if
there exists $F\in \I^*$ such that the subsequence $(a_n)_{n\in F}$ has the limit $L$ i.e.
$$\exists F\in \I^*\, \forall \varepsilon>0\, \exists k \, \forall n\in F\, \left(k\leq n \implies  |a_n - L |<\varepsilon\right).$$
It is known (\cite[Proposition~3.2 and Theorem~3.2]{MR1844385})
that $\I^*-\lim a_n=L$ implies $\I-\lim a_n=L$, whereas  the reversed implication holds if and only if $\I$ is a P-ideal (an ideal $\I$ is a \emph{P-ideal} if for each countable family $\cA\subseteq \I$ there exists $B\in \I^*$ such that $B \cap A$ is finite for each $A\in \cA$).

In Section~\ref{subsec:general1}, we prove some generalizations (see Theorems~\ref{thm:general1} and  \ref{thm:general2}) of the above mentioned theorem of \v{S}al\'{a}t and Toma. Our results encompass other generalizations of Theorem~\ref{thm:salat-toma} considered in the literature (see \cite{MR4317877,MR4317877-Correction} and \cite{MR4327685}). Among others, we show (Corollary~\ref{cor:Salat-Toma}) that $\I$-convergence can be replaced by a much stronger condition of $\I^*$-convergence in Theorem~\ref{thm:salat-toma}. 

The results of Section~\ref{subsec:general1} utilizes summable ideals (defined by Mazur~\cite[p.~105]{MR1124539}) which generalize the summable ideal $\I_{1/n}$ and are defined in the following manner: 
for every divergent series $\sum_{n=1}^\infty d_n=\infty$ with nonegative terms we define a \emph{summable ideal generated by a sequence $(d_n)$} by
$$\I_{(d_n)} = \left\{A\subset\N: \sum_{n\in A}d_n<\infty\right\}.$$
All summable ideals are  P-ideals (see e.g.~\cite[Example~1.2.3(c)]{MR1711328}).

For an ideal $\I$ on $\N$, we write $\I^+=\{ A: A\notin\I\}$ and call it the \emph{coideal of $\I$}.
For a  sequence $(a_n)$ of reals we write 
$$\I^+-\lim a_n=L$$ 
if 
there exists $A\in \I^+$ such that the subsequence $(a_n)_{n\in A}$ has the limit $L$ i.e.
$$\exists A\in \I^+\, \forall \varepsilon>0\, \exists k \, \forall n\in A\, \left(k\leq n \implies  |a_n - L |<\varepsilon\right).$$
Note that $\I^+$-limit of a sequence does not have to be unique. This kind of limit was considered in  \cite{MR2320288}, where the authors proved among others that for a large class of ideals (e.g.~for the summable ideal $\I_{1/n}$) every bounded sequence of reals has $\I^+$-limit.  

Obviously $\I^*-\lim a_n=L$ implies $\I^+-\lim a_n=L$, and the reversed implication holds if and only if $\I$ is a maximal ideal (an ideal $\I$ is maximal if $\I^+=\I^*$). In general, there is no relation between $\I$-convergence and $\I^+$-convergence, but  
$\I^+-\lim a_n=L$ implies $\I-\lim a_n=L$ if and only if $\I$ is a maximal ideal, whereas  
$\I-\lim a_n=L$ implies $\I^+-\lim a_n=L$ if and only if $\I$ is a  weak P-ideal (an ideal $\I$ is a \emph{weak P-ideal} if for each countable family $\cA\subseteq \I$ there exists $B\in \I^+$ such that $B\cap A$ is finite for each $A\in \cA$).

In Section~\ref{subsec:general3}, we prove (see Theorem~\ref{thm:general3}) similar results  as in Section~\ref{subsec:general1}, but with the aid of $\I^+$-convergence which is independent of $\I$-convergence in general.

We say that a sequence $(a_n)$ of reals  is \emph{$\I^*$-nonincreasing} if there exists $F\in \I^*$ such that the subsequence $(a_n)_{n\in F}$ is nonincreasing i.e.
$$\exists F\in \I^*\, \forall n,k\in F\, \left(n\leq k \implies  a_n\geq a_k\right).$$

In \cite{MR3576795}, the authors have been weakening  the assumption on monotonicity in Olivier's theorem instead  of dropping it entirely. Among others, they posted the following  problem.
\begin{problem}[{Faisant-Grekos-Mi\v{s}\'{i}k~\cite{MR3576795}}]
Characterize ideals $\I$ with the property that for every $\I^*$-nonincreasing sequence $(a_n)$ of positive reals we have
$$\sum_{n=1}^\infty a_n<\infty \implies \I^*-\lim na_n =0.$$ 
\end{problem}
One can see that Olivier's theorem (Theorem~\ref{thm:Olivier}) says that the ideal $\I=\fin$ has this  property.
On the other hand, in \cite{MR3576795}, the authors construct an ideal $\I$ without this property.

In Section~\ref{sec:general4-monotone}, we solve the above problem by providing  (Theorems~\ref{thm:star-ideal}
and \ref{thm:star-plus}) some characterizations of the above mentioned property and other properties of similar flavour.

In all of the above  mentioned results  there is an assumption that the considered series has positive terms, but we can weaken this assumption to consider also absolutely convergent series with arbitrary terms (as $\I-\lim n|a_n| =0$ implies $\I-\lim na_n=0$, and similarly for other types of convergences).
However, the alternating harmonic series 
$\sum_{n=1}^\infty (-1)^n/n$ 
shows that both Olivier's theorem and Theorem~\ref{thm:salat-toma} fail (as the sequence $(-1)^n$ is not $\I$-convergent to zero for any ideal $\I$) if we allow the series to be conditionally convergent in these theorems.
On the other hand, it is known (Kronecker \cite{Kronecker}, see also \cite[Theorem~82.3, p.~129]{Knopp}) that a version of Olivier's theorem for arbitrary series holds if one replaces ordinary convergence by Ces\'{a}ro convergence (also known as Ces\'{a}ro summation or the Ces\'{a}ro mean).

\begin{theorem}[Kronecker \cite{Kronecker}]
If a series $ \sum_{n=1}^\infty a_n$ is convergent, then the sequence $(na_n)$ is Ces\'{a}ro convergent to zero i.e. 
$$\lim_{n\to\infty} \frac{a_1+2a_2+\dots+na_n}{n}=0.$$ 
\end{theorem}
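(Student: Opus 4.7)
The plan is to apply Abel summation (summation by parts) to rewrite the Ces\'{a}ro mean of the sequence $(na_n)$ in terms of the partial sums of the original series, and then invoke the classical Ces\'{a}ro-mean theorem for convergent sequences. No ideal-theoretic machinery is needed here, because the statement concerns only ordinary limits.

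First, set $s_0=0$ and $s_n=a_1+a_2+\dots+a_n$ for $n\geq 1$, so that $a_k=s_k-s_{k-1}$. By assumption $s_n$ converges to some finite number $s$. Substituting this expression and reindexing the telescoping part should give
$$\sum_{k=1}^n k a_k \;=\; \sum_{k=1}^n k(s_k-s_{k-1}) \;=\; ns_n - \sum_{k=1}^{n-1} s_k,$$
which is just the discrete analogue of integration by parts.

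Dividing by $n$ yields
$$\frac{a_1+2a_2+\dots+na_n}{n} \;=\; s_n \;-\; \frac{n-1}{n}\cdot\frac{1}{n-1}\sum_{k=1}^{n-1} s_k.$$
Since $s_n\to s$, the classical Ces\'{a}ro theorem for convergent sequences gives $\frac{1}{n-1}\sum_{k=1}^{n-1} s_k\to s$ as well, and $(n-1)/n\to 1$, so the right-hand side tends to $s-s=0$, as required.

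The only place where one has to be careful is the index bookkeeping in the Abel summation, in particular the shift $\sum_{k=1}^n k s_{k-1}=\sum_{j=0}^{n-1}(j+1)s_j$, which (using $s_0=0$) produces the $ns_n-\sum_{k=1}^{n-1}s_k$ form above. Everything after that is routine, and the main content of the theorem is really the reduction of a statement about $(na_n)$ to the Ces\'{a}ro behaviour of the partial sums $(s_n)$, which are themselves convergent by hypothesis.
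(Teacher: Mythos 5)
Your proof is correct: the summation-by-parts identity $\sum_{k=1}^n k a_k = n s_n - \sum_{k=1}^{n-1} s_k$ is right (the index shift using $s_0=0$ checks out), and combining $s_n\to s$ with the classical Ces\'{a}ro-mean theorem applied to the partial sums indeed gives the limit $s-s=0$. The paper itself gives no proof of this statement --- it is quoted as a classical result with a reference to Kronecker and to Knopp's book --- and your argument is exactly the standard Abel-summation proof of Kronecker's lemma, so there is nothing to reconcile.
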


\bigskip

For more than a decade, the research on finding linear subsets of nonlinear sets in vector spaces  (the trend nowadays known as \emph{lineability} or \emph{spaceability}) is gathering more and more mathematicians. Below we provide the notions we will use in the last part of our paper (for more on the subject see e.g.~the  book \cite{MR3445906} or the survey~\cite{MR3119823}).  

Let  $X$ be a  Banach algebra and    $\kappa$ be a cardinal number.
We say that a subset $Y\subseteq X$ is 
\begin{enumerate}
    \item 
 \emph{$\kappa$-lineable} if $Y\cup\{0\}$ contains a vector subspace of dimension $\kappa$;
 
 %(i.e.~$Y$ contains a vector subspace spanned by a linearly independent set with cardinality $\kappa$);

\item 
 \emph{$\kappa$-spaceable}  if $Y\cup\{0\}$ contains a closed vector subspace of dimension $\kappa$;

\item 
 \emph{$\kappa$-algebrable} if $Y\cup\{0\}$ contains a $\kappa$-generated subalgebra;
 
 %(i.e.~$Y$ contains a subalgebra generated by an algebraically independent set with cardinality  $\kappa$);

\item 
 \emph{strongly $\kappa$-algebrable} if $Y\cup\{0\}$ 
contains a $\kappa$-generated subalgebra which is a free algebra;

\item \emph{lineable} if it is $\kappa$-lineable for some  infinite $\kappa$ (and similarly for other notions defined above).
\end{enumerate}

In \cite{MR4317877}, the authors consider the Banach algebra $\ell_1$ (of all real sequences $(a_n)$ with absolutely convergent series $\sum_{n=1}^\infty a_n$ equipped with the norm  $\lVert (a_n)\rVert = \sum_{n=1}^\infty |a_n|$ and coordinatewise addition and multiplication) and examine the lineability-like notions of the set of those sequences for which the assertion of Olivier's theorem fails, namely they examine the set: 
$$AOS = \{(a_n)\in \ell_1:(na_n) \text{ is not convergent to zero}\}.$$
Among others, they proved that $AOS$ is strongly $\continuum$-algebrable, $\continuum$-lineable and spaceable.

In Section~\ref{sec:algebraic-structures}, we examine the lineability-like notions of the following  sets: 
\begin{equation*}
    \begin{split}
AOS(\I) & = \{(a_n)\in \ell_1:(na_n) \text{ is not $\I$-convergent to zero}\}, \\
AOS(\I^*) & = \{(a_n)\in \ell_1:(na_n) \text{ is not $\I^*$-convergent to zero}\}, \\
AOS(\I^+) & = \{(a_n)\in \ell_1:(na_n) \text{ is not $\I^+$-convergent to zero}\}
    \end{split}
\end{equation*}
for an arbitrary ideal $\I$ on $\N$.

Since $\I^*$-convergence implies both $\I$-convergence and $\I^+$-convergence,  $AOS(\I)\subseteq AOS(\I^*)$ and $AOS(\I^+)\subseteq AOS(\I^*)$. However,  in general,  these inclusions do not reverse (see Proposition~\ref{prop:relations-between-AOSs}).

In Section~\ref{subsec:lineability}, we prove that a  necessary and sufficient condition for $AOS(\I)$, $AOS(\I^*)$ and $AOS(\I^+)$ to be  $\continuum$-lineable is that these families are nonempty (see Theorem~\ref{the:lineability}).

In Section~\ref{subsec:spaceability}, we describe some classes of ideals for which 
a  necessary and sufficient condition for $AOS(\I)$ and $AOS(\I^*)$  to be  spaceable is that these families are nonempty (see
Theorems~\ref{thm:spaceability-1} and \ref{thm:spaceability-2}).
An example of such a class of ideals is the family of all Borel ideals (see Theorem~\ref{thm:spaceability-1-Borel}).
However, we do not know if this condition works for every ideal (see Question~\ref{q:spaceable-iff}). 
Moreover, we do not know any conditions under which $AOS(\I^+)$ is spaceable (see Question~\ref{q:nontrivial-non-spaceable-ideal}).

In Section~\ref{subsec:algebrability}, we describe some classes of ideals for which $AOS(\I)$, $AOS(\I^*)$ and $AOS(\I^+)$  are strong $\continuum$-algebrable (see Theorem~\ref{thm:strong-algebrability}).
%However, in this case the necessary condition is not the sufficient one (see Proposition~\ref{prop:necessary-not-sufficient-algebra}).

%%%%%%%%%%%%%%%%%%%%%%%%%%%%%%%%%%%%%%%%%%%%%%%%%%%%%%%%%%%%%%%%%%%%%%%%%

\section{The ideal test for the divergence of an infinite series}

%%%%%%%%%%%%%%%%%%%%%%%%%%%%%%%%%%%%%%%%%%%%%%%%%%%%%%%%%%%%%%%%%%%%%%%%%

\subsection{$\I$ and $\I^*$ tests}
\label{subsec:general1}

\begin{theorem}\label{thm:general1}
Let $\I$ be an ideal on $\N$.
Let $g:(0,\infty)\to(0,\infty)$ be a strictly increasing function such that
$$\lim_{x\to 0^+} \frac{g(x)}{x^\gamma}=M$$
 for some positive constants  $\gamma,M\in(0,\infty)$.
Let   $(b_n)$ and  $(c_n)$ be sequences of positive reals such that 
$$\lim_{n\to\infty} c_n=\infty.$$
Then the following conditions are equivalent.
\begin{enumerate}
    \item For every sequence $(a_n)$ with  $a_n\in\ran(g)$ we have \label{thm:general1:I-star-convergence}
$$ \sum_{n=1}^{\infty} b_n a_n<\infty \implies   \I^*-\lim c_n  g^{-1}(a_n)=0. $$

\item For every sequence $(a_n)$ with  $a_n\in\ran(g)$ we have\label{thm:general1:I-convergence}
$$ \sum_{n=1}^{\infty} b_n  a_n<\infty \implies  \I-\lim c_n g^{-1}(a_n)=0.$$

\item 
The ideal $\I$ extends the summable ideal generated by the sequence $(b_n g(1/c_n))$:
$$ \I_{\left(b_n g(1/c_n)\right)} \subseteq \I.$$\label{thm:general1:ideals}
\end{enumerate}
\end{theorem}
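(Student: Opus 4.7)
The plan is to prove the cycle (\ref{thm:general1:I-star-convergence})$\Rightarrow$(\ref{thm:general1:I-convergence})$\Rightarrow$(\ref{thm:general1:ideals})$\Rightarrow$(\ref{thm:general1:I-star-convergence}); write $\J := \I_{(b_n g(1/c_n))}$ for brevity. The first implication is free, since $\I^*$-convergence always entails $\I$-convergence.

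For (\ref{thm:general1:I-convergence})$\Rightarrow$(\ref{thm:general1:ideals}), I would fix $A\in\J$ and manufacture a single series that forces $A\in\I$. Because $g(x)\sim Mx^\gamma$, in particular $g(x)\to 0$ as $x\to 0^+$, so one can choose $a_n\in\ran(g)$ with $b_na_n\le 2^{-n}$ for $n\notin A$, and set $a_n:=g(1/c_n)$ for $n\in A$. Then $\sum_n b_na_n\le 1+\sum_{n\in A}b_ng(1/c_n)<\infty$, while $c_ng^{-1}(a_n)=1$ for every $n\in A$. Applying (\ref{thm:general1:I-convergence}) to $(a_n)$ gives $\{n:c_ng^{-1}(a_n)\ge 1/2\}\in\I$, and this set contains $A$, so $A\in\I$.

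The real content is in (\ref{thm:general1:ideals})$\Rightarrow$(\ref{thm:general1:I-star-convergence}). Fix $(a_n)$ with $a_n\in\ran(g)$ and $\sum_n b_na_n<\infty$, and for $\varepsilon>0$ put $A_\varepsilon:=\{n:c_ng^{-1}(a_n)\ge\varepsilon\}$. I would first show $A_\varepsilon\in\J$. Since $g$ is increasing, $n\in A_\varepsilon$ yields $a_n\ge g(\varepsilon/c_n)$; since $c_n\to\infty$, the hypothesis $g(x)/x^\gamma\to M>0$ gives
$$\lim_{n\to\infty}\frac{g(\varepsilon/c_n)}{g(1/c_n)}=\varepsilon^\gamma>0,$$
so $b_n g(1/c_n)\le (2/\varepsilon^\gamma)\, b_n a_n$ for all but finitely many $n\in A_\varepsilon$, whence $\sum_{n\in A_\varepsilon} b_n g(1/c_n)<\infty$, i.e.\ $A_\varepsilon\in\J\subseteq\I$. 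This already delivers the $\I$-convergence part of (\ref{thm:general1:I-convergence}); this asymptotic comparison is the main technical step of the whole theorem.

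To upgrade to $\I^*$-convergence I would invoke the fact (recalled in the introduction) that every summable ideal is a P-ideal. Applied to the countable family $\{A_{1/k}:k\in\N\}\subseteq\J$, this produces $F\in\J^*$ with $F\cap A_{1/k}$ finite for every $k$. The inclusion $\J\subseteq\I$ gives $\J^*\subseteq\I^*$, so $F\in\I^*$; and since $A_\varepsilon\subseteq A_{1/k}$ whenever $1/k\le\varepsilon$, each $F\cap A_\varepsilon$ is finite. This is exactly the assertion $\I^*\text{-}\lim c_n g^{-1}(a_n)=0$, closing the cycle.
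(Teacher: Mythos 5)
Your proposal is correct and follows essentially the same route as the paper: the same witness sequence for (\ref{thm:general1:I-convergence})$\Rightarrow$(\ref{thm:general1:ideals}) (stated directly rather than by contradiction, with $2^{-n}$ in place of the paper's $1/(n^2 b_n)$ tail), the same asymptotic comparison $g(\varepsilon/c_n)/g(1/c_n)\to\varepsilon^{\gamma}$ to place $A_\varepsilon$ in $\I_{(b_n g(1/c_n))}$, and the same use of the P-ideal property of summable ideals to upgrade to $\I^*$-convergence.
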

\begin{proof}

$(\ref{thm:general1:I-star-convergence})\implies(\ref{thm:general1:I-convergence})$
It follows from the fact that $\I^*$-convergence implies $\I$-convergence.

\smallskip

$(\ref{thm:general1:I-convergence})\implies(\ref{thm:general1:ideals})$
Suppose that there exists $A\in \I_{\left(b_n  g(1/c_n)\right)}\setminus\I$. For any $n\in\N$, take as $d_n$ some element of $\ran(g)$ with $d_n\leq \frac{1}{n^2 b_n}$. We can find such an element by the assumption $\lim_{x\to 0^+} g(x)=0$.
We define
$$a_n=\left\{\begin{array}{ll}
g(1/c_n) & \textrm{ for } n\in A,\\
d_n & \textrm{ for other } n. \end{array}\right. $$

We can see that $\sum_{n=1}^{\infty} b_n a_n<\infty$. Indeed, observe that
$$\sum_{n=1}^{\infty} b_n a_n\leq \sum_{n\in A}b_n g(1/c_n) + \sum_{n\not\in A} \frac{b_n}{n^2 b_n}.$$
Since $A\in \I_{\left(b_n g(1/c_n)\right)}$, the first sum is finite and the second sum is finite because the series $\sum \frac{1}{n^2}$ is convergent. 

It follows from the assumption that  $\I-\lim c_n g^{-1}(a_n)=0$. On the other hand, for every $n\in A$ we have 
$$ c_n g^{-1}(a_n)=c_n  g^{-1}\left( g(1/c_n) \right)=c_n (1/c_n)=1.$$
Since $A\not\in\I$, we obtain $\I-\lim c_n g^{-1}(a_n)\not=0$, a contradiction.

\smallskip

$(\ref{thm:general1:ideals})\implies(\ref{thm:general1:I-convergence})$
 Let $\sum_{n=1}^{\infty}b_n a_n<\infty$. We need to show that  $\I-\lim c_n g^{-1}(a_n)=0$.
Take $\varepsilon>0$. We consider the set 
$$A=\{n\in\N:c_n g^{-1}(a_n)\geq\varepsilon \}=\{n\in\N: a_n\geq g(\varepsilon/c_n) \}=\{n\in\N: b_n a_n\geq b_n g(\varepsilon/c_n) \}. $$ 

In order to prove that $A\in\I$, we only need to show that $\sum_{n\in A} b_n g(1/c_n)<\infty$.
First we notice that $\sum_{n\in A} b_n g(\varepsilon/c_n)<\infty$ since 
$$\sum_{n\in A} b_n g(\varepsilon/c_n)\leq \sum_{n\in A}b_n a_n\leq \sum_{n=1}^{\infty}b_n a_n<\infty.$$
Now, we would like to prove that convergence of the series $\sum_{n\in A} b_n g(\varepsilon/c_n)$ implies convergence of the series $\sum_{n\in A} b_n g(1/c_n)$.

Notice that since $\lim_{n\to\infty} c_n=\infty$, we have $\lim_{n\to\infty} 1/c_n=0$. Therefore 
$$\lim_{n\to\infty} \frac{g(1/c_n)}{(1/c_n)^\gamma}=M \textrm { and } \lim_{n\to\infty} \frac{g(\varepsilon/c_n)}{(\varepsilon/c_n)^\gamma}=M. $$
It follows that 
$$\lim_{n\to\infty} \frac{g(1/c_n)}{g(\varepsilon/c_n)}\cdot \frac{(\varepsilon/c_n)^\gamma}{(1/c_n)^\gamma}=1,$$ thus
$$\lim_{n\to\infty} \frac{g(1/c_n)}{g(\varepsilon/c_n)}=\varepsilon^{-\gamma}\in(0,\infty). $$
Because of that, $\sum_{n\in A} b_n  g(\varepsilon/c_n)<\infty$ is equivalent to  $\sum_{n\in A} b_n g(1/c_n)<\infty$.

\smallskip

$(\ref{thm:general1:ideals})\implies(\ref{thm:general1:I-star-convergence})$
Since
  $(\ref{thm:general1:ideals})\implies(\ref{thm:general1:I-convergence})$
  and $\I_{\left(b_n g(1/c_n)\right)} \subseteq \I_{\left(b_n g(1/c_n)\right)}$, 
the convergence of the series $\sum_{n=1}^{\infty} b_n a_n$ implies
$\I_{\left(b_n  g(1/c_n)\right)}-\lim  c_n  g^{-1}(a_n)=0$. 
Since $\I_{\left(b_n g(1/c_n)\right)}$ is a P-ideal, we obtain $\I_{\left(b_n  g(1/c_n)\right)}^*-\lim  c_n  g^{-1}(a_n)=0$. By the assumption  $\I_{\left(b_n g(1/c_n)\right)}\subseteq\I$,  we have $\I_{\left(b_n g(1/c_n)\right)}^*\subseteq\I^*$, thus $\I^*-\lim c_n g^{-1}(a_n)=0$.
\end{proof}

\begin{theorem}\label{thm:general2}
Let $\I$ be an ideal on $\N$.
Let $g:(0,\infty)\to(0,\infty)$ be a strictly increasing function such that
$$\lim_{x\to 0^+} g(x)=0 \text{\ \ and \ \ } 
\forall \varepsilon>0 \, \exists M \, \forall x>0\, \left(\frac{g(x)}{g(\varepsilon x)}\leq M\right).$$
Let   $(b_n)$ and  $(c_n)$ be sequences of positive reals.
Then the following conditions are equivalent.
\begin{enumerate}
\item For every sequence $(a_n)$ with  $a_n\in\ran(g)$ we have\label{thm:general2:I-star-convergence}
$$\sum_{n=1}^{\infty} b_n a_n<\infty \implies   \I^*-\lim c_n g^{-1}(a_n)=0.  $$

\item For every sequence $(a_n)$ with  $a_n\in\ran(g)$ we have\label{thm:general2:I-convergence}
$$ \sum_{n=1}^{\infty} b_n a_n<\infty \implies   \I-\lim c_n g^{-1}(a_n)=0.$$    

\item 
The ideal $\I$ extends the summable ideal generated by the sequence $(b_n g(1/c_n))$:
$$ \I_{\left(b_n g(1/c_n)\right)} \subseteq \I.$$\label{thm:general2:ideals}

\end{enumerate}
\end{theorem}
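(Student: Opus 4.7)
The plan is to mirror the structure of the proof of Theorem~\ref{thm:general1}, modifying only the step where the asymptotic behaviour $g(x)\sim Mx^\gamma$ was invoked. The implications $(\ref{thm:general2:I-star-convergence})\Rightarrow(\ref{thm:general2:I-convergence})$ and $(\ref{thm:general2:ideals})\Rightarrow(\ref{thm:general2:I-star-convergence})$ require no change. The first is the general fact that $\I^*$-convergence implies $\I$-convergence. The second is obtained exactly as in the previous theorem: applying $(\ref{thm:general2:ideals})\Rightarrow(\ref{thm:general2:I-convergence})$ with $\I$ replaced by $\I_{(b_n g(1/c_n))}$ gives $\I_{(b_n g(1/c_n))}$-convergence of $c_n g^{-1}(a_n)$ to zero, which upgrades to $\I_{(b_n g(1/c_n))}^*$-convergence because summable ideals are P-ideals, and finally transfers to $\I^*$ via the inclusion $\I_{(b_n g(1/c_n))}^*\subseteq\I^*$ provided by $(\ref{thm:general2:ideals})$.

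For $(\ref{thm:general2:I-convergence})\Rightarrow(\ref{thm:general2:ideals})$, I would repeat the construction from the previous proof essentially verbatim. Assume for contradiction $A\in\I_{(b_n g(1/c_n))}\setminus\I$. Using the hypothesis $\lim_{x\to 0^+}g(x)=0$, for each $n$ pick $d_n\in\ran(g)$ with $d_n\leq 1/(n^2 b_n)$, and define $a_n=g(1/c_n)$ for $n\in A$ and $a_n=d_n$ otherwise. Then $\sum b_n a_n<\infty$ because the $A$-part is controlled by $A\in\I_{(b_n g(1/c_n))}$ and the complement part by $\sum 1/n^2$. But $c_n g^{-1}(a_n)=1$ for all $n\in A\notin\I$, contradicting $(\ref{thm:general2:I-convergence})$. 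Note that the assumption $\lim c_n=\infty$ is not used anywhere here, so this step survives its removal without modification.

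The only genuinely new step is $(\ref{thm:general2:ideals})\Rightarrow(\ref{thm:general2:I-convergence})$. Fix $\varepsilon>0$ and consider
$$A=\{n\in\N: c_n g^{-1}(a_n)\geq \varepsilon\}=\{n\in\N: a_n\geq g(\varepsilon/c_n)\},$$
so that $\sum b_n a_n<\infty$ immediately yields $\sum_{n\in A} b_n g(\varepsilon/c_n)<\infty$. The goal is to pass from this to $\sum_{n\in A} b_n g(1/c_n)<\infty$, which forces $A\in\I_{(b_n g(1/c_n))}\subseteq\I$. This is where the new hypothesis enters in a clean way: for the chosen $\varepsilon$ it provides a constant $M$ with $g(x)\leq M\cdot g(\varepsilon x)$ for every $x>0$, and taking $x=1/c_n$ gives $g(1/c_n)\leq M\cdot g(\varepsilon/c_n)$ termwise, hence $\sum_{n\in A} b_n g(1/c_n)\leq M\sum_{n\in A} b_n g(\varepsilon/c_n)<\infty$.

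There is no real obstacle, only one conceptual point worth stressing: in Theorem~\ref{thm:general1} the hypothesis $\lim c_n=\infty$ forced $1/c_n\to 0^+$, and the asymptotic $g(x)\sim Mx^\gamma$ near zero was enough to compare $g(1/c_n)$ with $g(\varepsilon/c_n)$ in the limit. Here $\lim c_n=\infty$ is dropped, so asymptotics at zero are no longer available; the uniform bound $g(x)/g(\varepsilon x)\leq M$ on all of $(0,\infty)$ is precisely the replacement that makes the same comparison work directly, without any limit argument.
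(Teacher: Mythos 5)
Your proposal is correct and follows essentially the same route as the paper's proof: the implications $(1)\Rightarrow(2)$, $(2)\Rightarrow(3)$ and $(3)\Rightarrow(1)$ are carried over verbatim from Theorem~\ref{thm:general1}, and in $(3)\Rightarrow(2)$ the paper likewise replaces the limit comparison of $g(1/c_n)$ with $g(\varepsilon/c_n)$ by the uniform bound $g(1/c_n)\leq M\, g(\varepsilon/c_n)$ supplied by the new hypothesis. Nothing is missing.
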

\begin{proof}
$(\ref{thm:general2:I-star-convergence})\implies(\ref{thm:general2:I-convergence})$
It follows from the fact that $\I^*$-convergence implies $\I$-convergence.

\smallskip

$(\ref{thm:general2:I-convergence})\implies (\ref{thm:general2:ideals})$ 
Suppose that there exists $A\in \I_{\left(b_n g(1/c_n)\right)}\setminus\I$. For any $n\in\N$, take as $d_n$ some element of $\ran(g)$ with $d_n\leq \frac{1}{n^2 b_n}$. We can find such an element because  $\lim_{x\to 0^+} g(x)=0$.
 We define
$$a_n=\left\{\begin{array}{ll}
g(1/c_n) & \textrm{ for } n\in A,\\
d_n & \textrm{ for other } n. \end{array}\right. $$

We can see that $\sum_{n=1}^{\infty} b_n a_n<\infty$. Indeed, observe that
$$\sum_{n=1}^{\infty} b_n a_n\leq\sum_{n\in A}b_n g(1/c_n) + \sum_{n\not\in A} \frac{b_n}{n^2 b_n}.$$
Since $A\in \I_{\left(b_n g(1/c_n)\right)}$, the first sum is finite and the second sum is finite because the series $\sum \frac{1}{n^2}$ is convergent. 

It follows from the assumption that  $\I-\lim c_n  g^{-1}(a_n)=0$. On the other hand, for every $n\in A$ we have 
$$ c_n  g^{-1}(a_n)=c_n g^{-1}\left( g(1/c_n) \right)=c_n (1/c_n)=1.$$
Since $A\not\in\I$, we obtain $\I-\lim c_n g^{-1}(a_n)\not=0$, a contradiction.

\smallskip

$(\ref{thm:general2:ideals})\implies (\ref{thm:general2:I-convergence})$ 
Let $\sum_{n=1}^{\infty}b_n a_n<\infty$. We need to show that  $\I-\lim c_n g^{-1}(a_n)=0$.
Take $\varepsilon>0$. We consider the set 
$$A=\{n\in\N:c_n g^{-1}(a_n)\geq\varepsilon \}=\{n\in\N: a_n\geq g(\varepsilon/c_n) \}=\{n\in\N: b_n a_n\geq b_n  g(\varepsilon/c_n) \}. $$ 

In order to prove that $A\in\I$, we only need to show that $\sum_{n\in A} b_n g(1/c_n)<\infty$.
First we notice that $\sum_{n\in A} b_n g(\varepsilon/c_n)<\infty$ since 
$$\sum_{n\in A} b_n g(\varepsilon/c_n)\leq \sum_{n\in A}b_n a_n\leq \sum_{n=1}^{\infty}b_n a_n<\infty.$$
Now, observe that there exists $M$ such that for all $n\in\N$ we have 
$$\frac{g(1/c_n)}{g(\varepsilon/c_n)}\leq M, $$
thus 
$$\sum_{n\in A} b_n g(1/c_n)\leq M \sum_{n\in A} b_n g(\varepsilon/c_n)<\infty.$$

\smallskip

$(\ref{thm:general2:ideals})\implies(\ref{thm:general2:I-star-convergence})$
Since
  $(\ref{thm:general2:ideals})\implies(\ref{thm:general2:I-convergence})$
  and $\I_{\left(b_n g(1/c_n)\right)} \subseteq \I_{\left(b_n g(1/c_n)\right)}$, 
the convergence of the series $\sum_{n=1}^{\infty} b_n a_n$ implies
$\I_{\left(b_n g(1/c_n)\right)}-\lim  c_n g^{-1}(a_n)=0$. 
Since $\I_{\left(b_n g(1/c_n)\right)}$ is a P-ideal, we obtain $\I_{\left(b_n g(1/c_n)\right)}^*-\lim  c_n g^{-1}(a_n)=0$. By the assumption  $\I_{\left(b_n g(1/c_n)\right)}\subseteq\I$, we have $\I_{\left(b_n g(1/c_n)\right)}^*\subseteq\I^*$, thus $\I^*-\lim c_n g^{-1}(a_n)=0$.
\end{proof}

\begin{remark}\
\label{non-negative}
\begin{enumerate}
    \item 
Notice that both Theorems~\ref{thm:general1} and \ref{thm:general2} would still be true if we add $0$ to the domain and codomain of $g$ and require that $g(0)=0$. There is even no need to change any of the proofs, and then we can strengtheen these theorems by requiring the sequence  $(a_n)$ to be non-negative instead of positive.
\label{non-negative:non-negative-instead-positive} 

\item 
Note that Theorem~\ref{thm:general2}
does not imply Theorem~\ref{thm:general1}.
Indeed, the function $g(x)=e^x-1$ satisfies the assumption of Theorem~\ref{thm:general1} (as, using l'Hospital's rule, we have $\lim_{x\to 0^+}\frac{g(x)}{x}=1$, so $\gamma=1$ and $M=1$ work), but it does not satisfy the assumption of Theorem~\ref{thm:general2} (as $\lim_{x\to\infty}\frac{g(x)}{g(x/2)}=\infty$, so if $\varepsilon=1/2$, then for every $M>0$ one can find $x>0$ such that $\frac{g(x)}{g(x/2)}>M$).

 \item 
 On the other hand, Theorem~\ref{thm:general2} works for any sequences $(c_n)$ whereas Theorem~\ref{thm:general1} works only for sequences $(c_n)$ which are divergent to infinity.

\item If a  function $g(x)$ satisfies the assumptions of Theorem~\ref{thm:general1}, it also satisfies $\lim_{x\to 0^+}g(x)=0$. On the other hand, if $g(x) = e^{-1/x}$, then $\lim_{x\to 0^+} g(x) = 0$, but $g(x)$ does not satisfy the assumption of Theorem~\ref{thm:general1}. 
Moreover, the equivalence from Theorem~\ref{thm:general1} does not hold for the function $g(x)=e^{-1/x}$ as it is witnessed by  sequences 
$a_n=1/n^2$, $b_n=1$ and $c_n=\ln n$.
\end{enumerate}
\end{remark}

\begin{corollary}
If $g$, $(b_n)$ and $(c_n)$ are like in Theorem~\ref{thm:general1} or Theorem~\ref{thm:general2}, then
for every sequence $(a_n)$ with  $a_n\in\ran(g)$ we have
$$\sum_{n=1}^{\infty} b_n a_n<\infty \implies   \I_{\left(b_n g(1/c_n)\right)}^*-\lim c_n g^{-1}(a_n)=0  $$
and
$$ \sum_{n=1}^{\infty} b_n a_n<\infty \implies   \I_{\left(b_n g(1/c_n)\right)}-\lim c_n g^{-1}(a_n)=0.$$    
\end{corollary}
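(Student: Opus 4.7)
The plan is to apply Theorem~\ref{thm:general1} (respectively Theorem~\ref{thm:general2}) with the ideal $\I$ itself chosen to be the summable ideal $\I_{(b_n g(1/c_n))}$. Under either theorem's hypotheses on $g$, $(b_n)$, $(c_n)$, this is the unique choice that makes condition~(\ref{thm:general1:ideals}) self-evident, because $\I_{(b_n g(1/c_n))}\subseteq\I_{(b_n g(1/c_n))}$ is a tautology.

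Concretely, first I would observe that if $\sum_{n=1}^\infty b_n g(1/c_n)=\infty$, then $\I_{(b_n g(1/c_n))}$ is a genuine ideal on $\N$, so Theorem~\ref{thm:general1} or Theorem~\ref{thm:general2} applies. Taking $\I=\I_{(b_n g(1/c_n))}$, the already established implications $(\ref{thm:general1:ideals})\Rightarrow(\ref{thm:general1:I-convergence})$ and $(\ref{thm:general1:ideals})\Rightarrow(\ref{thm:general1:I-star-convergence})$ (and the analogous ones in Theorem~\ref{thm:general2}) immediately yield both conclusions of the corollary, since with this choice of $\I$ the symbols $\I$ and $\I^*$ in the theorems become precisely $\I_{(b_n g(1/c_n))}$ and $\I_{(b_n g(1/c_n))}^*$ in the corollary.

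In the remaining case $\sum_{n=1}^\infty b_n g(1/c_n)<\infty$, the family $\I_{(b_n g(1/c_n))}$ equals $\cP(\N)$ and is not an ideal in the sense of the paper, so the statement should be read as vacuous; if one still wishes to record content here, one can invoke the proof of $(\ref{thm:general1:ideals})\Rightarrow(\ref{thm:general1:I-star-convergence})$ verbatim, which uses only that $\I_{(b_n g(1/c_n))}$ is a P-ideal-like summable family and gives $\I_{(b_n g(1/c_n))}$-convergence and then passes to $\I_{(b_n g(1/c_n))}^*$-convergence via the P-ideal argument employed in the theorems.

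There is essentially no obstacle: the corollary is a direct specialization of the main theorems, and the work has already been done in proving those. The only thing to keep in mind is that the P-ideal step at the end of $(\ref{thm:general1:ideals})\Rightarrow(\ref{thm:general1:I-star-convergence})$ is precisely what upgrades $\I$-convergence to $\I^*$-convergence, which is why the corollary can assert the stronger $\I_{(b_n g(1/c_n))}^*$-limit statement and not merely the $\I_{(b_n g(1/c_n))}$-limit one.
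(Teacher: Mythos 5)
Your proposal is correct and is exactly the paper's own argument: apply Theorem~\ref{thm:general1} or Theorem~\ref{thm:general2} with $\I=\I_{(b_n g(1/c_n))}$, for which condition~(\ref{thm:general1:ideals}) holds tautologically. Your extra remark about the degenerate case $\sum_n b_n g(1/c_n)<\infty$ (where the family is not an ideal in the paper's sense) is a reasonable point the paper passes over silently, but it does not change the substance.
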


\begin{proof}
Apply Theorem~\ref{thm:general1} or Theorem~\ref{thm:general2} with the ideal $\I =    \I_{\left(b_n  g(1/c_n)\right)}$.
\end{proof}

The equivalence  ``$(\ref{cor:Salat-Toma:I-conv}) \iff (\ref{cor:Salat-Toma:extension})$'' in the following result is just Theorem~\ref{thm:salat-toma} and it was proved in \cite{MR2031274}. Here, we strengthen  this theorem essentially, because $\I^*$-convergence is stronger than $\I$-convergence.

\begin{corollary}
\label{cor:Salat-Toma}
Let $\I$ be an ideal on $\N$. The following conditions are equivalent. 
\begin{enumerate}
    \item For every  sequence $(a_n)$ of non-negative numbers we have\label{cor:Salat-Toma:I-conv} 
$$\sum_{n=1}^\infty a_n<\infty \implies \I-\lim n a_n=0.$$
    \item For every  sequence $(a_n)$ of non-negative numbers we have\label{cor:Salat-Toma:Istar-conv} 
$$\sum_{n=1}^\infty a_n<\infty \implies \I^*-\lim n a_n=0.$$
\item The ideal $\I$ extends the summable ideal:\label{cor:Salat-Toma:extension} 
$\I_{1/n}\subseteq  \I.$
\end{enumerate}
\end{corollary}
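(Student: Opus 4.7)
The plan is to deduce this corollary by directly specializing Theorem~\ref{thm:general1}. I would take $g(x) = x$, $b_n = 1$, and $c_n = n$. Under these choices the hypotheses of Theorem~\ref{thm:general1} are immediate: the identity map is strictly increasing, $\lim_{x \to 0^+} g(x)/x^{\gamma} = 1$ with $\gamma = M = 1$, and $c_n = n \to \infty$.

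Next I would match up the three conditions. Since $g^{-1}$ is the identity, the expression $c_n g^{-1}(a_n)$ reduces to $n a_n$, and the summability hypothesis $\sum b_n a_n < \infty$ becomes the plain convergence $\sum a_n < \infty$. The generating sequence of the relevant summable ideal is $b_n g(1/c_n) = 1/n$, so $\I_{(b_n g(1/c_n))} = \I_{1/n}$. With these identifications, parts~(\ref{thm:general1:I-convergence}), (\ref{thm:general1:I-star-convergence}), and (\ref{thm:general1:ideals}) of Theorem~\ref{thm:general1} translate verbatim into parts~(\ref{cor:Salat-Toma:I-conv}), (\ref{cor:Salat-Toma:Istar-conv}), and (\ref{cor:Salat-Toma:extension}) of the corollary.

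The only subtlety is that Theorem~\ref{thm:general1} is stated for sequences $(a_n)$ of strictly positive terms, whereas the corollary asserts the same for non-negative sequences. To close this gap I would invoke Remark~\ref{non-negative}(\ref{non-negative:non-negative-instead-positive}), which observes that extending the domain and codomain of $g$ by setting $g(0) = 0$ leaves the proof intact and allows the conclusion to be stated for non-negative sequences. Since our $g(x) = x$ extends naturally by $g(0) = 0$, this step is automatic.

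I do not expect any real obstacle: the corollary is essentially a direct instantiation, and the only bookkeeping is checking that the specialized parameters meet the hypotheses and that the extension to non-negative terms from Remark~\ref{non-negative} applies. The conceptual novelty contained in this corollary—namely the upgrade from $\I$-convergence to $\I^*$-convergence in the Šalát–Toma theorem—already lives in the implication $(\ref{thm:general1:ideals}) \Rightarrow (\ref{thm:general1:I-star-convergence})$ of Theorem~\ref{thm:general1}, which uses that the summable ideal $\I_{1/n}$ is a P-ideal.
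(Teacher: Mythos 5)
Your proposal is correct and coincides with the paper's own proof, which likewise applies Theorem~\ref{thm:general1} with $g(x)=x$, $b_n=1$, $c_n=n$ and invokes Remark~\ref{non-negative}(\ref{non-negative:non-negative-instead-positive}) to pass from positive to non-negative sequences. All the verifications you spell out (the limit condition with $\gamma=M=1$, the identification $\I_{(b_n g(1/c_n))}=\I_{1/n}$) are exactly the implicit bookkeeping behind the paper's one-line argument.
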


\begin{proof}
Apply Theorem~\ref{thm:general1} with  
$g(x)=x$, $b_n=1$ and $c_n=n$ and Remark~\ref{non-negative}(\ref{non-negative:non-negative-instead-positive}). 
\end{proof}

\begin{corollary}[{Mi\v{s}\'{\i}k-T\'{o}th, ~\cite{MR4327685}}]
Let $\I$ be an ideal on $\N$.
Let $p, q$ be fixed positive numbers and $\alpha,\beta$ be fixed nonnegative numbers. 
Then the following conditions are equivalent.
\begin{enumerate}
\item For every sequence $(d_n)$ of positive numbers we have
$$\sum_{n=1}^{\infty} n^\alpha d_n^p<\infty\implies  \I-\lim n^\beta d_n^q=0.$$

\item 
The ideal $\I$ extends the summable ideal generated by the sequence $(n^{\alpha - \beta p/q})$:
$$\I_{\left(n^{\alpha - \beta p/q}\right)}\subseteq\I.$$
\end{enumerate}
\end{corollary}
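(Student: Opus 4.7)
The plan is to apply Theorem~\ref{thm:general2} with the choices $g(x)=x^{p/q}$, $b_n=n^{\alpha}$, $c_n=n^{\beta}$ and the substitution $a_n=d_n^{p}$. Since $p,q>0$, the map $d\mapsto d^{p}$ is a bijection from $(0,\infty)$ onto $\ran(g)=(0,\infty)$, so quantifying over all positive $(d_n)$ is the same as quantifying over all $(a_n)$ with $a_n\in\ran(g)$. Under this substitution, $\sum_{n=1}^{\infty} b_n a_n = \sum_{n=1}^{\infty} n^{\alpha} d_n^{p}$, and since $g^{-1}(y)=y^{q/p}$, one computes $c_n g^{-1}(a_n)=n^{\beta}(d_n^{p})^{q/p}=n^{\beta}d_n^{q}$. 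Moreover $b_n g(1/c_n) = n^{\alpha}\cdot n^{-\beta p/q} = n^{\alpha-\beta p/q}$, so the summable ideal $\I_{(b_n g(1/c_n))}$ is exactly $\I_{(n^{\alpha-\beta p/q})}$, matching condition~(2).

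Next I would verify the hypotheses of Theorem~\ref{thm:general2}. The function $g(x)=x^{p/q}$ is strictly increasing on $(0,\infty)$ with $\lim_{x\to 0^{+}}g(x)=0$, and for every $\varepsilon>0$ and every $x>0$ one has $g(x)/g(\varepsilon x)=\varepsilon^{-p/q}$, so the uniform bound condition holds with $M=\varepsilon^{-p/q}$. The sequences $b_n=n^{\alpha}$ and $c_n=n^{\beta}$ are positive (for this only $\alpha,\beta\geq 0$ is required). With all hypotheses verified, the desired equivalence is precisely the equivalence of parts (\ref{thm:general2:I-convergence}) and (\ref{thm:general2:ideals}) of Theorem~\ref{thm:general2}.

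There is essentially no obstacle; the only point worth flagging is why Theorem~\ref{thm:general2} is preferable to Theorem~\ref{thm:general1}. The function $g(x)=x^{p/q}$ also satisfies the hypothesis of Theorem~\ref{thm:general1} (with $\gamma=p/q$ and $M=1$), but Theorem~\ref{thm:general1} additionally requires $c_n\to\infty$, which fails when $\beta=0$. Theorem~\ref{thm:general2} imposes no growth condition on $(c_n)$, so it handles all nonnegative $\beta$ uniformly.
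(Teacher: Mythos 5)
Your proposal is correct and follows exactly the paper's route: the paper likewise deduces the corollary by applying Theorem~\ref{thm:general2} with $g(x)=x^{p/q}$, $a_n=d_n^p$, $b_n=n^\alpha$ and $c_n=n^\beta$. Your verification of the hypotheses and your remark on why Theorem~\ref{thm:general2} rather than Theorem~\ref{thm:general1} is needed (the case $\beta=0$) are accurate elaborations of what the paper leaves implicit.
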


\begin{proof}
We can apply Theorem~\ref{thm:general2} with  
$g(x)=x^{p/q}$,
$a_n=d_n^p$, $b_n=n^\alpha$ and $c_n=n^\beta$. 
\end{proof}

\begin{corollary}[{Bartoszewicz-G\l{}\c{a}b-Widz~\cite{MR4317877}}]
\label{cor:Bartoszewicz-Glab-Widz-series}
Let $\I$ be an ideal on $\N$.
Let $(b_n)$, $(c_n)$ be sequences of positive numbers and let $p$, $q$ be fixed positive numbers. 
Then the following conditions are equivalent.
\begin{enumerate}
\item For every sequence $(d_n)$ of positive numbers we have
$$\sum_{n=1}^{\infty} b_n d_n^p<\infty\implies  \I-\lim c_n d_n^q=0.$$

\item The ideal $\I$ extends the summable ideal generated by the sequence $(b_n c_n^{-p/q})$:
$$\I_{\left(b_n c_n^{-p/q}\right)}\subseteq\I.$$
\end{enumerate}
\end{corollary}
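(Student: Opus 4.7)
The plan is to derive the corollary as an immediate application of Theorem~\ref{thm:general2}, with the substitution $g(x)=x^{p/q}$ and $a_n=d_n^p$, keeping $(b_n)$ and $(c_n)$ unchanged.

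First I would verify that $g(x)=x^{p/q}$ satisfies the hypotheses of Theorem~\ref{thm:general2}. Since $p,q>0$, this $g$ is a strictly increasing bijection from $(0,\infty)$ onto $(0,\infty)$ with $\lim_{x\to 0^+} g(x)=0$. The multiplicative condition is immediate because
$$\frac{g(x)}{g(\varepsilon x)}=\frac{x^{p/q}}{(\varepsilon x)^{p/q}}=\varepsilon^{-p/q}$$
is a constant in $x$, so one can take $M=\varepsilon^{-p/q}$.

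Next I would match up the two statements under this substitution. Since $g$ is a bijection from $(0,\infty)$ onto $(0,\infty)$, the condition ``$a_n\in\ran(g)$'' becomes ``$a_n>0$'', and the map $d_n\mapsto a_n=d_n^p$ is a bijection between positive sequences. Then $g^{-1}(a_n)=d_n^q$, so the hypothesis $\sum b_n a_n<\infty$ becomes $\sum b_n d_n^p<\infty$, the conclusion $\I-\lim c_n g^{-1}(a_n)=0$ becomes $\I-\lim c_n d_n^q=0$, and the generator of the summable ideal, $b_n g(1/c_n)$, becomes $b_n c_n^{-p/q}$. Plugging these translations into the equivalence $(\ref{thm:general2:I-convergence})\iff(\ref{thm:general2:ideals})$ of Theorem~\ref{thm:general2} yields exactly the equivalence claimed in the corollary.

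There is no real obstacle in this reduction; the only step requiring any attention is the multiplicative condition on $g$, which for a power function is a one-line verification. Note also that in the same way, a strengthened version of the corollary with $\I^*$-convergence in place of $\I$-convergence comes for free, by using the equivalence $(\ref{thm:general2:I-star-convergence})\iff(\ref{thm:general2:ideals})$ of Theorem~\ref{thm:general2} instead.
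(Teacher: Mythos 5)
Your proposal is correct and is essentially identical to the paper's proof, which likewise obtains the corollary by applying Theorem~\ref{thm:general2} with $g(x)=x^{p/q}$, $a_n=d_n^p$, and the given $(b_n)$, $(c_n)$. The only difference is that you spell out the (one-line) verification of the hypotheses on $g$ and the translation of the conditions, which the paper leaves implicit.
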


\begin{proof}
We can apply Theorem~\ref{thm:general2} with  
$g(x)=x^{p/q}$,
$a_n=d_n^p$, $b_n=b_n$ and $c_n=c_n$. 
\end{proof}

\begin{remark}
One can show that for instance functions $g(x)=e^x-1$, $g(x)=\ln(x+1)$, $g(x)=\arctan x$ and even $g(x)=\Phi(x)-1/2$ (where $\Phi(x)$ is the cumulative distribution function of the standard normal distribution) satisfy the assumptions of Theorem~\ref{thm:general1}. 
On the other hand, all these functions are not the power functions $x^{p/q}$ considered in Corollary~\ref{cor:Bartoszewicz-Glab-Widz-series}.
\end{remark}

%%%%%%%%%%%%%%%%%%%%%%%%%%%%%%%%%%%%%%%%%%%%%%%%%%%%%%%%%%%%%%%%%%%%%%%%%

\subsection{$\I^+$ test}
\label{subsec:general3}

\begin{theorem}\label{thm:general3}
Let $\I$ be an ideal on $\N$.
Let $g$, $(b_n)$ and $(c_n)$ be like in Theorem~\ref{thm:general1} or Theorem~\ref{thm:general2}. 
Then the following conditions are equivalent.
\begin{enumerate}
\item For every sequence $(a_n)$ with  $a_n\in\ran(g)$ we have\label{thm:general3:I-plus-convergence}
$$\sum_{n=1}^{\infty} b_n a_n<\infty \implies   \I^+-\lim c_n g^{-1}(a_n)=0.$$

\item The filter dual to $\I$ is disjoint from the summable ideal generated by the sequence $(b_n g(1/c_n))$:
$$ \I_{\left(b_n g(1/c_n)\right)}\cap \I^*=\emptyset. $$\label{thm:general3:ideals}
\end{enumerate}
\end{theorem}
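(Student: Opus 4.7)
The plan is to prove the two directions by leveraging Theorem~\ref{thm:general1}/\ref{thm:general2} applied with the summable ideal $\J := \I_{(b_n g(1/c_n))}$ in place of $\I$ for one implication, and by imitating the counterexample construction from those proofs (a sequence that equals $g(1/c_n)$ on a prescribed set) for the other.

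For $(\ref{thm:general3:ideals})\Rightarrow(\ref{thm:general3:I-plus-convergence})$, I first apply Theorem~\ref{thm:general1} or Theorem~\ref{thm:general2} with the ideal $\J$ itself; since $\J \subseteq \J$ trivially, condition (\ref{thm:general1:I-star-convergence}) of that theorem gives $\J^*$-$\lim c_n g^{-1}(a_n)=0$ for every admissible $(a_n)$ with $\sum b_n a_n<\infty$. So there exists $F\in\J^*$ along which the subsequence converges to $0$. The task is then to upgrade $F\in\J^*$ to $F\in\I^+$. Suppose for contradiction $F\in\I$; then $\N\setminus F\in\I^*$, while $\N\setminus F\in\J=\I_{(b_n g(1/c_n))}$ holds by definition of $F\in\J^*$. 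This puts $\N\setminus F$ in $\I_{(b_n g(1/c_n))}\cap\I^*$, contradicting the disjointness assumption. Hence $F\in\I^+$, which witnesses $\I^+$-$\lim c_n g^{-1}(a_n)=0$.

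For $(\ref{thm:general3:I-plus-convergence})\Rightarrow(\ref{thm:general3:ideals})$, I argue contrapositively. Pick any $A\in \I_{(b_n g(1/c_n))}\cap\I^*$ and reuse the construction from the proofs of Theorems~\ref{thm:general1} and \ref{thm:general2}: choose $d_n\in\ran(g)$ with $d_n\leq 1/(n^2 b_n)$ (possible since $\lim_{x\to 0^+} g(x)=0$ under either set of hypotheses) and set $a_n=g(1/c_n)$ for $n\in A$ and $a_n=d_n$ otherwise. The same splitting as before yields $\sum_n b_n a_n<\infty$, and for every $n\in A$ we have $c_n g^{-1}(a_n)=1$. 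Now suppose $B\in\I^+$ were such that $(c_n g^{-1}(a_n))_{n\in B}\to 0$; then $B\cap A$ must be finite. But $A\in\I^*$ gives $\N\setminus A\in\I$, so $B\subseteq (B\cap A)\cup(\N\setminus A)\in\I$, contradicting $B\in\I^+$. Thus condition (\ref{thm:general3:I-plus-convergence}) fails for this $(a_n)$.

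The genuine content here is the somewhat delicate interplay in the forward direction between the ``filter'' witness $F\in\J^*$ provided by the P-ideal property of the summable ideal and the ``coideal'' witness required by $\I^+$-convergence. The hypothesis that $\I_{(b_n g(1/c_n))}\cap \I^*=\emptyset$ is precisely the right bridge: it prevents any $\J^*$-witness from accidentally landing inside $\I$, and no stronger inclusion between $\I$ and $\I_{(b_n g(1/c_n))}$ is needed, which is what distinguishes this result from Theorems~\ref{thm:general1} and \ref{thm:general2}.
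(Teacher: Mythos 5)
Your proof is correct and follows essentially the same route as the paper's: the identical counterexample construction (with $a_n=g(1/c_n)$ on $A$ and $d_n\le 1/(n^2b_n)$ elsewhere) for $(\ref{thm:general3:I-plus-convergence})\Rightarrow(\ref{thm:general3:ideals})$, and for the converse the same underlying mechanism, namely that the P-ideal property of the summable ideal $\J=\I_{(b_n g(1/c_n))}$ produces a $\J^*$-witness of convergence which the disjointness hypothesis upgrades to an $\I^+$-witness. Your only departure is that you obtain that witness by citing Theorem~\ref{thm:general1}/\ref{thm:general2} applied with $\I=\J$, whereas the paper re-derives it by hand (arguing contrapositively via the sets $A_k=\{n:c_n g^{-1}(a_n)\ge 1/k\}$); this is a legitimate and slightly cleaner packaging of the same argument.
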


\begin{proof}
$(\ref{thm:general3:I-plus-convergence})\implies (\ref{thm:general3:ideals})$
Suppose that there exists $A\in \I_{\left(b_n g(1/c_n)\right)}\cap\I^*$. For any $n\in\N$, take as $d_n$ some element of $\ran(g)$ with $d_n\leq \frac{1}{n^2 b_n}$. We can find such an element by the assumption $\lim_{x\to 0^+} g(x)=0$.
 We define
$$a_n=\left\{\begin{array}{ll}
g(1/c_n) & \textrm{ for } n\in A,\\
d_n & \textrm{ for } n\not\in A. \end{array}\right. $$

We can see that $\sum_{n=1}^{\infty}  b_n a_n<\infty$ because
$$\sum_{n=1}^{\infty}b_n a_n\leq \sum_{n\in A} b_n g(1/c_n) + \sum_{n\not\in A} \frac{b_n}{b_n n^2}.$$
Since $A\in \I_{\left(b_n g(1/c_n)\right)}$, the first sum is finite and the second sum is finite because the series $\sum \frac{1}{n^2}$ is convergent. 

On the other hand, for all $n\in A$ we have 
$$ c_n g^{-1}(a_n)=c_n g^{-1}(g(1/c_n))=c_n (1/c_n)=1.$$
Since $A\in\I^*$, it follows that for any $B\in\I^+$ there are infinitely many $n\in B\cap A$ with $c_n g^{-1}(a_n)=1$, thus we cannot have $\I^+-\lim c_n g^{-1}(a_n)=0$.

\smallskip

$(\ref{thm:general3:ideals})\implies (\ref{thm:general3:I-plus-convergence})$
Suppose that there exists a positive sequence $(a_n)$ with $\sum_{n=1}^\infty b_n a_n<\infty $ such that for any $B\in\I^+$ we have $\limsup_{n\in B}  c_n g^{-1}(a_n)>0$.
Consider the sets $A_k=\{n\in\N:  c_n g^{-1}(a_n)\geq 1/k\}$. We can notice that for each $k\in\N$ we have $A_k\in \I_{\left(b_n g(1/c_n)\right)}$. Indeed, let us assume that it is not the case for some $k\in\N$. Then
$$\infty>\sum_{n\in\N} b_n a_n \geq \sum_{n\in A_k}b_n a_n \geq \sum_{n\in A_k}b_n g\left(\frac{1}{k c_n} \right),  $$ 
which is infinite since $A_k\notin \I_{\left(b_n g(1/c_n)\right)}$ and 
$$\limsup_{n\to\infty}\frac{g(1/c_n)}{g(\varepsilon/c_n)}\in(0,\infty)  $$  
for any $\varepsilon>0$ by the reasonings presented in the proofs of Theorems~\ref{thm:general1} and \ref{thm:general2}, thus bringing us to a contradiction.

Now, since $\I_{\left(b_n g(1/c_n)\right)}$ is a P-ideal, there exists $B\in\I_{\left(b_n g(1/c_n)\right)}^*$ such that $B\cap A_k$ is finite for all $k\in\N$. We can see that $\lim_{n\in B}c_n g^{-1}(a_n)=0$. By our assumption we get $B\not\in\I^+$, hence $B\in\I$. If we now take $C=\N\setminus B$, we obtain $C\in \I_{\left(b_n g(1/c_n)\right)}\cap\I^*$. 
\end{proof}

\begin{remark}
\label{non-negative2}
Notice that Theorem~\ref{thm:general3} would still be true if we add $0$ to the domain and codomain of $g$ and require that $g(0)=0$.
\end{remark}

\begin{corollary}\label{cor:square-plus}
Let $\I$ be an ideal on $\N$.
Then the following conditions are equivalent.
\begin{enumerate}
\item For every sequence $(a_n)$ of non-negative numbers  we have
$$\sum_{n=1}^{\infty} a_n<\infty \implies   \I^+-\lim n a_n=0.$$

\item 
The filter dual to $\I$ is disjoint from the summable ideal:
$$ \I_{1/n}\cap \I^*=\emptyset. $$
\end{enumerate}
\end{corollary}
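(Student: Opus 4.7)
The plan is to derive this corollary as a direct specialization of Theorem~\ref{thm:general3}, exactly in parallel with how Corollary~\ref{cor:Salat-Toma} was obtained from Theorem~\ref{thm:general1}. First I would check that the choices $g(x)=x$, $b_n=1$, $c_n=n$ satisfy the hypotheses of either Theorem~\ref{thm:general1} or Theorem~\ref{thm:general2} (as required by Theorem~\ref{thm:general3}): the identity $g(x)=x$ is strictly increasing with $\lim_{x\to 0^+}g(x)/x^1 = 1$, so the assumption of Theorem~\ref{thm:general1} holds with $\gamma=1$, $M=1$; and clearly $c_n = n \to \infty$.

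Next I would translate the statement of Theorem~\ref{thm:general3} under this substitution. The expression $c_n g^{-1}(a_n)$ becomes $n \cdot a_n$, so condition~(\ref{thm:general3:I-plus-convergence}) of Theorem~\ref{thm:general3} reads: for every sequence $(a_n)$ with $a_n\in\ran(g) = (0,\infty)$, if $\sum_{n=1}^\infty a_n < \infty$ then $\I^+-\lim na_n = 0$. The generating sequence for the summable ideal is $b_n g(1/c_n) = 1/n$, so condition~(\ref{thm:general3:ideals}) becomes $\I_{1/n} \cap \I^* = \emptyset$, which is precisely what we want. Invoking Remark~\ref{non-negative2} lets us extend the conclusion from positive to non-negative sequences without altering the proof, which gives the stated corollary.

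There is no real obstacle here: the corollary is a clean specialization, and both the ``translation of symbols'' step and the ``positive versus non-negative'' step have already been set up in the general framework. The only thing to be careful about is to cite Remark~\ref{non-negative2} so that the non-negativity (as opposed to strict positivity) in the hypothesis of the corollary is properly justified.
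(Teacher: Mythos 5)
Your proposal is correct and matches the paper's proof exactly: the paper also obtains this corollary by applying Theorem~\ref{thm:general3} with $g(x)=x$, $b_n=1$, $c_n=n$ together with Remark~\ref{non-negative2}. Your verification of the hypotheses and the translation of the generating sequence to $1/n$ are exactly the (routine) details the paper leaves implicit.
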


\begin{proof}
Apply Theorem~\ref{thm:general3} with  
$g(x)=x$,
 $b_n=1$ and $c_n=n$ and Remark~\ref{non-negative2}. 
\end{proof}

%%%%%%%%%%%%%%%%%%%%%%%%%%%%%%%%%%%%%%%%%%%%%%%%%%%%%%%%%%%%%%%%%%%%%%%%%

\section{The ideal test for the divergence of an infinite series with monotone terms}
\label{sec:general4-monotone}

\begin{definition}
For an infinite set $X=\{x_1<x_2<\ldots\}\subseteq\N$, we define $f_X:\N\to\N$ by  
$$f_X(i)=\frac{1}{x_n} \iff i\in(x_{n-1}, x_n] \text{ for some } n\in\N$$ 
(we take $x_0=0$), and 
$$\I_X=\left\{A\subseteq\N: \sum_{n\in A} f_X(n)<\infty \right\}.$$ 
\end{definition}

The following easy proposition summarizes few basic properties of ideals of the form $\I_X$.

\begin{proposition}\label{prop:IXproperties} \ 
\begin{enumerate}
\item $\I_{\N}=\I_{1/n}$.
    \item 
For any infinite $X$, $\I_X$ is equal to the summable ideal generated by the sequence $f_X$:   $\I_X=\I_{(f_X)}$.
\item If $X\subseteq Y$ then $\I_X\supseteq\I_Y$. 
\item $\I_{1/n}\subseteq \I_X$ for every infinite $X$.
\item For any infinite $X$, if $A\in\I_X^*$ then $A$ has upper asymptotic density $1$:
$$A\in\I^*_X\Longrightarrow\limsup_{n\to\infty}\frac{|A\cap\{1,\ldots,n\}|}{n}=1. $$\label{prop:IXproperties:upper-density}
\end{enumerate}
\end{proposition}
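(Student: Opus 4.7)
The plan is to dispatch parts (1)--(4) quickly and to treat (5) as the one step requiring real work. Part (1) is a direct unpacking: when $X=\N$ we have $x_n=n$ and each interval $(x_{n-1},x_n]$ equals $\{n\}$, so $f_\N(n)=1/n$ and the definitions of $\I_\N$ and $\I_{1/n}$ coincide. For (2) the only thing to check is that $\sum_n f_X(n)=\infty$ so that $\I_{(f_X)}$ is a legitimate summable ideal in the sense of the paper; once this is known, the defining formulas for $\I_X$ and $\I_{(f_X)}$ are literally the same. To prove divergence I would rewrite
$$\sum_n f_X(n)=\sum_n \frac{x_n-x_{n-1}}{x_n}=\sum_n\left(1-\frac{x_{n-1}}{x_n}\right),$$
and argue that if the terms $\epsilon_n:=1-x_{n-1}/x_n$ were summable, then $\prod_n(1-\epsilon_n)$ would converge to a strictly positive limit, forcing $x_n=x_1/\prod_{k=2}^n(1-\epsilon_k)$ to stay bounded, contradicting $x_n\to\infty$.

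For (3), given $X\subseteq Y$ I would compare $f_X$ and $f_Y$ pointwise. If $i\in(x_{n-1},x_n]\cap(y_{m-1},y_m]$, then $x_n\in Y$ satisfies $x_n\geq i$, so minimality of $y_m$ forces $y_m\leq x_n$ and hence $f_Y(i)=1/y_m\geq 1/x_n=f_X(i)$. Summing over any $A$ gives $\I_Y\subseteq\I_X$. Part (4) is then immediate: $X\subseteq\N$ combined with (1) and (3) yields $\I_{1/n}=\I_\N\subseteq\I_X$.

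The substantive step is (5). Given $A\in\I_X^*$, equivalently $B:=\N\setminus A\in\I_X$, I introduce $I_k=(x_{k-1},x_k]$, $a_k=|B\cap I_k|$, and $s_k=a_1+\cdots+a_k=|B\cap\{1,\ldots,x_k\}|$, so the hypothesis becomes $\sum_k a_k/x_k<\infty$ (since $f_X$ is constant equal to $1/x_k$ on $I_k$). It is enough to produce a subsequence on which $|B\cap\{1,\ldots,n\}|/n\to 0$, and $n=x_k$ is the natural choice; thus the target is $\liminf_k s_k/x_k=0$.

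The main obstacle is converting the summability statement $\sum_k a_k/x_k<\infty$, which controls the increments $a_k=s_k-s_{k-1}$ with weights, into a density statement about the partial sums $s_k$ themselves. I plan to resolve this by Abel summation together with the divergence proved in step (2). Arguing by contradiction, suppose $s_k/x_k\geq\delta>0$ for all sufficiently large $k$. Then
$$\sum_{k=1}^{N}\frac{a_k}{x_k}=\frac{s_N}{x_N}+\sum_{k=1}^{N-1}s_k\left(\frac{1}{x_k}-\frac{1}{x_{k+1}}\right)\geq \delta\sum_{k=1}^{N-1}\frac{x_{k+1}-x_k}{x_{k+1}},$$
and the right-hand tail is, up to a shift, exactly $\sum_n f_X(n)$, which diverges by (2). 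This contradicts $\sum_k a_k/x_k<\infty$, so $\liminf_k s_k/x_k=0$, completing (5).
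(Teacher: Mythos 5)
Your argument is correct, and for item (5) it takes a genuinely different route from the paper. The paper proves the contrapositive in the form ``if $B$ has positive lower asymptotic density then $B\notin\I_X$'' by a combinatorial blocking argument: it covers $\N$ by dyadic-type intervals $[2^{nk},2^{nk+k})$ on which $B$ occupies a fixed proportion, passes to a sparse subset $Y\subseteq X$ meeting each such interval at most once, invokes item (3) to reduce to $\I_Y$, and shows each block contributes at least $\alpha/2^k$ to $\sum_{i\in B}f_Y(i)$. You instead work directly with $B=\N\setminus A\in\I_X$ and convert the hypothesis $\sum_k a_k/x_k<\infty$ on the increments $a_k=|B\cap(x_{k-1},x_k]|$ into the density conclusion $\liminf_k s_k/x_k=0$ by summation by parts, playing the partial sums off against the divergence of $\sum_n f_X(n)=\sum_k(x_{k+1}-x_k)/x_{k+1}$. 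Your Abel-summation identity and the lower bound $\sum_{k\ge K}^{N-1}s_k(1/x_k-1/x_{k+1})\ge\delta\sum_{k\ge K}^{N-1}(x_{k+1}-x_k)/x_{k+1}$ are both correct, and the divergence you need is exactly the fact you establish for item (2) (where the paper is silent, even though the definition of a summable ideal $\I_{(d_n)}$ formally requires $\sum_n d_n=\infty$); your telescoping-product proof of that divergence, starting the product at $n=2$ to avoid $x_0=0$, is sound. The net effect is that your proof of (5) is shorter and more analytic, makes the role of the divergence of $\sum f_X$ explicit, and avoids the auxiliary sparsified set $Y$; the paper's argument is more combinatorial and yields the slightly more quantitative statement that positive lower density already forces $\sum_{i\in B}f_X(i)=\infty$, but both establish the claim. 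Items (1)--(4) are handled the same way in substance (the paper calls them easy observations), and your pointwise comparison $f_Y\ge f_X$ for $X\subseteq Y$ is the right justification for (3).
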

\begin{proof}
The first four items are easy observations. We will prove the last item by showing that if $A$ has positive lower asymptotic density then $A\not\in\I_X$, i.e.
$$\liminf_{n\to\infty}\frac{|A\cap\{1,\ldots,n\}|}{n}>0\Longrightarrow A\not\in\I_X.$$

Take $A\subseteq\N$ with positive lower asymptotic density. Pick $\alpha>0$ such that the lower asymptotic density of $A$ is greater than $2\alpha$. 
First, observe that there exist $k,N\in\N$ such that for  all $n\geq N$ we have $|A\cap[2^{nk},2^{nk+k})|/2^{nk+k}>\alpha$. Indeed, otherwise for infinitely many $n\in\N$ we have 
$$2\alpha<\frac{|A\cap[1,2^{nk+k}]|}{2^{nk+k}}\leq \frac{2^{nk}}{2^{nk+k}}+ \frac{|A\cap[2^{nk},2^{n+k})|}{2^{nk+k}}\leq 2^{-k}+\alpha, $$
which is a contradiction for any $k\in\N$ with $2^{-k}\leq\alpha$.

Now, for any $n\in\N$ denote by $I_n$ the interval $[2^{nk},2^{nk+k})$. Let $Y=\{y_1<y_2<\ldots\}$ be such~a subset of $X$ that $|Y\cap I_n|\leq 1$ for all $n\in\N$ and $y_1>\max I_N$. Since $\I_X\subseteq\I_Y$ by (3), we will finish the proof by showing that $A\not\in \I_Y$.

Take any $y_n$. Then $y_n\in I_m$ for some $m>N$, thus
$$\sum_{i\in A\cap I_{m-1}}f_Y(i)\geq \frac{\alpha 2^{mk}}{y_n}\geq \frac{\alpha  2^{mk}}{2^{mk+k}}=\frac{\alpha}{2^k}. $$
Since that calculation holds for any $y_n$ and $Y$ is infinite, we obtain 
$$\sum_{i\in A}f_Y(i)\geq \sum_{n=1}^\infty \frac{\alpha}{2^k}=\infty, $$
hence $A\not\in\I_Y.$
\end{proof}

\begin{theorem}\label{thm:star-ideal}
Let $\I$ be an ideal on $\N$. 
Then the following conditions are equivalent.
\begin{enumerate}
\item For every $\I^*$-nonincreasing sequence  $(a_n)$ of positive reals we have\label{thm:star-ideal:I-star-convergence}
$$\sum_{n=1}^{\infty} a_n<\infty \implies   \I^*-\lim n a_n=0.$$

\item For every $\I^*$-nonincreasing sequence  $(a_n)$ of positive reals we have\label{thm:star-ideal:I-convergence}
$$\sum_{n=1}^{\infty} a_n<\infty \implies   \I-\lim n a_n=0.$$

\item  
The filter dual to $\I$ is disjoint from each ideal $\I_X$ with  $X\in \I^+$:
$$\forall X\in\I^+ \, \left( \I_{X}\cap \I^*=\emptyset\right).$$\label{thm:star-ideal:ideals}
\end{enumerate}
\end{theorem}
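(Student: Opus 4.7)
The plan is to go around the cycle $(1)\Rightarrow(2)\Rightarrow(3)\Rightarrow(1)$. The first implication is immediate, since $\I^*$-convergence implies $\I$-convergence.

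For $(2)\Rightarrow(3)$ I would argue by contraposition. Given $X\in\I^+$ and $A\in\I_X\cap\I^*$, I plan to set
\[
a_n=\begin{cases} f_X(n) & \text{if } n\in A,\\ 1/n^2 & \text{if } n\notin A.\end{cases}
\]
The function $f_X$ is nonincreasing on $\N$ (constantly $1/x_k$ on $(x_{k-1},x_k]$), so $A$ witnesses that $(a_n)$ is $\I^*$-nonincreasing, and summability follows from $A\in\I_X$ together with $\sum 1/n^2<\infty$. The set $X\cap A$ lies in $\I^+$ (since $X\in\I^+$ while $X\setminus A\subseteq\N\setminus A\in\I$), and on it $a_n=1/n$, so $na_n=1$; thus $\{n:na_n\geq 1\}\supseteq X\cap A\notin\I$, contradicting (2).

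For $(3)\Rightarrow(1)$, I take $(a_n)$ that is $\I^*$-nonincreasing, positive, and summable, and let $F=\{f_1<f_2<\cdots\}\in\I^*$ witness that $(a_{f_k})_k$ is nonincreasing; set $B_\varepsilon=\{n:na_n\geq\varepsilon\}$. The first step will be to show $B_\varepsilon\cap F\in\I$ for every $\varepsilon>0$: otherwise, setting $X:=B_\varepsilon\cap F\in\I^+$ and enumerating $X=\{f_{k_j}\}_j$, combining $f_{k_j}a_{f_{k_j}}\geq\varepsilon$ with the monotonicity estimate $\sum_{k_{j-1}<k\leq k_j}a_{f_k}\geq (k_j-k_{j-1})a_{f_{k_j}}$ yields
\[
\sum_{n\in F}f_X(n)=\sum_j\frac{k_j-k_{j-1}}{f_{k_j}}\leq\frac{1}{\varepsilon}\sum_j(k_j-k_{j-1})a_{f_{k_j}}\leq\frac{1}{\varepsilon}\sum_k a_{f_k}<\infty,
\]
forcing $F\in\I_X\cap\I^*$ and contradicting (3). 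The sets $B_{1/k}\cap F$ then all lie in $\I\cap\I_F$, and since $\I_F$ is a summable (hence $P$-)ideal, this family admits a pseudo-union $H\subseteq F$ with $H\in\I_F$ and $B_{1/k}\cap F\subseteq^*H$ for every $k$; a natural candidate is $H_\phi:=\{f_j\in F:f_ja_{f_j}\geq 1/\phi(j)\}$ for a slowly growing $\phi\to\infty$. With such $H_\phi$, the set $G:=F\setminus H_\phi$ automatically satisfies $\lim_{n\in G}na_n=0$, and $G\in\I^*$ as soon as $H_\phi\in\I$.

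The core of the proof, and its main obstacle, is this last step: upgrading the pseudo-union from $\I_F$ to $\I$. The plan is to apply $(3)$ to $X=H_\phi$: if $H_\phi\in\I^+$ then $F\notin\I_{H_\phi}$, i.e.\ $\sum_{n\in F}f_{H_\phi}(n)=\sum_i(\psi(i)-\psi(i-1))/f_{\psi(i)}=\infty$, where $\psi$ enumerates the positions in $F$ of the elements of $H_\phi$. The hard part will be choosing $\phi$ (for instance a variant of $\phi(j)\sim 1/\sqrt{r_j}$, where $r_j:=\sum_{i\geq j}a_{f_i}$) so that the Abel-type inequality $(\psi(i)-\psi(i-1))a_{f_{\psi(i)}}\leq\sum_{\psi(i-1)<k\leq\psi(i)}a_{f_k}$ instead forces $\sum_{n\in F}f_{H_\phi}(n)<\infty$. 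That contradiction with (3) would deliver $H_\phi\in\I$ and complete the argument.
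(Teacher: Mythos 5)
Your implications $(1)\Rightarrow(2)$ and $(2)\Rightarrow(3)$, and the first step of $(3)\Rightarrow(1)$ (ruling out the case that $\{n\in F: na_n\geq\varepsilon\}\in\I^+$ for some $\varepsilon>0$, via the estimate $\sum_{n\in F}f_X(n)\leq\varepsilon^{-1}\sum_k a_{f_k}$), coincide with the paper's argument. The problem is the remaining case, which is the heart of the theorem: you explicitly defer ``the hard part'' of choosing $\phi$ so that $\sum_{n\in F}f_{H_\phi}(n)<\infty$, and this step is not routine — indeed the natural estimate breaks down. Writing $\Psi=\{\psi(1)<\psi(2)<\cdots\}$ for the positions of $H_\phi$ in $F$ and $u_k=a_{f_k}$, your Abel-type bound gives
$$\sum_{n\in F}f_{H_\phi}(n)=\sum_i\frac{\psi(i)-\psi(i-1)}{f_{\psi(i)}}\leq\sum_i\phi(\psi(i))\sum_{\psi(i-1)<k\leq\psi(i)}u_k,$$
and the factor $\phi(\psi(i))$ is evaluated at the \emph{right endpoint} of each gap, whereas the terms $u_k$ being summed sit throughout the gap. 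Since $\phi$ is nondecreasing, $\phi(\psi(i))\geq\phi(k)$ for all $k$ in the gap, so this bound is at least $\sum_k\phi(k)u_k$ and can be far larger when $\Psi$ has long gaps; in particular the candidate $\phi(j)\sim r_j^{-1/2}$ leaves you with terms of the shape $(r_{\psi(i-1)+1}-r_{\psi(i)})/\sqrt{r_{\psi(i)}}$, which need not telescope against $2(\sqrt{r_{\psi(i-1)}}-\sqrt{r_{\psi(i)}})$ when the tail $r$ drops sharply across a gap. So the convergence you need is not established, and it is not clear that any position-indexed threshold $\phi(j)$ admits a direct proof of it.

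The paper circumvents exactly this difficulty by organizing the construction by \emph{level sets of the value $na_n$} rather than by position: it picks $\varepsilon_k\downarrow 0$ with $X_k=\{n\in A: na_n\in[\varepsilon_k,\varepsilon_{k-1})\}\in\I\setminus\fin$, so that on each $X_k$ the threshold is the \emph{constant} $\varepsilon_k$ and the case-(a) estimate gives $\sum_{n\in A}f_{X_k}(n)\leq\varepsilon_k^{-1}\sum_{n\in A}a_n<\infty$ uniformly over the block. It then discards an initial segment of each $X_k$ so that the remaining contribution is below $2^{-k}$, and takes $X$ to be the union of these tails. Two points do the work that your sketch is missing: (i) the summability $\sum_{n\in A}f_X(n)<\infty$ comes from the per-level bounds plus the $2^{-k}$ truncation (together with a small combinatorial check that the enumeration gaps of $X$ refine those of the individual $X_k$); and (ii) $X\in\I^+$ because a complement of $X$ in $\I^*$ would meet every $X_k$ finitely, contradicting the assumed failure of $\I^*$-convergence. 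This yields $A\in\I_X\cap\I^*$ with $X\in\I^+$, contradicting (3) directly, rather than trying to certify that a single exceptional set $H_\phi$ lies in $\I$. As it stands, your proposal has a genuine gap at its central step.
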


\begin{proof}
$(\ref{thm:star-ideal:I-star-convergence})\implies (\ref{thm:star-ideal:I-convergence})$ 
 It follows from the fact that $\I^*$-convergence implies $\I$-convergence.

\smallskip

$(\ref{thm:star-ideal:I-convergence})\implies (\ref{thm:star-ideal:ideals})$ 
Suppose there exist $X\in\I^+$ and $A\subseteq\N$ such that $A\in\I_{X}\cap \I^*\not=\emptyset$.
Define
$$a_n=
\begin{cases}
f_X(n) & \text{ for } n\in A,\\
1/n^2 & \text{ for } n\not\in A. 
\end{cases} $$

Since $f_X$ is nonincreasing, the sequence $a_n$ is nonincreasing on the set $A\in\I^*$. Moreover, $$\sum_{n\in\N}a_n=\sum_{n\in A} f_X(n)+\sum_{n\not\in A} \frac{1}{n^2} <\infty,$$
because $A\in\I_X$.

On the other hand, for every $n=x_k\in X\cap A$ we have 
$$n a_n=x_k f_X(x_k)=x_k\cdot \frac{1}{x_k}=1.$$
Since $X\cap A\in\I^+$,  the sequence $(n a_n)$ cannot be $\I$-convergent to zero.

\smallskip

$(\ref{thm:star-ideal:ideals})\implies (\ref{thm:star-ideal:I-star-convergence})$ 
Suppose there exists a sequence $(a_n)$ with $\sum_{n\in\N}a_n<\infty$ that is nonincreasing on some set $A\in\I^*$ and  the sequence $(n a_n)$ is not $\I^*$-convergent to zero. 

We have two cases.
\begin{enumerate}
    \item[(a)] There is an $\varepsilon>0$ such that $\{n\in A: n a_n>\varepsilon\}\in\I^+$. 

\item [(b)]
For every $\varepsilon>0$ we have $\{n\in A: n a_n>\varepsilon\}\in\I$
\end{enumerate}

In case (a), let $\varepsilon>0$ be such that $X = \{n\in A: n  a_n>\varepsilon\}\in \I^+$, and   enumerate the elements of $X$ increasingly by $x_1,x_2,\ldots$. 

Since the sequence $(a_n)$ is nonincreasing on $A$ and $X\subseteq A$, we can notice that $a_n\geq a_{x_k}$ for $n\in (x_{k-1},x_k]\cap A$, $k\in\N$.
Therefore,
\begin{equation*}
    \begin{split}
        \sum_{n\in A} a_n
        & \geq 
        \sum_{k\in\N} \sum_{n\in (x_{k-1},x_k]\cap A}a_{x_k}
        >
        \sum_{k\in\N} \sum_{n\in (x_{k-1},x_k]\cap A}\frac{\varepsilon}{x_k}
        \\&=
        \varepsilon\sum_{k\in\N} \sum_{n\in (x_{k-1},x_k]\cap A}\frac{1}{x_k}
        =
        \varepsilon \sum_{n\in A} f_X(n). 
    \end{split}
\end{equation*}
From the assumption that  $\sum_{n\in A} a_n<\infty$, it follows that $A\in\I_X$. Thus, $A\in\I_X\cap\I^*$, which makes $\I_X\cap\I^*$ nonempty.

In case (b), since the sequence $(n a_n)$ is not $\I^*$-convergent to $0$ and $A\notin\I$, we can find a strictly decreasing sequence $(\varepsilon_k)$ tending to $0$ such that $X_k=\{n\in A: n a_n\in [\varepsilon_k,\varepsilon_{k-1})\}\in \I\setminus\fin$ for every $k\in\N$ (we put $\varepsilon_0=\infty$). Observe also that for every $B\in\I^*$ there is some $k\in\N$ with $B\cap X_k\not\in\fin$. Enumerate elements of each $X_k$ increasingly by $x_1^{(k)},x_2^{(k)},\ldots$ and add $x_0^{(k)}=0$.

We will prove that $A\in\I_{X_k}$ for every $k\in\N$. Take any $k\in\N$ and notice that for any $n\in X_k$ we have $a_n\geq\varepsilon_k/n$, thus, using the fact that $(a_n)$ is nonincreasing on $A$, we have
$$\sum_{n\in A}f_{X_k}(n)=\sum_{i\in \N}\frac{|A\cap (x_{i-1}^{(k)},x_i^{(k)}] |}{x_i^{(k)}}\leq \frac{1}{\varepsilon_k}\sum_{n\in A} a_n<\infty.  $$

Because $\sum_{n\in A}f_{X_k}(n)<\infty$ for each $k\in\N$, we can see that we may always find such $t_k\in\N$ that 
$$\sum_{i\geq t_k}\frac{|A\cap (x_{i-1}^{(k)},x_i^{(k)}] }{x_i^{(k)}}<\frac{1}{2^k}.$$
Moreover, by increasing $t_k$ if necessary, we can assume that for  each $k>1$ there exist some $j\geq t_1$ such that $x_j^{(1)}\in (x_{t_k-1}^{(k)},x_{t_k}^{(k)}) $.

Next, define $X=\bigcup_{k\in\N}X_k\setminus\{x_1^{(k)},\ldots,x_{t_k-1}^{(k)}\}$. Note that $X\in\I^+$ as otherwise \mbox{$\N\setminus X$} would be a set in $\I^*$ that has finite intersections with every $X_k$. Enumerate increasingly elements of $X$ by $x_1,x_2,\ldots$ and add $x_0=0$. Observe that $x_1=x_{t_1}^{(1)}$.

We will finish the proof by showing that $A\in \I_X$. In order to prove that,  observe that every $x_j$ is equal to $x_i^{(k)}$ for some $k\in\N$ and $i\geq t_k$. Moreover, for every $x_j=x_i^{(k)}$ other than $x_1=x_{t_1}^{(1)}$ we can notice that $(x_{j-1},x_j]\subseteq (x_{i-1}^{(k)}, x_i^{(k)}]$ because either $i>t_k$ and then $x_{i-1}^{(k)}\in X$, thus $x_{i-1}^{(k)}\leq x_{j-1}$, or $i=t_k$ and then  $X_1\cap X\cap (x_{t_k-1}^{(k)},x_{t_k}^{(k)})\not=\emptyset $, thus $x_{i-1}^{(k)}< x_{j-1}$. Therefore,
\begin{equation*}
    \begin{split}
\sum_{n\in A\setminus\{1,\ldots,x_1\}}f_{X}(n)
&=
\sum_{j\geq 2}\frac{|A\cap(x_{j-1},x_j]|}{x_j}
\\&\leq  
\sum_{k\in\N}\sum_{i\geq t_k}\frac{|A\cap (x_{i-1}^{(k)},x_i^{(k)}] |}{x_i^{(k)}}
< 
\sum_{k\in\N}\frac{1}{2^k}=1<\infty. 
\end{split}
\end{equation*}
It clearly follows that $\sum_{n\in A}f_{X}(n)<\infty$, thus $A\in\I_X\cap\I^*$.
\end{proof}

\begin{theorem}\label{thm:star-plus}
Let $\I$ be an ideal on $\N$. 
Then the following conditions are equivalent.
\begin{enumerate}
\item For every $\I^*$-nonincreasing sequence  $(a_n)$ of positive reals we have\label{thm:star-plus:I-plus-convergence}
$$\sum_{n=1}^{\infty} a_n<\infty \implies   \I^+-\lim n a_n=0.$$

\item 
The filter dual to $\I$ is disjoint from the summable ideal:
$$ \I_{1/n}\cap \I^*=\emptyset. $$\label{thm:star-plus:ideals}
\end{enumerate}
\end{theorem}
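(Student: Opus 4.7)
The plan is to mimic the structure of Corollary~\ref{cor:square-plus} from the $\I^+$-test, but now exploiting the $\I^*$-monotonicity of the sequence to replace the role played by the general summable ideal $\I_{(b_ng(1/c_n))}$ with the concrete ideal $\I_{1/n}$.

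For the direction $(\ref{thm:star-plus:ideals}) \Rightarrow (\ref{thm:star-plus:I-plus-convergence})$, I would start from an $\I^*$-nonincreasing positive sequence $(a_n)$ with $\sum a_n<\infty$, witnessed by some $F\in\I^*$ on which $(a_n)_{n\in F}$ is nonincreasing. For each $\varepsilon>0$ set
$$A_\varepsilon=\{n\in F: na_n\geq\varepsilon\}.$$
The first key step is the observation that $A_\varepsilon\in\I_{1/n}$: indeed, $n\in A_\varepsilon$ gives $\frac1n\leq\frac{a_n}{\varepsilon}$, so $\sum_{n\in A_\varepsilon}\frac1n\leq\frac1\varepsilon\sum_{n=1}^\infty a_n<\infty$. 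Since $\I_{1/n}$ is a P-ideal, the countable family $\{A_{1/k}:k\in\N\}$ admits a single $B\in\I_{1/n}^*$ such that $B\cap A_{1/k}$ is finite for every $k$. Now I would use the assumption $\I_{1/n}\cap\I^*=\emptyset$, which rephrases as $\I_{1/n}^*\subseteq\I^+$, to conclude $B\in\I^+$. Then $B\cap F\in\I^+$ as well (otherwise $B=(B\cap F)\cup(B\setminus F)$ would be a union of two sets in $\I$), and on $B\cap F$ the sequence $(na_n)$ tends to $0$: given $\varepsilon>0$, pick $k$ with $1/k<\varepsilon$ and note that if $n\in B\cap F$ satisfies $na_n\geq\varepsilon$, then $n\in B\cap A_{1/k}$, which is finite. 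Hence $\I^+\text{-}\lim na_n=0$ with witness $B\cap F$.

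For $(\ref{thm:star-plus:I-plus-convergence})\Rightarrow(\ref{thm:star-plus:ideals})$, I would argue by contraposition. If there is $A\in\I_{1/n}\cap\I^*$, define
$$a_n=\begin{cases} 1/n & n\in A,\\ 1/n^2 & n\notin A.\end{cases}$$
The sequence is $\I^*$-nonincreasing with $F=A$ witnessing it (on $A$ the values $1/n$ are nonincreasing), and $\sum a_n=\sum_{n\in A}\tfrac1n+\sum_{n\notin A}\tfrac{1}{n^2}<\infty$ precisely because $A\in\I_{1/n}$. However, $na_n=1$ for every $n\in A$, and since $A\in\I^*$, any candidate set $B\in\I^+$ intersects $A$ in an $\I^+$-set (in particular an infinite set), so $(na_n)_{n\in B}$ cannot converge to $0$. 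Thus $(na_n)$ is not $\I^+$-convergent to zero.

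The only step I expect to require any care is showing $B\cap F\in\I^+$ in the $(\ref{thm:star-plus:ideals})\Rightarrow(\ref{thm:star-plus:I-plus-convergence})$ direction, because it is there that the hypothesis $\I_{1/n}\cap\I^*=\emptyset$ is crucially combined with the fact that $F\in\I^*$; all remaining manipulations are direct computations with summable ideals and the P-ideal property, essentially parallel to the arguments already carried out in the proofs of Theorems~\ref{thm:general3} and~\ref{thm:star-ideal}.
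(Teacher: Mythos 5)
Your proof is correct and follows essentially the same route as the paper: the direction $(1)\Rightarrow(2)$ is the paper's construction verbatim, and your direct argument for $(2)\Rightarrow(1)$ (the sets $A_{1/k}$, the P-ideal property of $\I_{1/n}$, and passing to $B\cap F$) is just an unfolding of Corollary~\ref{cor:square-plus}, which the paper simply cites for this implication. Note that, as your computation shows, the monotonicity of $(a_n)$ is not needed in that direction, which is precisely why the citation of the general corollary suffices.
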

\begin{proof}
$(\ref{thm:star-plus:ideals})\implies (\ref{thm:star-plus:I-plus-convergence})$
It follows from Corollary~\ref{cor:square-plus}.

$(\ref{thm:star-plus:I-plus-convergence})\implies (\ref{thm:star-plus:ideals})$
Suppose that there exists $A\in \I_{1/n}\cap\I^*$.
We define
$a_n=1/n$ for $n\in A$
and $a_n=1/n^2$ for $n\not\in A$.
Then $(a_n)$ is $\I^*$-nonincreasing and we can see that 
$$\sum_{n=1}^{\infty}a_n=\sum_{n\in A} \frac{1}{n} + \sum_{n\not\in A} \frac{1}{n^2}<\infty,$$
because  $A\in \I_{1/n}$. 

On the other hand, for all $n\in A$ we have $n a_n=1$. Since $A\in\I^*$, it follows that for any $B\in\I^+$ there are infinitely many $n\in B\cap A$ with $n a_n=1$, thus we cannot have $\I^+-\lim n a_n=0$.
\end{proof}

\begin{corollary}[Faisant-Grekos-Mi{\v{s}}{\'i}k~\cite{MR3576795}]
\label{thm:fgm}
If  $\I$ is an ideal on $\N$ such that every $A\in\I$ has the upper asymptotic density less than 1, then  
$$\sum_{n=1}^{\infty} a_n<\infty \implies   \I^*-\lim n a_n=0$$
for every $\I^*$-nonincreasing sequence  $(a_n)$ of positive reals.
\end{corollary}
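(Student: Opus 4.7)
The plan is to derive this directly from Theorem~\ref{thm:star-ideal} by verifying its condition~(\ref{thm:star-ideal:ideals}); the desired conclusion is then exactly condition~(\ref{thm:star-ideal:I-star-convergence}). So it suffices to show that, under the density hypothesis on $\I$, one has $\I_X\cap\I^*=\emptyset$ for every $X\in\I^+$.

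First, I would translate the hypothesis into a lower-density estimate on members of $\I^*$. If $A\in\I^*$, then $\N\setminus A\in\I$, so by assumption $\N\setminus A$ has upper asymptotic density strictly less than $1$. A short computation gives
$$\liminf_{n\to\infty}\frac{|A\cap\{1,\ldots,n\}|}{n} = 1 - \limsup_{n\to\infty}\frac{|(\N\setminus A)\cap\{1,\ldots,n\}|}{n} > 0,$$
so every $A\in\I^*$ has positive lower asymptotic density.

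Second, I would appeal to the auxiliary fact established inside the proof of Proposition~\ref{prop:IXproperties}(\ref{prop:IXproperties:upper-density}), namely that any $A\subseteq\N$ with positive lower asymptotic density satisfies $A\notin\I_X$ for every infinite $X$. Combining this with the previous step yields $\I_X\cap\I^*=\emptyset$ for every $X\in\I^+$ (in fact for every infinite $X\subseteq\N$), and Theorem~\ref{thm:star-ideal} delivers the conclusion. There is no serious obstacle here; the only subtlety worth flagging is that Proposition~\ref{prop:IXproperties}(\ref{prop:IXproperties:upper-density}) as stated is slightly weaker than what its proof actually delivers, and it is this stronger intermediate implication that one needs to invoke.
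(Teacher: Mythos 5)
Your proof is correct and follows essentially the same route as the paper: both verify condition~(\ref{thm:star-ideal:ideals}) of Theorem~\ref{thm:star-ideal} by combining the density hypothesis with Proposition~\ref{prop:IXproperties}(\ref{prop:IXproperties:upper-density}), you merely phrasing it via $\I^*$ and lower density where the paper works with $\I$ and upper density. The one thing you flag as a subtlety is not one: since $A\in\I_X\iff\N\setminus A\in\I_X^*$ and $\liminf_n|A\cap\{1,\ldots,n\}|/n=1-\limsup_n|(\N\setminus A)\cap\{1,\ldots,n\}|/n$, the stated Proposition~\ref{prop:IXproperties}(\ref{prop:IXproperties:upper-density}) is equivalent to the ``stronger'' intermediate implication you invoke, so the proposition as stated already suffices.
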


\begin{proof}
Since every $A\in\I$ has the upper asymptotic density less than 1, by Proposition~\ref{prop:IXproperties}(\ref{prop:IXproperties:upper-density}) it follows that $\I\cap \I_X^*=\emptyset$ for every infinite $X\subseteq\N$. Hence, in particular, $\I_X\cap \I^*=\emptyset$ for every $X\not\in\I$. Therefore, by Theorem~\ref{thm:star-ideal} we obtain the desired result.
\end{proof}

\begin{proposition}
\label{prop:necessary-and-sufficient-conditions}
Let $\I$ be an ideal on $\N$.
In the following list of conditions, each implies the next
and none of the implications reverse.
\begin{enumerate}
    \item 
$\I_{1/n}\cap\I^+=\emptyset$.
\item $\I_{X}\cap \I^*=\emptyset$ for every $X\in\I^+$.

    \item 
$\I_{1/n}\cap\I^*=\emptyset$.

\end{enumerate}
\end{proposition}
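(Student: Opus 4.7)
The plan is to prove the chain $(1)\Rightarrow(2)\Rightarrow(3)$ and then exhibit witnesses showing both implications are strict. For $(1)\Rightarrow(2)$ I argue by contraposition: suppose $X\in\I^+$ and $A\in\I_X\cap\I^*$, and set $B:=X\cap A$. Since $A^c\in\I$ and $X\cap A^c\subseteq A^c$, we have $X\cap A^c\in\I$, and therefore $B=X\setminus(X\cap A^c)\in\I^+$ (else $X$ would be a union of two $\I$-sets). The decisive point is that $f_X$ agrees with $1/n$ on the set $X$: if $n=x_k$ then $f_X(n)=1/x_k=1/n$. Consequently
$$\sum_{n\in B}\frac{1}{n}=\sum_{n\in B}f_X(n)\leq\sum_{n\in A}f_X(n)<\infty,$$
so $B\in\I_{1/n}\cap\I^+$, contradicting $(1)$. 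The implication $(2)\Rightarrow(3)$ is immediate: $\N\in\I^+$ because $\I$ is proper, and $\I_\N=\I_{1/n}$ by Proposition~\ref{prop:IXproperties}(1), so specializing $(2)$ to $X=\N$ gives $\I_{1/n}\cap\I^*=\emptyset$.

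For $(2)\not\Rightarrow(1)$, take $\I=\fin$. Then $\{2^n:n\in\N\}\in\I_{1/n}\cap\fin^+$, so $(1)$ fails. On the other hand, any $B\in\fin^*$ is cofinite, and because $\sum_n f_X(n)=\infty$ (as $\I_X$ is a proper ideal), $\sum_{n\in B}f_X(n)=\infty$; hence no cofinite set lies in $\I_X$ and $(2)$ holds.

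For $(3)\not\Rightarrow(2)$ I build a principal-type ideal tailored to an explicit set. Let $X:=\{2^{2^k}:k\geq 1\}$, so the $k$-th $X$-block is $(2^{2^{k-1}},2^{2^k}]$ on which $f_X=1/2^{2^k}$. For each $k$, let $A_k:=(2^{2^{k-1}},2\cdot 2^{2^{k-1}}]$, the initial $2^{2^{k-1}}$ elements of that block, and put $A:=X\cup\bigcup_k A_k$. Short estimates give $\sum_{n\in A_k}f_X(n)=2^{2^{k-1}}/2^{2^k}=2^{-2^{k-1}}$ and $\sum_{n\in X}f_X(n)=\sum_k 2^{-2^k}$, both convergent, so $A\in\I_X$; whereas each term $1/n$ with $n\in A_k$ is at least $1/(2\cdot 2^{2^{k-1}})$, giving $\sum_{n\in A_k}1/n\geq 1/2$ and therefore $\sum_{n\in A}1/n=\infty$, i.e.\ $A\notin\I_{1/n}$. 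Now define $\I:=\{B\subseteq\N:B\cap A\text{ is finite}\}$, a proper ideal because $A$ is infinite, with dual filter $\I^*=\{B:A\setminus B\text{ is finite}\}$. Trivially $A\in\I^*$, and $X\subseteq A$ forces $X\in\I^+$, so $A\in\I_X\cap\I^*$ refutes $(2)$. Conversely every $B\in\I^*$ contains $A$ up to finitely many elements, hence $\sum_{n\in B}1/n=\infty$ and $B\notin\I_{1/n}$, confirming $(3)$.

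The main obstacle is the final construction: $A$ must be simultaneously $\I_X$-small and $\I_{1/n}$-large while still meeting $X$ on an infinite set. The doubly-exponential growth of $X$ opens a wide gap between $1/x_k$ and $1/n$ near the start of each $X$-block, and living on the left half of those blocks exploits precisely this gap to reconcile the two demands; once such $A$ is in hand, the principal-type ideal generated by $\N\setminus A$ does the remaining work routinely.
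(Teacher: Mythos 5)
Your proof is correct, and it differs from the paper's in genuinely interesting ways. For $(1)\Rightarrow(2)$ the paper argues indirectly, chaining together the equivalences of Theorem~\ref{thm:salat-toma} and Theorem~\ref{thm:star-ideal} (condition (1) is the Olivier property for all positive sequences, which trivially implies it for $\I^*$-nonincreasing ones, which is condition (2)); you instead give a short combinatorial argument whose key observation is that $f_X$ restricted to $X$ equals $1/n$, so from $A\in\I_X\cap\I^*$ and $X\in\I^+$ the set $B=X\cap A$ lands in $\I_{1/n}\cap\I^+$. This is self-contained and arguably more illuminating, though it loses the "for free" flavour of the paper's derivation. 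Your counterexamples are also different: for $(2)\not\Rightarrow(1)$ you use $\I=\fin$ (the paper uses the summable ideal $\I_{1/\ln(n+1)}$), and both rest on the same fact that every $\I_X$ is a proper ideal, hence disjoint from the dual filter of any sub-ideal; your choice is simpler. For $(3)\not\Rightarrow(2)$ the paper starts from $K=\{\lfloor n\ln(n+1)\rfloor\}$, takes $\I$ generated by the complement of $K$, and finds a sparse subset $A\subseteq K$ with $K\in\I_A$, whereas you fix a doubly-exponential $X$ first and build a set $A\supseteq X$ that is $f_X$-summable but $1/n$-divergent by occupying only the left end of each $X$-block; the two constructions exploit the same gap between $1/x_k$ and $1/n$ at the start of a block, just parametrized differently. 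All verifications in your write-up check out (in particular $|A_k|/x_k=2^{-2^{k-1}}$ and $\sum_{n\in A_k}1/n\geq 1/2$), so the proposal stands as a complete alternative proof.
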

\begin{proof}
$(1)\implies(2)$
Suppose that $\I_{1/n}\cap\I^+=\emptyset $.
Then using Theorems~\ref{thm:salat-toma} and \ref{thm:star-ideal} we obtain 
$ \I_{X}\cap \I^*=\emptyset$ for every $X\in\I^+$.

\smallskip

$(2)\implies(3)$
Suppose that $ \I_{X}\cap \I^*=\emptyset$ for every $X\in\I^+$.
Taking $X=\N$ we get $\I_X=\I_{1/n}$, so 
$\I_{1/n}\cap\I^*=\emptyset$.

\smallskip

$ (2)\;\not\!\!\!\!\implies(1)$
Consider a summable ideal $\I=\I_{1/\ln (n+1)}$. 
Then $\I\subseteq\I_{1/n}$, and since 
%$$\lim_{n\to\infty}\frac{1/\ln(n+1)}{1/n}=\lim_{n\to\infty}\frac{n}{\ln(n+1)}=\infty, $$
$\{n^2:n\in\N\}\in\I_{1/n}\setminus\I$,
we have $\I_{1/n}\cap\I^+\not=\emptyset$. 
On the other hand, since $\I\subseteq \I_{1/n}\subseteq\I_X$ for every $X$, we have $\I_{X}\cap \I^*=\emptyset$ for every $X$.

\smallskip

$ (3)\;\not\!\!\!\!\implies(2)$
Let $k_n=\lfloor n\ln (n+1) \rfloor$ for $n\in\N$ and define $K=\{k_n:n\in\N\}$. Take $\I=\{A\subseteq\N: A\cap K\in\fin\}$. 
Notice that if $A\in\I^*$ then $K \setminus A$ is finite  and
$$\sum_{n\in K} \frac{1}{n}=\sum_{n\in\N}\frac{1}{k_n}\geq \sum_{n\in\N}\frac{1}{n \ln(n+1)}=\infty, $$
thus $K\not\in\I_{1/n}$, hence $\I_{1/n}\cap\I^*=\emptyset$.

Now, we pick the sequence $i_1<i_2<\ldots$ in such a way that $i_n/k_{i_n}<2^{-n}$. We can do it because $\lim_{n\to\infty}n/k_n=0$.
Consider the set $A=\{k_{i_n}:n\in\N\}$. Then $A\in\I^+$ as $A\subseteq K$ and $A$ is infinite.
Moreover,
$$\sum_{k\in K}f_A(k)\leq\sum_{n\in\N}(i_n-i_{n-1})\frac{1}{k_{i_n}} \leq \sum_{n\in\N}\frac{i_n}{k_{i_n}}< \sum_{n\in\N}\frac{1}{2^n} <\infty.$$

Therefore, there is $A\in\I^+$ such that $K\in\I_A\cap\I^*$, thus $\I_A\cap\I^*\not=\emptyset$.
\end{proof}

%
%Next, for $j\in(i_{n-1},i_n]$ we take $a_{k_j}=1/k_{i_n}$ and put $a_n=1/n^2$ for $n\not\in K$. We can see that $a_n$ is nonincreasing on $K\in\I^*$, hence $a_n$ is $\I^*$-nonincreasing. Moreover,
%$$\sum_{n\in\N}a_n \leq \sum_{n\in \N} a_{k_n}+\sum_{n\in\N} \frac{1}{n^2}=\sum_{n\in\N}(i_n-i_{n-1})\frac{1}{k_{i_n}} +\sum_{n\in\N}\frac{1}{n^2}\leq$$
%$$ \sum_{n\in\N}\frac{i_n}{k_{i_n}} +\sum_{n\in\N}\frac{1}{n^2}< \sum_{n\in\N}\frac{1}{2^n} +\sum_{n\in\N}\frac{1}{n^2}<\infty. $$
%
%Consider the set $A=\{k_{i_n}:n\in\N\}$. Then $A\in\I^+$ as $A\subseteq K$ and $A$ is infinite. We can also see that for $n\in A$ we have  
%$$n\cdot a_n=k_{i_j}\cdot a_{k_{i_j}}=k_{i_j}\cdot\frac{1}{k_{i_j}}=1.$$ 
%Therefore, the sequence $(n a_n)$ cannot be $\I^*$-convergent to zero.
%Hence, using Theorem~\ref{thm:star-ideal}, we see that there is $X\notin \I^+$ with $\I_X\cap \I^*\neq \emptyset$.

\begin{corollary}
Let $\I$ be an ideal on $\N$.
In the following list of conditions, each implies the next
and none of the implications revers.
\begin{enumerate}
\item  
The coideal of $\I$ is disjoint from the summable ideal:
$$\I_{1/n}\cap\I^+=\emptyset.$$ 
\item For every $\I^*$-nonincreasing sequence  $(a_n)$ of positive reals we have
$$\sum_{n=1}^{\infty} a_n<\infty \implies   \I^*-\lim n a_n=0.$$

\item  
The filter dual to $\I$ is disjoint from the summable ideal:
$$\I_{1/n}\cap\I^*=\emptyset.$$
\end{enumerate}
\end{corollary}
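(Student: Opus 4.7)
The plan is to reduce the corollary entirely to the already-established Proposition~\ref{prop:necessary-and-sufficient-conditions}. Conditions (1) and (3) of the corollary are verbatim conditions (1) and (3) of the proposition, so only the middle condition requires translation.

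First, I would invoke Theorem~\ref{thm:star-ideal}, which shows that condition (2) of the corollary---the $\I^*$-convergence statement for $\I^*$-nonincreasing sequences---is equivalent to the purely ideal-theoretic statement $\forall X\in\I^+\,(\I_X\cap\I^*=\emptyset)$, namely condition (2) of Proposition~\ref{prop:necessary-and-sufficient-conditions}. Once this identification is made, the two implications $(1)\Rightarrow(2)\Rightarrow(3)$ of the corollary are precisely the two implications proved in the proposition.

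For non-reversibility, the counterexamples constructed in the proof of Proposition~\ref{prop:necessary-and-sufficient-conditions} transfer without modification: the summable ideal $\I=\I_{1/\ln(n+1)}$ witnesses $(2)\not\Rightarrow(1)$, while the ideal $\I=\{A\subseteq\N:A\cap K\in\fin\}$ with $K=\{\lfloor n\ln(n+1)\rfloor:n\in\N\}$ witnesses $(3)\not\Rightarrow(2)$. The only (very minor) obstacle is to notice that since Theorem~\ref{thm:star-ideal} provides a genuine equivalence, non-reversibility at the level of ideal-theoretic conditions transfers intact to non-reversibility at the level of the corollary's statements; beyond this bookkeeping no new argument is needed.
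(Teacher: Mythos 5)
Your proposal is correct and follows exactly the paper's own route: the paper proves this corollary by citing Theorem~\ref{thm:star-ideal} to identify condition (2) with the middle condition of Proposition~\ref{prop:necessary-and-sufficient-conditions}, and then reading off both the implications and the non-reversibility from that proposition. Your write-up just spells out this bookkeeping more explicitly.
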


\begin{proof}
Use Theorem~\ref{thm:star-ideal} along with Proposition~\ref{prop:necessary-and-sufficient-conditions}.
\end{proof}

%%%%%%%%%%%%%%%%%%%%%%%%%%%%%%%%%%%%%%%%%%%%%%%%%%%%%%%%%%%%%%%%%%%%%%%%%

\section{Algebraic structures in families of sequences related to Olivier's theorem}
\label{sec:algebraic-structures}

The following proposition gives a necessary and sufficient conditions under which the families $AOS(\I)$, $AOS(\I^+)$ and $AOS(\I^+)$ are nonempty.

\begin{proposition}
\label{prop:AOSs-nonempty}
Let $\I$ be an ideal on $\N$.
\begin{enumerate}
    \item The following conditions are equivalent.
    \begin{enumerate}
        \item $AOS(\I)\neq\emptyset$.
        \item $AOS(\I^*)\neq\emptyset$.
        \item $ \I^+\cap \I_{1/n}\neq\emptyset$.
    \end{enumerate}
    \item The following conditions are equivalent.
    \begin{enumerate}
        \item $AOS(\I^+)\neq\emptyset$.
        \item $ \I^*\cap \I_{1/n}\neq\emptyset$.
    \end{enumerate}
    \end{enumerate}
\end{proposition}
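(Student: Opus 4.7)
The plan is to deduce both parts directly from Corollary~\ref{cor:Salat-Toma} (for part (1)) and Corollary~\ref{cor:square-plus} (for part (2)) by contrapositive, after a small reduction to non-negative sequences.

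First I would note the following reduction. For any of the convergence notions $\J \in \{\I, \I^*, \I^+\}$ and any $\varepsilon>0$, the set $\{n : |na_n|\geq\varepsilon\}$ equals $\{n : n|a_n|\geq\varepsilon\}$, so $\J\text{-}\lim na_n=0$ if and only if $\J\text{-}\lim n|a_n|=0$. Combined with $(a_n)\in\ell_1 \iff (|a_n|)\in\ell_1$, this shows that $AOS(\J)\neq\emptyset$ if and only if there exists a non-negative sequence $(a_n)$ with $\sum_{n=1}^\infty a_n<\infty$ whose sequence $(na_n)$ is not $\J$-convergent to zero. This reduction lets us apply the earlier corollaries, which are stated for non-negative sequences.

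For part (1), I would apply Corollary~\ref{cor:Salat-Toma}, which asserts the equivalence of: every non-negative summable sequence $(a_n)$ satisfies $\I\text{-}\lim na_n=0$; every such sequence satisfies $\I^*\text{-}\lim na_n=0$; and $\I_{1/n}\subseteq\I$. Negating these three equivalent conditions gives, by the reduction above, the equivalences (a) $AOS(\I)\neq\emptyset$, (b) $AOS(\I^*)\neq\emptyset$, and $\I_{1/n}\not\subseteq\I$. It remains to note that $\I_{1/n}\not\subseteq\I$ precisely means that some $A\in\I_{1/n}$ fails to lie in $\I$, i.e.\ lies in $\I^+$; this is exactly $\I^+\cap\I_{1/n}\neq\emptyset$, proving (c).

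For part (2), I would apply Corollary~\ref{cor:square-plus} in the same way: it states that every non-negative summable sequence $(a_n)$ satisfies $\I^+\text{-}\lim na_n=0$ if and only if $\I_{1/n}\cap\I^*=\emptyset$. Negating and using the reduction yields $AOS(\I^+)\neq\emptyset \iff \I_{1/n}\cap\I^*\neq\emptyset$. There is no real obstacle here; the only thing requiring a moment of care is the passage between signed sequences in $\ell_1$ and the non-negative sequences appearing in the hypotheses of the corollaries, which is handled by the opening reduction.
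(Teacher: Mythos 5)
Your proof is correct and matches the paper's approach exactly: the paper's proof of this proposition is just the two lines ``(1) It follows from Corollary~\ref{cor:Salat-Toma}. (2) It follows from Corollary~\ref{cor:square-plus}.'' Your additional reduction from signed $\ell_1$ sequences to non-negative ones via $|a_n|$ is the same observation the authors make in the introduction and is the right way to fill in that implicit step.
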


\begin{proof}
(1) It follows from Corollary~\ref{cor:Salat-Toma}.
(2) It follows from Corollary~\ref{cor:square-plus}.
\end{proof}

Since $\I^*$-convergence implies both $\I$-convergence and $\I^+$-convergence,  
$AOS(\I)\subseteq AOS(\I^*)$ and $AOS(\I^+)\subseteq AOS(\I^*)$. 
Below, we show that, in general,  these inclusions do not reverse, and there is no inclusions between $AOS(\I)$ and $AOS(\I^+)$.

\begin{proposition}
\label{prop:relations-between-AOSs}
Let $\I$ be an ideal on $\N$.
\begin{enumerate}
    \item 
If $\I^+\cap \I_{1/n}\neq\emptyset$ and  $\I^*\cap \I_{1/n}=\emptyset$, then $AOS(\I)\not\subseteq AOS(\I^+)$ and $AOS(\I^*)\not\subseteq AOS(\I^+)$.

\item Every ideal $\I$ which is strictly contained in $\I_{1/n}$ (e.g.~$\I=\fin$) satisfies assumptions of item (1).

\item If $\I$ is not a weak P-ideal and $\I^*\cap \I_{1/n}\neq \emptyset$, then
$AOS(\I^+)\not\subseteq AOS(\I)$.

\item If $\I$ is not a P-ideal and $\I^*\cap \I_{1/n}\neq \emptyset$, then
$AOS(\I^*)\not\subseteq AOS(\I)$.

\item There exists an  ideal $\I$ which satisfies the assumptions of items (3) and (4).

\end{enumerate}
\end{proposition}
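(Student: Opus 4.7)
The proof splits into five items. For (1), I would take any $A\in \I^+\cap \I_{1/n}$ (provided by hypothesis, via Proposition~\ref{prop:AOSs-nonempty}) and set $a_n=1/n$ for $n\in A$ and $a_n=1/n^2$ otherwise. Then $\sum_n|a_n|<\infty$ because $A\in\I_{1/n}$ and $\sum 1/n^2<\infty$. Since $na_n=1$ on $A\in\I^+$, the set $\{n:|na_n|\geq 1/2\}\supseteq A$ is not in $\I$, so $(a_n)\in AOS(\I)\subseteq AOS(\I^*)$. The hypothesis $\I^*\cap\I_{1/n}=\emptyset$ together with $A\in\I_{1/n}$ forces $A\notin \I^*$, so $\N\setminus A\in\I^+$, and $na_n=1/n\to 0$ along $\N\setminus A$ gives $(a_n)\notin AOS(\I^+)$. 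Item (2) is then immediate: if $\I\subsetneq\I_{1/n}$, any $A\in\I_{1/n}\setminus\I$ lies in $\I^+\cap\I_{1/n}$; conversely, if some $F\in\I^*\cap\I_{1/n}$ existed, then $\N\setminus F\in\I\subseteq\I_{1/n}$ would force $\N\in\I_{1/n}$, contradicting $\sum 1/n=\infty$.

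For (3) and (4) a single construction works. The failure of the weak-P (resp.\ P) property yields a family $\{A_k\}_{k\in\N}\subseteq\I$ such that for every $B\in \I^+$ (resp.\ $B\in\I^*$) some $B\cap A_k$ is infinite. Fixing $F\in \I^*\cap \I_{1/n}$ and replacing each $A_k$ by $(A_1\cup\ldots\cup A_k)\cap F$, I may assume $A_k\subseteq A_{k+1}\subseteq F$; the meeting property survives because $B\cap F$ stays in $\I^+$ for $B\in\I^+$ and in $\I^*$ for $B\in \I^*$. Set $E_1=A_1$, $E_k=A_k\setminus A_{k-1}$ for $k\geq 2$, and
$$a_n=\begin{cases}1/(n\cdot 2^k) & \text{if } n\in E_k,\\ 0 & \text{otherwise}.\end{cases}$$
Then $\sum_n a_n\leq \bigl(\sum_{n\in F}1/n\bigr)\bigl(\sum_{k\in\N}2^{-k}\bigr)<\infty$. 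Since $na_n=2^{-k}$ on $E_k$, the level set $\{n:|na_n|\geq\varepsilon\}$ is contained in $A_K$ for suitable $K$ and hence in $\I$, so $(na_n)$ is $\I$-convergent to zero and $(a_n)\notin AOS(\I)$. Conversely, for $B\in\I^+$ (resp.\ $\I^*$) pick $k$ with $B\cap A_k$ infinite; by pigeonholing some $B\cap E_j$ is infinite for $j\leq k$, so $(na_n)_{n\in B}$ contains infinitely many copies of $2^{-j}$ and cannot converge to zero. Thus $(a_n)\in AOS(\I^+)$ (resp.\ $AOS(\I^*)$).

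For (5), fix $F=\{2^k:k\in\N\}$ and a partition $F=\bigsqcup_{n\in\N} F_n$ into infinite pieces, and define
$$\I=\{A\subseteq\N: A\cap F_n\text{ is finite for all but finitely many }n\}.$$
One checks at once that $\I$ is an ideal with $\N\notin\I$. The family $\{F_n\}_{n\in\N}\subseteq\I$ witnesses the failure of the weak-P property, since every $B\notin\I$ meets infinitely many $F_n$ in infinite sets; in particular $\I$ is not a P-ideal. Moreover $(\N\setminus F)\cap F_n=\emptyset$ yields $\N\setminus F\in\I$ and hence $F\in\I^*\cap\I_{1/n}$. The main technical point throughout is in (3) and (4): one has to verify carefully that intersecting with $F$ and taking cumulative unions keeps the family inside $\I$ while preserving the hypothesised meeting property, and that summability of $(a_n)$ follows from the uniform bound $\sum_{n\in A_k}1/n\leq\sum_{n\in F}1/n<\infty$. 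Once these reductions are set up, the remaining verifications are routine.
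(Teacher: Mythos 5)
Your proof is correct and follows essentially the same strategy as the paper: item (1) via a witness in $\I^+\cap\I_{1/n}$ together with the emptiness of the relevant $AOS(\I^+)$ data, items (3)--(4) via a countable family in $\I$ witnessing the failure of the (weak) P-property, rearranged inside a set from $\I^*\cap\I_{1/n}$ and weighted so that $(na_n)$ is $\I$-null but fails to converge along every $B\in\I^+$ (resp.\ $\I^*$), and item (5) via the same ``finite intersection with all but finitely many pieces of a partition of a thin set'' ideal. The only differences are cosmetic (geometric weights $2^{-k}$ on a cumulative chain versus the paper's harmonic weights $1/i$ on a disjoint family, and zero padding versus $1/n^2$).
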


\begin{proof}
(1)
By Proposition~\ref{prop:AOSs-nonempty}, we have  $AOS(\I)\neq\emptyset$, $AOS(\I^*)\neq\emptyset$ and $AOS(\I^+)=\emptyset$. 

(2) It is obvious.

(3)
Take $A\in\I^*\cap\I_{1/n}$. Let sets $A_1,A_2\ldots\in\I$ be pairwise disjoint such that for any $B\not\in\I$ there is $n\in\N$ with $B\cap A_n\not\in\fin$. We may assume that  $\bigcup_{n=1}^\infty A_n=A$. 
Indeed, if that is not the case then enumerate $A\setminus \bigcup_{n=1}^\infty A_n$ by $(x_i)$ (for either $i\in\N$ or $i\leq N$ depending on whether this difference is infinite or not) and for each $n\in\N$ put $A_n'=(A_n\cap A)\cup \{x_n\}$ (or $A'_n=A_n \cap A$ in case $x_n$ is not defined). Then clearly $A'_n\in\I$ for every $n\in\N$ and $\bigcup_{n=1}^\infty A'_n=A$.  
We define 

$$a_n=\begin{cases} 
\frac{1}{n i} & \textrm{ for } n\in A_i,\\
\frac{1}{n^2} & \textrm{ for other } n. 
\end{cases}$$

Obviously  
$$\sum_{n=1}^\infty a_n\leq \sum_{n=1}^\infty \frac{1}{n^2} +\sum_{n\in A} \frac{1}{n}<\infty.  $$

Moreover, we can notice that for every $k\in\N$ we have
$$\left\{n\in\N:n a_n\geq \frac{1}{k}\right\}\subseteq (\N\setminus A)\cup \bigcup_{i=1}^k A_i\in\I,$$
thus $\I-\lim n a_n=0,$ hence $(a_n)\not\in AOS(\I)$.

On the other hand for every $B\not\in\I$ there is $k\in\N$ with $B\cap A_k\not\in\fin$, thus there are infinitely many $n\in B$ such that $n a_n=1/k$, hence $(n a_n)_{n\in B}$ cannot be convergent to $0$. It follows that $(a_n)\in AOS(\I^+)$.

(4)
The obvious modification of the proof of item (3)  gives the proof of item (4).

(5) 
Take an infinite set  $A\in \I_{1/n}$.
Let $\{A_n : n\in \N\}$ be an infinite partition of $A$ into infinite sets.
We define an ideal $\I$ by
$$B\in \I \iff B\cap A_n\in \fin \text{\ \ for all but finitely many $n\in \N$}.$$

Then $\I$ is not a weak P-ideal (so also not a P-ideal) and $A\in \I^*\cap \I_{1/n}$.
\end{proof}

%%%%%%%%%%%%%%%%%%%%%%%%%%%%%%%%%%%%%%%%%%%%%%%%%%%%%%%%%%%%%%%%%%%%%%%%%

\subsection{Lineability}
\label{subsec:lineability}

\begin{theorem}
\label{the:lineability}
Let $\I$ be an ideal on $\N$.
\begin{enumerate}
    \item
    The following conditions are equivalent.
\begin{enumerate}
    \item $AOS(\I)\neq\emptyset$.
    \item $AOS(\I^*)\neq\emptyset$.
    \item $AOS(\I)$ is $\mathfrak{c}$-lineable.
    \item $AOS(\I^*)$ is $\mathfrak{c}$-lineable.
    \item $\I_{1/n}\cap \I^+\neq\emptyset$.
\end{enumerate}
\item 
The following conditions are equivalent.
\begin{enumerate}
    \item $AOS(\I^+)\neq\emptyset$.
    \item $AOS(\I^+)$ is $\mathfrak{c}$-lineable.
    \item $\I_{1/n}\cap \I^*\neq\emptyset$.
\end{enumerate}
\end{enumerate}
\end{theorem}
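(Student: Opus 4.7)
The equivalences (a)$\Leftrightarrow$(b)$\Leftrightarrow$(e) in (1) and (a)$\Leftrightarrow$(c) in (2) are Proposition~\ref{prop:AOSs-nonempty}, and the implications ``$\mathfrak{c}$-lineable $\Rightarrow$ nonempty'' are trivial. So the only remaining content is to show that the arithmetic conditions on $\I$ imply the corresponding $\mathfrak{c}$-lineabilities. Since $AOS(\I)\subseteq AOS(\I^*)$, a single witnessing subspace will handle both (c) and (d) in (1).

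My plan is to use one common construction driven only by the summability of the witness. Fix $A=\{a_1<a_2<\cdots\}$ satisfying $A\in \I_{1/n}$ together with the appropriate extra condition ($A\in\I^+$ for (1), $A\in\I^*$ for (2)). Set $\alpha_k=1/a_k$ and $T_k=\sum_{j\geq k}\alpha_j$, so that $(T_k)$ is strictly decreasing to $0$ because $A\in\I_{1/n}$. For each $r\in(0,1)$ define $x_r:\N\to\R$ by $x_r(a_k)=T_k^{-r}/a_k$ and $x_r(n)=0$ for $n\notin A$.

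I would then verify three items. First, $x_r\in\ell_1$, via the telescoping comparison $\|x_r\|_1=\sum_k T_k^{-r}(T_k-T_{k+1})\leq \int_0^{T_1}t^{-r}\,dt=T_1^{1-r}/(1-r)<\infty$. Second, $\{x_r:r\in(0,1)\}$ is linearly independent in $\ell_1$, because for any distinct $r_1>\cdots>r_m$ with $c_1\neq 0$ the quantity $\sum_i c_iT_k^{-r_i}=T_k^{-r_1}\bigl(c_1+o(1)\bigr)$ is eventually nonzero as $k\to\infty$. Third, for any nonzero $v=\sum_i c_ix_{r_i}$ with $r_1$ the largest exponent having $c_1\neq 0$, one has $n\cdot v(n)=\sum_i c_iT_k^{-r_i}$ at $n=a_k$, so $|n\cdot v(n)|\to\infty$ along $A$ and hence $S:=\{n:n|v(n)|\geq 1\}$ contains a cofinite subset of $A$.

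The conclusion then splits by case. In (1), since $A\in\I^+$ and $\I^+$ is closed under finite modifications and upward closed, $S\in\I^+$, whence $(nv(n))$ is not $\I$-convergent to zero and $v\in AOS(\I)\subseteq AOS(\I^*)$. In (2), $A\in\I^*$ gives $S\in\I^*$; then for every $B\in\I^+$ the set $B\cap A\in\I^+$ is infinite, and $n|v(n)|\geq 1$ on this intersection outside a finite set, so $(nv(n))_{n\in B}$ does not tend to zero and thus $v\in AOS(\I^+)$. I expect the main technical point to be the integral estimate for $\|x_r\|_1$; the linear-algebraic and ideal-theoretic steps are then routine.
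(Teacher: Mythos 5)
Your proof is correct. The reduction to ``(1e)$\implies$(1c)'' and ``(2c)$\implies$(2b)'' via Proposition~\ref{prop:AOSs-nonempty} and the inclusion $AOS(\I)\subseteq AOS(\I^*)$ is exactly what the paper does, and your overall strategy is the same as the paper's: build a continuum-sized, linearly independent family of $\ell_1$-sequences supported on the witness $A\in\I_{1/n}\cap\I^+$ (resp.\ $\I^*$) so that along $A$ the quantity $n\,x(n)$ grows like a power of a parameter tending to infinity, and then handle an arbitrary linear combination by factoring out the dominant exponent. The implementation differs, though: the paper chops $A$ into finite blocks $A_k=A\cap(j_k,j_{k+1}]$ chosen so that $\sum_k k\sum_{n\in A_k}1/n<\infty$ and uses the weights $k^\alpha$ on the $k$-th block, whereas you weight the $k$-th element of $A$ directly by $T_k^{-r}$ with $T_k$ the tail sum $\sum_{j\ge k}1/a_j$, and prove summability by the telescoping/integral comparison $\sum_k T_k^{-r}(T_k-T_{k+1})\le\int_0^{T_1}t^{-r}\,dt<\infty$. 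Your version avoids the block decomposition and the auxiliary sequence $(j_k)$, at the cost of the (elementary) integral estimate; the paper's version keeps everything combinatorial. Both yield the same ideal-theoretic endgame: in case (1) the set $\{n:n|v(n)|\ge 1\}$ contains a cofinite subset of $A\in\I^+$, and in case (2) it is in $\I^*$, so every $B\in\I^+$ meets it in an infinite set. All the small points you rely on (strict monotonicity of $T_k$, $T_k\to 0$, $B\cap A\in\I^+$ when $A\in\I^*$, upward closure and finite-modification invariance of $\I^+$) are valid, so I see no gap.
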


\begin{proof}
The equivalence of (1a), (1b) and (1e) follows from Proposition~\ref{prop:AOSs-nonempty}. 
The implications (1c)$\implies$(1a) and (1d)$\implies$(1b) are obvious. 
By $AOS(\I)\subseteq AOS(\I^*)$ we get (1c)$\implies$(1d). Thus, it suffices to show (1e)$\implies$(1c).

The equivalence of  (2a) and (2c) follows from Proposition~\ref{prop:AOSs-nonempty}. The implication (2b)$\implies$(2a) is obvious. Thus, it suffices to show (2c)$\implies$(2b).

Below we prove that (1e)$\implies$(1c) ((2c)$\implies$(2b), resp.).

Assume that there is some $A\in\I_{1/n}\cap \I^+$ ($A\in\I_{1/n}\cap \I^*$, resp.). 

Since $\sum_{n\in A}1/n<\infty$, we can find  an increasing sequence $(j_k)$ of integers such that 
$$\sum_{n>j_k,n\in A}\frac{1}{n}<\frac {1}{k2^k}.$$ 
We put $A_0=(\N\setminus A)\cup[1,j_1]\cap \N$ and $A_k=A\cap(j_{k},j_{k+1}]$ for each $k\geq 1$. 
Observe that $(A_k)$ is a partition of $\N$ and $A_k\in\I$ for $k\geq 1$, while $A_0\notin\I^*$ ($A_k\in\I$ for all $k$, resp.). Moreover, 
$$\sum_{k=0}^\infty \left(k\sum_{n\in A_k}\frac{1}{n}\right)<\infty.$$

 For each $\alpha\in (0,1)$ let $x^{(\alpha)}$ be a sequence given by $$x^{(\alpha)}(n)=\frac{k^\alpha}{n} \iff n\in A_k.$$

Note that $x^{(\alpha)}\in \ell_1$ for each $\alpha\in(0,1)$ as
$$\sum_{n=1}^\infty x^{(\alpha)}(n)=\sum_{k=0}^\infty \left(k^\alpha\sum_{n\in A_k}\frac{1}{n}\right)\leq \sum_{k=0}^\infty \left(k\sum_{n\in A_k}\frac{1}{n}\right)<\infty.$$

In order to show $\mathfrak{c}$-lineability of $AOS(\I)$ ($AOS(\I^+), resp.)$, consider a linear combination 
$$y=c_1 x^{(\alpha_1)}+\ldots+c_m x^{(\alpha_m)},$$ 
where $c_i\in\R\setminus\{0\}$ for $i\leq m$ and $\alpha_1>\ldots>\alpha_m$. We need to show that $y\in AOS(\I)$ ($y\in AOS(\I^+)$, resp.).

Obviously, $y\in\ell_1$ as a linear combination of $\ell_1$-sequences. 

Observe that for each $n\in A_k$ with $k\geq 1$ we have
\begin{equation*}
    \begin{split}
|n y(n)|
&=
\left|n  \sum_{i=1}^m c_i\cdot \frac{k^{\alpha_i}}{n}\right|
=
\left|\sum_{i=1}^m c_i k^{\alpha_i}\right|
=
|c_1k^{\alpha_1}|\cdot \left|1+ \sum_{i=2}^m \frac{c_i}{c_1}\cdot k^{\alpha_i-\alpha_1}\right|
\\&\geq
|c_1k^{\alpha_1}|\cdot \left(1- \left|\sum_{i=2}^m \frac{c_i}{c_1}\cdot k^{\alpha_i-\alpha_1}\right|\right)
\xrightarrow{k\to\infty}\infty,
    \end{split}
\end{equation*}
as 
$$\lim_{k\to\infty} k^{\alpha_1}=\infty \text{ \ and \ } 
\lim_{k\to\infty} k^{\alpha_i-\alpha_1}=\lim_k\frac{1}{k^{\alpha_1-\alpha_i}}=0 \text{\ for all\ } i=2,\ldots,m.$$

To show that $y\in AOS(\I)$, find $k_0$ such that $|n y(n)|\geq 1$ for all $n\in\bigcup_{k\geq k_0}A_k$ and note that $\bigcup_{k\geq k_0}A_k\notin\I$ as $A_0\notin\I^*$ and $A_k\in\I$ for $k\geq 1$. Hence, $y\in AOS(\I)$.

To show that $y\in AOS(\I^+)$, fix any $B\in\I^+$. Since $A_k\in\I$ for all $k\in\N\cup\{0\}$, the set $\{k\in\N:\ B\cap A_k\neq\emptyset\}$ has to be infinite. Thus, $(|n y(n)|)_{n\in B}$ contains a subsequence  $(|n y(n)|)_{n\in B\setminus A_0}$ which is divergent to infinity. Hence, $y\in AOS(\I^+)$.
\end{proof}

%%%%%%%%%%%%%%%%%%%%%%%%%%%%%%%%%%%%%%%%%%%%%%%%%%%%%%%%%%%%%%%%%%%%%%%%%

\subsection{Spaceability}
\label{subsec:spaceability}

For any $x\in \ell_1$, we write 
$$\lVert x\rVert = \sum_{n=1}^\infty |x(n)| \text{\ \ and \ \ } \nosnik(x) = \{n\in \N:x(n)\neq 0\}.$$

For an ideal $\I$ and a set $C\in \I^+$, we define an ideal $\I\restriction C$ on the set $C$ by 
$$\I\restriction C = \{A\cap C:A\in \I\}.$$

\begin{theorem}
\label{thm:spaceability-1}
Let $\I$ be an ideal on $\N$ such that 
$\I\restriction C$ is not a maximal ideal for every $C\in \I^+$ (i.e.~every set from $\I^+$ can be divided into two disjoint sets from $\I^+$). 
Then the following conditions are equivalent.
\begin{enumerate}
    \item $AOS(\I)\neq\emptyset$.
    \item $AOS(\I^*)\neq\emptyset$.
    \item $AOS(\I)$ is $\mathfrak{c}$-lineable.
    \item $AOS(\I^*)$ is $\mathfrak{c}$-lineable.
    \item $AOS(\I)$ is spaceable.
    \item $AOS(\I^*)$ is spaceable.
    \item $\I_{1/n}\cap \I^+\neq \emptyset$.
\end{enumerate}
\end{theorem}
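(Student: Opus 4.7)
The plan is to reduce the theorem to the single new implication $(7)\implies(5)$, since Theorem~\ref{the:lineability}(1) already establishes the equivalences $(1)\iff(2)\iff(3)\iff(4)\iff(7)$ (with no hypothesis on $\I$ needed). The implications $(5)\implies(1)$ and $(6)\implies(2)$ are immediate from the definition of spaceability, and the inclusion $AOS(\I)\subseteq AOS(\I^*)$ noted earlier yields $(5)\implies(6)$: the same closed subspace witnesses spaceability for the larger set. Hence, once $(7)\implies(5)$ is proved, the chain $(6)\implies(2)\iff(7)\implies(5)$ closes everything.

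For $(7)\implies(5)$, I start with a set $A\in \I_{1/n}\cap \I^+$. The assumption on $\I$ says exactly that every $C\in \I^+$ admits a partition $C=C_1\sqcup C_2$ with $C_1,C_2\in \I^+$ (both halves lying in the coideal of $\I$). Applying this iteratively, with $B_0=A$ and, at step $k\geq 1$, splitting the leftover $B_{k-1}=A_k\sqcup B_k$ so that both pieces are in $\I^+$, I obtain pairwise disjoint sets $A_1,A_2,\ldots\in \I^+$, all contained in $A$.

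For each $k$ set $s_k=\sum_{n\in A_k}1/n$; this is finite because $A_k\subseteq A\in\I_{1/n}$, and strictly positive because $A_k$ is infinite (as $\fin\subseteq\I$). Define $x_k\in \ell_1$ supported on $A_k$ by
$$x_k(n)=\frac{1}{n s_k}\ \text{ for } n\in A_k,\quad x_k(n)=0 \ \text{ otherwise,}$$
so that $\|x_k\|_1=1$ and $\nosnik(x_k)=A_k$. Since the $x_k$ are unit vectors with pairwise disjoint supports, the closed linear span $V$ of $\{x_k:k\in\N\}$ in $\ell_1$ is isometrically isomorphic to $\ell_1$: each $y\in V$ has a unique representation $y=\sum_{k=1}^\infty c_k x_k$ with $(c_k)\in\ell_1$ and $\|y\|_1=\sum_{k=1}^\infty |c_k|$. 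In particular $V$ is an infinite-dimensional closed subspace of $\ell_1$.

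Finally I would verify $V\setminus\{0\}\subseteq AOS(\I)$. Given nonzero $y=\sum_k c_k x_k\in V$, pick $k_0$ with $c_{k_0}\neq 0$. Then for every $n\in A_{k_0}$ one computes $n y(n)=n\cdot c_{k_0}/(n s_{k_0})=c_{k_0}/s_{k_0}$, a nonzero constant. Setting $\varepsilon=|c_{k_0}|/s_{k_0}>0$, the set $\{n\in\N:|n y(n)|\geq \varepsilon\}$ contains $A_{k_0}\notin\I$, so $(n y(n))$ fails to $\I$-converge to $0$ and thus $y\in AOS(\I)$. The only real obstacle is the iterated splitting step, which is precisely what the hypothesis on $\I$ is tailored to deliver; the rest is the standard disjoint-supports spaceability argument in $\ell_1$.
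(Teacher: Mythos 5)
Your proof is correct and follows essentially the same route as the paper's: both reduce everything to the one nontrivial implication and then use the splitting hypothesis to produce countably many pairwise disjoint $\I$-positive sets carrying disjointly supported unit vectors whose closed linear span lies in $AOS(\I)\cup\{0\}$. The only (cosmetic) difference is that you build the vectors from the sequence $(1/n)$ on a set $A\in\I_{1/n}\cap\I^+$, so that $ny(n)$ is constant on each block, whereas the paper starts from an arbitrary $(b_n)\in AOS(\I)$ and works on the set where $nb_n\geq\varepsilon$.
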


\begin{proof}
The equivalence of (1), (2), (3), (4) and (7) is due to Theorem~\ref{the:lineability}.
It is known (\cite[Theorem I-1]{MR12204}, see also \cite{MR320717} or \cite{MR1825451}) that every infinite-dimensional Banach space has dimension at least $\continuum$, so we obtain the implications (6)$\implies$(4) and (5)$\implies$(3). 
By $AOS(\I)\subseteq AOS(\I^*)$ we get (5)$\implies$(6). 
Thus, it suffices to show (1)$\implies$(5).

Let $(b_n)\in AOS(\I)$.
Without loss of generality, we can assume that $\lVert (b_n)\rVert=1$ and $b_n\geq 0$ for each $n\in \N$. 
Then there exists $\varepsilon>0$ such that 
$$C = \{n\in \N: nb_n\geq \varepsilon\}\notin\I,$$
and, consequently, there exist 
pairwise disjoint sets $D_n\notin \I$  such that $D_n\subseteq C$ for each $n\in \N$.

For each $i,n\in \N$, we define 
$$x^{(i)}(n) =\begin{cases}
b_n&\text{if $n\in D_i\setminus\{\min D_i\}$,}\\
\displaystyle 1-\sum_{n\in D_i\setminus\{\min D_i\}}b_n&\text{if $n=\min D_i$,}\\
0&\text{otherwise.}
\end{cases}$$
Then  $\lVert x^{(i)}\rVert=1$, $\nosnik(x^{(i)}) = D_i$ and $\nosnik(x^{(i)})\cap \nosnik(x^{(j)})=\emptyset$ for each $i,j\in \N$, $i\neq j$.
Thus  
$$V=\left\{\sum_{i=1}^\infty t_i x^{(i)}:\ (t_i)\in\ell_1\right\}$$ 
is a closed subspace of infinite dimension in $\ell_1$. Hence, we only need to show that $V\subseteq AOS(\I)\cup\{0\}$. 

Let $(t_i)\in\ell_1$. If $t_i=0$ for all $i\in\N$ then obviously $\sum_{i=1}^\infty t_i x^{(i)}\in AOS(\I)\cup\{0\}$. 
Suppose that $t_{i_0}\neq 0$ for some $i_0\in \N$.
Then for any $n\in D_{i_0}\setminus\{\min D_{i_0}\} $  we have
\begin{equation*}
    \begin{split}
\left|n  \left(\sum_{i=1}^\infty t_i x^{(i)}(n)\right)\right| 
&=
\left|n  t_{i_0}  b_n\right|
\geq 
\left| t_{i_0} \varepsilon\right|>0.
    \end{split}
\end{equation*}
Since 
$D_{i_0} \notin\I$,
we obtain that the sequence 
$\left(n \left(\sum_{i=1}^\infty t_i x^{(i)}(n)\right)\right)_n$
is not $\I$-convergent to zero, hence it belongs to $AOS(\I)$.
\end{proof}

By identifying sets of natural numbers with their characteristic functions,
we equip $\cP(\N)$ with the topology of the Cantor space $\{0,1\}^\N$ (with the product measure of a countable sequence of the uniform measures on each $\{0,1\}$, resp.) and therefore
we can assign topological (measure-theoretic) notations to ideals on $\N$.
In particular, an ideal $\I$ has the \emph{Baire property} (is \emph{Lebesgue measurable} or is \emph{Borel}, resp.) if $\I$ has the Baire property  (is Lebesgue measurable or is a Borel set, resp.) as a subset of $\{0,1\}^\N$. 
For instance, summable ideals are Borel (even $F_\sigma$) ideals.

%%% For an ideal $\I$ and a set $C\in\I^+$, by $\I\restriction C$ 
%%% we mean the ideal $\{A\subseteq \N: A\cap C\in \I \}$.

We say that an ideal $\I$ has the \emph{hereditary Baire property} (is \emph{hereditary Lebesgue measurable}, resp.) if $\I\restriction C$ has the Baire property (is Lebesgue measurable, resp.) for every $C\in \I^+$. (Using \cite[Proposition~2.1]{MR3624786}, one can construct an ideal with the Baire property which does not have the hereditary Baire property).
On the other hand, there is no use defining the hereditary Borel ideals because it is known that if $\I$ is a Borel ideal then $\I\restriction C$ is a Borel ideal for every $C\in\I^+$ (see for instance the proof of \cite[Theorem~3.13]{MR4358610}).
Consequently, Borel ideals have the hereditary Baire property as well as they are hereditary Lebesgue'a measurable.

\begin{theorem}
\label{thm:spaceability-1-Borel}
Let $\I$ be an ideal on $\N$ which has the hereditary Baire property or is hereditary Lebesgue measurable (in particular, if it is a Borel ideal).
Then the following conditions are equivalent.
\begin{enumerate}
    \item $AOS(\I)\neq\emptyset$.
    \item $AOS(\I^*)\neq\emptyset$.
    \item $AOS(\I)$ is $\mathfrak{c}$-lineable.
    \item $AOS(\I^*)$ is $\mathfrak{c}$-lineable.
    \item $AOS(\I)$ is spaceable.
    \item $AOS(\I^*)$ is spaceable.
    \item $\I_{1/n}\cap \I^+\neq \emptyset$.
\end{enumerate}
\end{theorem}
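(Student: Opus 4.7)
The plan is to reduce this theorem directly to Theorem~\ref{thm:spaceability-1}, whose list of equivalent conditions is identical. The only nontrivial task is to verify that under either regularity hypothesis on $\I$ --- hereditary Baire property or hereditary Lebesgue measurability --- the restriction $\I\restriction C$ fails to be a maximal ideal for every $C\in \I^+$. The Borel case then comes for free from the observation preceding the statement: restrictions of Borel ideals to positive sets are again Borel, so Borel ideals automatically enjoy both hereditary regularity conditions and collapse to the general case.

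The key input will be the classical Sierpi\'nski-type dichotomy: a non-principal ultrafilter on a countably infinite set, viewed as a subset of the Cantor space, neither has the Baire property nor is Lebesgue measurable. Dually, a maximal ideal extending $\fin$ on a countably infinite set has neither of these regularity properties. I will apply this dual formulation to $\I\restriction C$. Since $C\in \I^+$ is infinite and $\I\restriction C$ contains all finite subsets of $C$, maximality of $\I\restriction C$ would produce a non-principal maximal ideal on $C$ that has the Baire property (respectively, is Lebesgue measurable), contradicting the cited theorem.

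With the hypothesis ``$\I\restriction C$ is not a maximal ideal for any $C\in \I^+$'' established, the equivalences $(1)$--$(7)$ follow at once by invoking Theorem~\ref{thm:spaceability-1}. I do not anticipate a genuine obstacle. The only mildly delicate point is ensuring the Sierpi\'nski dichotomy is applied on the correct countable index set $C$ rather than on $\N$ itself, but this is transparent because both Baire category and the coin-tossing measure on $\{0,1\}^C$ are defined intrinsically, so the cited classical result transfers verbatim from $\N$ to any countably infinite $C$.
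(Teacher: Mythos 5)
Your argument is correct and is essentially the paper's own proof: the authors likewise deduce that $\I\restriction C$ cannot be maximal for any $C\in\I^+$ from the classical fact that maximal ideals have neither the Baire property nor Lebesgue measurability, and then invoke Theorem~\ref{thm:spaceability-1}. Your extra remark about transferring the dichotomy from $\N$ to an arbitrary countably infinite $C$ is a sound (and implicit in the paper) detail.
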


\begin{proof}
Let $\I$ be an ideal with the hereditary Baire property (hereditary Lebesgue measurable, resp.).
Since a maximal ideal does not have the Baire property and is not Lebesgue measurable (see e.g.~\cite[{Theorem~4.1.1}]{MR1350295}), we obtain that $\I\restriction C$
is not a maximal ideal  for any $C\in \I^+$.
Thus, Theorem~\ref{thm:spaceability-1} finishes the proof.
\end{proof}

An ideal $\I$ on $\N$ is called \emph{tall} if for every infinite $A\subseteq\N$ there is an infinite $B\subseteq A$ such that $B\in \I$.

The assumption of Theorem~\ref{thm:spaceability-1} is not satisfied for some non-tall ideals. Below, we provide an additional result (see Theorem~\ref{thm:spaceability-2}) which for instance guarantees that $AOS(\I)$ is spaceable for every non-tall ideal (see Corollary~\ref{cor:spaceability-2-non-tall}).

By $e_D:\N\to D$  we denote the increasing enumeration of a set $D\subseteq \N$.

\begin{theorem}
\label{thm:spaceability-2}
If  $\I$ is  an ideal on $\N$ such that 
there exist pairwise disjoint sets $D_n\in \I^+$, $n\in \N$, and a set $C\in \I^+\cap \I_{1/n}$ such that 
$$\{e_{D_n}(i):i\in C\}\in\I^+$$
for each $n\in \N$, 
then 
\begin{enumerate}
    \item $AOS(\I)\neq\emptyset$,
    \item $AOS(\I^*)\neq\emptyset$,
    \item $AOS(\I)$ is $\mathfrak{c}$-lineable,
    \item $AOS(\I^*)$ is $\mathfrak{c}$-lineable,
    \item $AOS(\I)$ is spaceable,
    \item $AOS(\I^*)$ is spaceable.
\end{enumerate}
\end{theorem}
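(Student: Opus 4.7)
The plan is to prove the strongest item, namely spaceability of $AOS(\I)$, from which (2), (4) and (6) follow via the inclusion $AOS(\I)\subseteq AOS(\I^*)$, while (1) and (3) are implied by (5) since every infinite-dimensional Banach space has dimension at least $\continuum$.

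For each $n\in\N$, set $E_n=\{e_{D_n}(i):i\in C\}$. The $E_n$ are pairwise disjoint (as subsets of the pairwise disjoint $D_n$) and each $E_n\in\I^+$ by hypothesis. Because $C\in\I_{1/n}$, the number $S=\sum_{i\in C}1/i$ is finite and positive. Define, for each $n\in\N$, a sequence $x^{(n)}\in\R^\N$ by
$$x^{(n)}(k)=\begin{cases}\dfrac{1}{iS}&\text{if }k=e_{D_n}(i)\text{ for some }i\in C,\\ 0&\text{otherwise.}\end{cases}$$
Then $\nosnik(x^{(n)})=E_n$, $\lVert x^{(n)}\rVert=1$, and for $k=e_{D_n}(i)\in E_n$ one has $e_{D_n}(i)\geq i$, so
$$\bigl|k\,x^{(n)}(k)\bigr|=\frac{e_{D_n}(i)}{iS}\geq \frac{1}{S}>0.$$
Thus each $x^{(n)}$ lies in $AOS(\I)$, since $E_n\in\I^+$ witnesses that $(kx^{(n)}(k))_k$ is not $\I$-convergent to zero.

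Next I would form
$$V=\left\{\sum_{n=1}^\infty t_n x^{(n)}:(t_n)\in\ell_1\right\}.$$
Because the $x^{(n)}$ are normalized and have pairwise disjoint supports, they form an isometric copy of the $\ell_1$-unit basis; hence every series $\sum_n t_n x^{(n)}$ with $(t_n)\in\ell_1$ converges absolutely in $\ell_1$, and $V$ is a closed infinite-dimensional subspace of $\ell_1$ with $\bigl\lVert\sum_n t_n x^{(n)}\bigr\rVert=\sum_n|t_n|$.

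The only nontrivial check is that $V\setminus\{0\}\subseteq AOS(\I)$. Take $y=\sum_n t_n x^{(n)}\neq 0$ and fix $n_0$ with $t_{n_0}\neq 0$. By disjointness of supports, for every $k\in E_{n_0}$,
$$y(k)=t_{n_0}x^{(n_0)}(k),\qquad\text{so}\qquad|k\,y(k)|=|t_{n_0}|\cdot|k\,x^{(n_0)}(k)|\geq \frac{|t_{n_0}|}{S}>0.$$
Since $E_{n_0}\in\I^+$, the sequence $(ky(k))_k$ is not $\I$-convergent to $0$, so $y\in AOS(\I)$. This gives (5); the remaining items follow as indicated. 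The subtlety, and the reason the hypothesis is phrased through the enumerations $e_{D_n}$, is precisely to secure the inequality $e_{D_n}(i)\geq i$ together with summability on $C$, so that the two competing requirements ``$x^{(n)}\in\ell_1$'' and ``$kx^{(n)}(k)$ bounded away from $0$ on an $\I$-positive set'' can be met simultaneously by disjointly-supported blocks.
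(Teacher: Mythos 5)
Your proof is correct and follows essentially the same route as the paper: normalized, disjointly supported blocks transported onto the sets $D_n$ via the enumerations $e_{D_n}$, the key inequality $e_{D_n}(i)\geq i$ to keep $k\,x^{(n)}(k)$ bounded below on an $\I$-positive set, and the standard fact that such blocks span a closed isometric copy of $\ell_1$. The only cosmetic difference is that you support $x^{(n)}$ exactly on $\{e_{D_n}(i):i\in C\}$ with weights $1/(iS)$, whereas the paper first builds a single sequence on $\N$ (with $1/n^2$ off $C$), normalizes it, and copies it onto each $D_n$; both arguments are sound.
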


\begin{proof}
Since $\I^+\cap \I_{1/n}\neq\emptyset$, we obtain 
(1), (2), (3) and (4) from Theorem~\ref{the:lineability}.
By $AOS(\I)\subseteq AOS(\I^*)$ we get (5)$\implies$(6). 
Thus, it suffices to show (5).

Let $D_n$ and $C$ be as in the assumption of the theorem.

For each $n\in \N$ we define
$$a_n = \begin{cases}
\frac{1}{n}&\text{for $n\in C$},\\
\frac{1}{n^2}&\text{otherwise}.
\end{cases}$$
Then  $(a_n)\in \ell_1$, $\lVert (a_n)\lVert>0$
and the sequence $(na_n)$ is not $\I$-convergent to zero.
Now, we define $b_n=a_n/\lVert (a_n)\rVert$ for each $n\in \N$, and notice that 
$(b_n)\in \ell_1$, $\lVert (b_n)\rVert=1$ and the sequence $(nb_n)$ is not $\I$-convergent to zero.

For each $i,n\in \N$, we define 
$$x^{(i)}(n) =\begin{cases}
b_j&\text{if $n\in D_i$, $n=e_{D_i}(j)$},\\
0&\text{otherwise.}
\end{cases}$$
Then  $x^{(i)}\in \ell_1$, $\lVert x^{(i)}\rVert=1$, $\nosnik(x^{(i)}) =  D_i$ and $\nosnik(x^{(i)})\cap \nosnik(x^{(j)})=\emptyset$ for each $i,j\in \N$, $i\neq j$.
Thus  
$$V=\left\{\sum_{i=1}^\infty t_i x^{(i)}:\ (t_i)\in\ell_1\right\}$$ 
is a closed subspace of infinite dimension. Hence, we only need to show that $V\subseteq AOS(\I)\cup\{0\}$. 

Let $(t_i)\in\ell_1$. If $t_i=0$ for all $i\in\N$ then obviously $\sum_{i=1}^\infty t_i x^{(i)}\in AOS(\I)\cup\{0\}$. 
Suppose that $t_{i_0}\neq 0$ for some $i_0\in \N$.
Then for any $j\in C $ and $n=e_{D_{i_0}}(j)$ we have
\begin{equation*}
    \begin{split}
\left|n\left(\sum_{i=1}^\infty t_i x^{(i)}(n)\right)\right| 
&=
\left|e_{D_{i_0}}(j)  \left(\sum_{i=1}^\infty t_i x^{(i)}(e_{D_{i_0}}(j))\right)\right|
\\&=
\left|e_{D_{i_0}}(j)  t_{i_0} x^{(i_0)}(e_{D_{i_0}}(j))\right|
\\&=
\left|e_{D_{i_0}}(j)  t_{i_0}  b_j\right|
\\&\geq 
\left|j  t_{i_0}  b_j\right|
=
\left| j t_{i_0} \cdot \frac{1/j}{\lVert (a_k)\rVert}\right| 
= 
\frac{|t_{i_0}|}{\lVert (a_k)\rVert} >0. 
    \end{split}
\end{equation*}
Since 
$\{e_{D_{i_0}}(j):j\in C\} \notin\I$,
we obtain that the sequence 
$\left(n \left(\sum_{i=1}^\infty t_i x^{(i)}(n)\right)\right)_n$
is not $\I$-convergent to zero, hence it belongs to $AOS(\I)$.
\end{proof}

\begin{corollary}
\label{cor:spaceability-2-non-tall}
If an ideal $\I$ is not tall, then 
\begin{enumerate}
    \item $AOS(\I)\neq\emptyset$,
    \item $AOS(\I^*)\neq\emptyset$,
    \item $AOS(\I)$ is $\mathfrak{c}$-lineable,
    \item $AOS(\I^*)$ is $\mathfrak{c}$-lineable,
    \item $AOS(\I)$ is spaceable,
    \item $AOS(\I^*)$ is spaceable.
\end{enumerate}
 \end{corollary}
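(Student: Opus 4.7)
The plan is to verify the hypotheses of Theorem~\ref{thm:spaceability-2} directly from non-tallness and then invoke that theorem to obtain all six conclusions. Unfolding the definition, $\I$ not being tall means there is an infinite set $A\subseteq\N$ such that no infinite subset of $A$ belongs to $\I$; equivalently, every infinite subset of $A$ lies in $\I^+$. This single observation is the entire source of all witnesses we need.

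Next I would partition $A$ into pairwise disjoint infinite subsets $D_1,D_2,\ldots$ (which is possible because $A$ itself is infinite). Each $D_n$ is an infinite subset of $A$ and therefore $D_n\in\I^+$. To produce the index set $C$, enumerate $A=\{a_1<a_2<\ldots\}$ and, using that $a_k\to\infty$, choose indices $k_1<k_2<\ldots$ with $a_{k_j}\geq 2^j$ for all $j$; set $C=\{a_{k_j}:j\in\N\}$. By construction $C$ is an infinite subset of $A$, so $C\in\I^+$, and
$$\sum_{n\in C}\frac{1}{n}\leq\sum_{j=1}^\infty\frac{1}{2^j}<\infty,$$
hence $C\in\I_{1/n}$.

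The remaining condition is that $\{e_{D_n}(i):i\in C\}\in\I^+$ for every $n\in\N$. But since $e_{D_n}$ is injective and $C$ is infinite, this set is an infinite subset of $D_n\subseteq A$, so it is $\I$-positive by the defining property of $A$. All hypotheses of Theorem~\ref{thm:spaceability-2} now hold, and conclusions (1)--(6) of that theorem yield the six conclusions of the corollary.

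There is no real obstacle here; the only point worth flagging is the verification that one can extract from the \emph{single} witness set $A$ both the infinitely many disjoint $\I^+$-witnesses $D_n$ \emph{and} the summable index set $C\in\I^+$ \emph{and} all of the images $\{e_{D_n}(i):i\in C\}$ simultaneously---but this is exactly what non-tallness was designed to deliver, since every infinite subset of $A$ (whether a piece of the partition, a sparse selection, or an image under an enumeration) is automatically $\I$-positive.
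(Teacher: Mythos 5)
Your proposal is correct and follows essentially the same route as the paper: both verify the hypotheses of Theorem~\ref{thm:spaceability-2} by partitioning the non-tallness witness $A$ into infinite pieces $D_n$ and noting that every infinite subset of $A$ (including each $\{e_{D_n}(i):i\in C\}$) is automatically $\I$-positive. The only cosmetic difference is that you explicitly build the sparse set $C\subseteq A$ to get $C\in\I^+\cap\I_{1/n}$, whereas the paper simply takes any $C\in\I^+\cap\I_{1/n}$ and leaves that (easy) nonemptiness check implicit.
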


\begin{proof}
Let  $A\subseteq\N$ be an infinite set which does not contain an infinite subsets from $\I$.
Let $D_n\subseteq A$, $n\in \N$, be pairwise disjoint infinite sets.
Take any  $C\in \I^+\cap \I_{1/n}$.
Then $C$ is infinite, so 
$\{e_{D_n}(i):i\in C\}$ is an infinite subset of $A$, hence it belongs to $\I^+$. 
Now, Theorem~\ref{thm:spaceability-2} finishes the proof.
\end{proof}

\begin{corollary}
\label{cor:spaceability-2}
Let $\I$ be an ideal on $\N$ such that 
\begin{itemize}
\item 
there exists  an infinite  partition of $\N$ into sets from $\I^+$, 
\item 
for each $B\in \I^+$ there exists $D\subseteq B$ such that $D\in \I^+$ and
$$\forall A\subseteq \N \, \left(A\in \I\iff  \{e_D(i):i\in A\}\in \I\right)$$
(i.e.~the bijection $e_D$ witnesses the fact that the ideals $\I$ and $\I\restriction D$ are isomorphic).
\end{itemize}
Then the following conditions are equivalent.
\begin{enumerate}
    \item $AOS(\I)\neq\emptyset$.
    \item $AOS(\I^*)\neq\emptyset$.
    \item $AOS(\I)$ is $\mathfrak{c}$-lineable.
    \item $AOS(\I^*)$ is $\mathfrak{c}$-lineable.
    \item $AOS(\I)$ is spaceable.
    \item $AOS(\I^*)$ is spaceable.
    \item $\I_{1/n}\cap \I^+\neq \emptyset$.
\end{enumerate}
\end{corollary}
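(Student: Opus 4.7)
The plan is to reduce the corollary directly to Theorem~\ref{thm:spaceability-2}, using the two structural hypotheses on $\I$ to manufacture the data that theorem requires. The equivalences $(1)\iff(2)\iff(3)\iff(4)\iff(7)$ are already delivered by Theorem~\ref{the:lineability}, so they require no further work here. The implications $(5)\implies(3)$ and $(6)\implies(4)$ are the standard fact that any infinite-dimensional Banach space (and in particular any infinite-dimensional closed subspace of $\ell_1$) has dimension at least $\continuum$, which is already invoked in the proof of Theorem~\ref{thm:spaceability-1}. Finally, $(5)\implies(6)$ is immediate from the inclusion $AOS(\I)\subseteq AOS(\I^*)$: a closed infinite-dimensional subspace contained in $AOS(\I)\cup\{0\}$ is automatically contained in $AOS(\I^*)\cup\{0\}$. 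Thus the only substantive task is to prove $(7)\implies(5)$.

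For this I would argue as follows. Assume $(7)$, and fix once and for all some $C\in\I_{1/n}\cap\I^+$. By the first structural hypothesis, choose an infinite partition $\{B_n:n\in\N\}$ of $\N$ with every $B_n\in\I^+$. By the second structural hypothesis applied to each $B_n$, pick $D_n\subseteq B_n$ with $D_n\in\I^+$ such that the bijection $e_{D_n}:\N\to D_n$ satisfies $A\in\I\iff\{e_{D_n}(i):i\in A\}\in\I$ for all $A\subseteq\N$. The sets $D_n$ are pairwise disjoint because the $B_n$ are. Applying the isomorphism condition to $A=C$, which lies in $\I^+$, I obtain $\{e_{D_n}(i):i\in C\}\in\I^+$ for every $n\in\N$. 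This is precisely the hypothesis of Theorem~\ref{thm:spaceability-2}, whose conclusion supplies $(5)$ (and indeed $(1)$--$(6)$) at once.

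Since no step beyond invoking the two hypotheses and Theorem~\ref{thm:spaceability-2} is involved, there is no genuine obstacle; the only point that has to be stated carefully is the direction in which the isomorphism condition is used, namely that $e_{D_n}$ pushes $\I$-positive sets to $\I$-positive sets. With that observation the argument collapses to a one-line verification of the hypotheses of the quoted theorem.
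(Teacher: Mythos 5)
Your proposal is correct and follows essentially the same route as the paper's proof: the equivalences of (1)--(4) and (7) via Theorem~\ref{the:lineability}, the standard dimension-$\continuum$ fact and the inclusion $AOS(\I)\subseteq AOS(\I^*)$ for the easy implications, and then $(7)\implies(5)$ by choosing the partition $\{B_n\}$, shrinking each $B_n$ to an isomorphic copy $D_n$, and pushing $C\in\I_{1/n}\cap\I^+$ through $e_{D_n}$ to verify the hypotheses of Theorem~\ref{thm:spaceability-2}. No gaps.
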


\begin{proof}
The equivalence of (1), (2), (3), (4) and (7) is due to Theorem~\ref{the:lineability}.
It is known (\cite[Theorem I-1]{MR12204}, see also \cite{MR320717} or \cite{MR1825451}) that every infinite-dimensional Banach space has dimension at least $\continuum$, so we obtain the implications (6)$\implies$(4) and (5)$\implies$(3). 
By $AOS(\I)\subseteq AOS(\I^*)$ we get (5)$\implies$(6). 
Thus, it suffices to show (7)$\implies$(5).

Let $C\in \I^+\cap \I_{1/n}$.
Let $B_n\in \I^+$, $n\in\N$, be an infinite  partition of $\N$.

For every $n\in \N$, we take $D_n\subseteq B_n$ such that $D_n\in \I^+$ and $e_{D_n}$ witnesses the fact that $\I$ and $\I\restriction D_n$ are isomorphic. 

Since $C\in \I^+$, we obtain that the set  $\{e_{D_n}(i):i\in C\}\in \I^+$ for each $n\in \N$. 

Now Theorem~\ref{thm:spaceability-2} finishes the proof.
\end{proof}

The first assumption of Corollary~\ref{cor:spaceability-2} can be characterized in terms of maximal ideals (see Proposition~\ref{prop:finite-intersection-of-ultrafilters}), which in turn can be  used  to show (see Proposition~\ref{prop:infinite-partition-of-positive-sets-BP-LEB})  that this assumption is valid for most ideals used in the literature (e.g.~for all Borel ideals).

\begin{proposition}[{\cite[Lemma~1.3]{MR3226022}}]
\label{prop:finite-intersection-of-ultrafilters}
Let $\I$ be an ideal on $\N$. Then the following conditions are equivalent.
\begin{enumerate}
\item There exists an infinite  partition of $\N$ into sets from $\I^+$. 
\item $\I$ is not equal to the intersection of finitely many maximal ideals.
\end{enumerate}
\end{proposition}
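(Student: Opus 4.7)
The plan is to prove the two directions separately, with the forward direction following from a pigeonhole/ultrafilter argument and the converse from an iterated splitting construction.

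For $(1) \implies (2)$: I would assume an infinite partition $\{A_n : n \in \N\}$ of $\N$ into $\I^+$ sets and suppose for contradiction that $\I = \cM_1 \cap \cdots \cap \cM_k$ with each $\cM_j$ maximal. Since $A_n \notin \I$, for each $n$ there is some $j(n) \leq k$ with $A_n \notin \cM_{j(n)}$, i.e., $A_n$ belongs to the ultrafilter $\cM_{j(n)}^*$. By pigeonhole, there are $n_1 \neq n_2$ with $j(n_1) = j(n_2) = j$, placing both $A_{n_1}$ and $A_{n_2}$ in $\cM_j^*$. But then $A_{n_1} \cap A_{n_2} = \emptyset$ would belong to the ultrafilter $\cM_j^*$, which is impossible.

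For $(2) \implies (1)$, the key technical lemma is this: whenever $E \in \I^+$ and $\I \restriction E$ is not the intersection of finitely many maximal ideals on $E$, one can split $E = B \sqcup C$ with $B, C \in \I^+$ such that $\I \restriction C$ is still not a finite intersection of maximal ideals on $C$. To prove the lemma, I would first note that $\I \restriction E$ is in particular not maximal on $E$, so at least one splitting $E = B \sqcup C$ with both pieces in $\I^+$ exists. I would then use the standard correspondence $\cN \mapsto \{A \subseteq E : A \cap B \in \cN\}$ between maximal ideals on $B$ and maximal ideals on $E$ containing $C$ to show that if both $\I \restriction B$ and $\I \restriction C$ were finite intersections of, respectively, $k_B$ and $k_C$ maximal ideals, then the $(k_B + k_C)$ corresponding extensions to $E$ would intersect exactly to $\I \restriction E$ (the nontrivial containment: if $A \subseteq E$ lies in all these extensions, then $A \cap B \in \I \restriction B$ and $A \cap C \in \I \restriction C$, whence $A \in \I$). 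This would contradict the hypothesis on $E$, so at least one of $B, C$ — which we relabel as $C$ — inherits the failure property.

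With the lemma in hand, the rest is a straightforward recursion: start with $E_0 = \N$ (applicable by the global hypothesis) and repeatedly apply the lemma to get $E_{n-1} = B_n \sqcup E_n$ with $B_n \in \I^+$ and $\I \restriction E_n$ still not a finite intersection of maximal ideals. This produces pairwise disjoint $B_1, B_2, \ldots \in \I^+$; redefining $A_1 = B_1 \cup (\N \setminus \bigcup_{n \geq 1} B_n)$ and $A_n = B_n$ for $n \geq 2$ yields the desired infinite partition of $\N$ into $\I^+$ sets. The main obstacle is the verification of the intersection formula in the lemma, requiring a careful unpacking of the correspondence between maximal ideals on a subset and their extensions to a superset; everything else is routine bookkeeping.
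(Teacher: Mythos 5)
Your proof is correct. Note that the paper itself offers no proof of this proposition---it is quoted from \cite[Lemma~1.3]{MR3226022}---so there is nothing internal to compare against; your argument (pigeonhole on the ultrafilters dual to the $\cM_j$ for one direction, and the recursive splitting lemma based on the correspondence between maximal ideals on $B$ and maximal ideals on $E$ containing $E\setminus B$ for the other) is the standard one and all the steps check out, including the verification that the extensions $\{A\subseteq E: A\cap B\in\cN\}$ are maximal and intersect exactly to $\I\restriction E$, and the absorption of the residual set $\N\setminus\bigcup_n B_n$ into the first piece of the partition.
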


\begin{proposition}
\label{prop:infinite-partition-of-positive-sets-BP-LEB}
Let $\I$ be an ideal on $\N$.
If $\I$  has the Baire property or is Lebesgue measurable (in particular, if it is a Borel ideal), then there exists  an infinite  partition of $\N$ into sets from $\I^+$.
\end{proposition}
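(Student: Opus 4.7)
My plan is to appeal to the Jalali-Naini-Talagrand theorem: for any ideal $\I$ on $\N$ containing $\fin$, having the Baire property (equivalently, being Lebesgue measurable, equivalently being meager, equivalently being null) is equivalent to the existence of a partition of $\N$ into consecutive finite intervals $(I_n)_{n \in \N}$ such that, for every $B \in \I$, the set $\{n \in \N : I_n \subseteq B\}$ is finite. The hypothesis on $\I$ therefore supplies such an interval sequence $(I_n)$.

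Given $(I_n)$, I would fix any decomposition $\N = \bigsqcup_{j \in \N} N_j$ of the index set into infinitely many pairwise disjoint infinite subsets and, for each $j$, define
\[ A_j = \bigcup_{n \in N_j} I_n. \]
Because $(I_n)$ partitions $\N$ and $(N_j)$ partitions the index set, the family $(A_j)_{j \in \N}$ is automatically an infinite partition of $\N$. To verify that every $A_j$ belongs to $\I^+$, I would argue by contradiction: if $A_j \in \I$, the Jalali-Naini-Talagrand condition forces $\{n : I_n \subseteq A_j\}$ to be finite, whereas this set is precisely $N_j$, since each $I_m$ with $m \in N_j$ sits inside $A_j$ by construction, while for $n \notin N_j$ the interval $I_n$ is disjoint from $A_j$ (by the pairwise disjointness of the $I_n$) and hence not contained in $A_j$. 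As $N_j$ is infinite, this is a contradiction.

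I do not expect any real obstacle once the Jalali-Naini-Talagrand theorem has been invoked; everything afterwards is elementary combinatorics. An alternative, more cumbersome route would be to combine Proposition~\ref{prop:finite-intersection-of-ultrafilters} with the classical fact (already used in the proof of Theorem~\ref{thm:spaceability-1-Borel}) that no maximal ideal has the Baire property or is Lebesgue measurable, and then to show that no finite intersection $\I = M_1 \cap \cdots \cap M_k$ of maximal ideals can have either property, e.g.~by choosing $D$ in the ultrafilter dual to $M_k$ but outside the remaining dual ultrafilters so that $\I\restriction D$ collapses to a maximal ideal on $D$. I would avoid this path because passing the Baire property or measurability from $\I$ down to $\I\restriction D$ for an infinite $D \subsetneq \N$ is technically delicate: the natural copy of $\cP(D)$ inside $\cP(\N)$ is itself meagre, so the restriction of a meagre set need not be meagre. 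The Jalali-Naini-Talagrand approach sidesteps this subtlety entirely.
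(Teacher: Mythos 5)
Your proof is correct, but it follows a genuinely different route from the paper's. The paper reduces the claim to showing that $\I$ is not a finite intersection of maximal ideals (via Proposition~\ref{prop:finite-intersection-of-ultrafilters}) and then quotes two facts: Talagrand's result that a countable intersection of ideals without the Baire property (respectively, nonmeasurable ideals) again lacks the Baire property (respectively, is nonmeasurable), and the classical fact that maximal ideals have neither property. You instead invoke the Jalali--Naini--Talagrand characterization directly and build the partition by hand: take an interval partition $(I_n)$ such that each $B\in\I$ contains only finitely many $I_n$, split the index set into infinitely many infinite pieces $N_j$, and set $A_j=\bigcup_{n\in N_j}I_n$; each $A_j$ contains infinitely many intervals, hence $A_j\in\I^+$. (In fact you only need $\{n:I_n\subseteq A_j\}\supseteq N_j$; the reverse inclusion you verify is not needed.) Your argument is more constructive and self-contained modulo one black box, and it completely bypasses Proposition~\ref{prop:finite-intersection-of-ultrafilters}; the paper's argument is shorter given that proposition is already in the text and only uses the (easier) consequence of the same circle of ideas that maximal ideals are nonmeasurable and lack the Baire property, at the cost of an additional citation for the countable-intersection lemma. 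Your closing remark about the delicacy of restricting the Baire property to $\I\restriction D$ is well taken, but note that the paper's actual argument does not restrict to subsets at all, so it does not run into that issue.
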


\begin{proof}
Let $\I$ be an ideal with the Baire property (Lebesgue measurable, resp.).
In view of Proposition~\ref{prop:finite-intersection-of-ultrafilters}, we only need to show that $\I$ is not the intersection of finitely many maximal ideals.
In \cite[Proposition~23]{MR579439} (\cite[Proposition 7]{MR579439}, resp), the authors proved that the intersection of countably many ideals without the Baire property (Lebesgue nonmeasurable, resp.) does not have the Baire property (is not Lebesgue measurable, resp.). 
Since maximal ideals do not have the Baire property and are not Lebesgue measurable (see e.g.~\cite[{Theorem~4.1.1}]{MR1350295}), we obtain that $\I$ is not the intersection of countably  many (in particular, finitely many)  maximal ideals.
\end{proof}

Below, we show two examples of tall ideals which satisfy assumptions of Corollary~\ref{cor:spaceability-2}. 

\begin{example}[Hindman ideal]
A set $A\subseteq \N$ is an \emph{IP-set}  if there exists an infinite set $D\subseteq\N$ such that $\FS(D)\subseteq A$ where $\FS(D)$ denotes the set of all finite non-empty sums of distinct elements of $D$.
It follows from Hindman's theorem (\cite{MR349574}, see also \cite{MR1044995}) that if $A\cup B$ is an IP-set, then $A$ or $B$ is an IP-set as well.
Thus, the family 
$$\I_{IP} = \{A\subseteq\N: \text{$A$ is not an IP-set}\}$$ 
is an ideal on $\N$.
The ideal $\I_{IP}$  is coanalytic as
$\I_{IP}^+$ is a projection on the first coordinate of a closed set 
$$B = \{(A,D)\in \{0,1\}^\N\times [\N]^\omega: \FS(D)\subseteq A\},$$
where $[\N]^\omega$ is a set of all infinite subsets of $\N$ (which is a $G_\delta$ subset of $\{0,1\}^\N$, hence the Polish space).
Consequently, $\I_{IP}$ has  the Baire property, so by Proposition~\ref{prop:infinite-partition-of-positive-sets-BP-LEB}, the ideal $\I_{IP}$ satisfies the first assumption of Corollary~\ref{cor:spaceability-2}.
The fact that $\I_{IP}$ satisfies the second assumption of Corollary~\ref{cor:spaceability-2} follows from the proof  of
\cite[Theorem~4.5]{MR3276758}.
\end{example}

\begin{example}[van der Waerden ideal]
A set $A\subseteq \N$ is an \emph{AP-set} if $A$ contains arithmetic progressions of arbitrary finite length. 
It follows from  van der Waerden's theorem (\cite{vanderWearden}, see also \cite{MR1044995}) that if $A\cup B$ is an AP-set, then $A$ or $B$ is an AP-set as well.
Thus, the family 
$$\I_{AP}  = \{A\subseteq\N: \text{$A$ is not an AP-set}\}$$ 
is an ideal on $\N$.
One can show, that this is a Borel ideal (even $F_\sigma$ ideal).
Indeed, $\I_{AP}$ is $F_\sigma$ as $\I_{AP}^+$ is $G_\delta$ and that is true because 
$\I_{AP}^+=\bigcap_{n=1}^\infty\bigcup_{k=1}^\infty\bigcup_{r=1}^\infty A_{n,k,r},$
where $A_{n,k,r}=\{A\subseteq\N:\{k,k+r,k+2r,\ldots, k+nr\}\subseteq A\}$ is a basic open set.
Therefore, by Proposition~\ref{prop:infinite-partition-of-positive-sets-BP-LEB}, the ideal $\I_{AP}$ satisfies the first assumption of Corollary~\ref{cor:spaceability-2}.
The fact that $\I_{AP}$ satisfies the second assumption of Corollary~\ref{cor:spaceability-2} follows from the proof  of
\cite[Theorem~3.3]{MR3276758}.
\end{example}

\begin{theorem}
If  $\I$ is  an ideal on $\N$ such that 
there exists  a set $C\in \I^+\cap \I_{1/n}$ such that  there exists an infinite partition of $C$ into sets from $\I^+$,  
then 
\begin{enumerate}
    \item $AOS(\I)\neq\emptyset$,
    \item $AOS(\I^*)\neq\emptyset$,
    \item $AOS(\I)$ is $\mathfrak{c}$-lineable,
    \item $AOS(\I^*)$ is $\mathfrak{c}$-lineable,
    \item $AOS(\I)$ is spaceable,
    \item $AOS(\I^*)$ is spaceable.
\end{enumerate}
\end{theorem}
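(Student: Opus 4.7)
Items (1)--(4) follow immediately from Theorem~\ref{the:lineability}: the hypothesis provides a set $C\in\I^+\cap\I_{1/n}$, so $\I_{1/n}\cap\I^+\neq\emptyset$. Since $AOS(\I)\subseteq AOS(\I^*)$, implication (5)$\implies$(6) is trivial, so the whole proof reduces to establishing (5), that is, constructing a closed infinite-dimensional subspace of $\ell_1$ contained in $AOS(\I)\cup\{0\}$.

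The plan is to adapt the construction in the proof of Theorem~\ref{thm:spaceability-2}, but to use the given partition directly instead of enumeration maps. Let $C=\bigsqcup_{i\in\N}C_i$ with each $C_i\in\I^+$, and define
$$a_n=\begin{cases} 1/n & \text{if } n\in C,\\ 1/n^2 & \text{if } n\notin C.\end{cases}$$
Because $C\in\I_{1/n}$, we have $(a_n)\in\ell_1$. For each $i\in\N$ set $s_i=\sum_{n\in C_i}a_n=\sum_{n\in C_i}1/n$; since $C_i\subseteq C$, we have $s_i<\infty$, and since $C_i\in\I^+\supseteq[\N]^{<\infty}$ is infinite, $s_i>0$. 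Now define
$$x^{(i)}(n)=\begin{cases} a_n/s_i & \text{if } n\in C_i,\\ 0 & \text{otherwise.}\end{cases}$$
Then $\lVert x^{(i)}\rVert=1$, and the supports $C_i$ are pairwise disjoint.

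Because the supports are disjoint and each $x^{(i)}$ is a unit vector, the map $(t_i)\mapsto\sum_{i=1}^\infty t_i x^{(i)}$ is an isometric embedding of $\ell_1$ into $\ell_1$, so its image $V$ is a closed infinite-dimensional subspace. It remains to verify $V\subseteq AOS(\I)\cup\{0\}$. Take a nonzero $y=\sum_{i=1}^\infty t_i x^{(i)}\in V$ and pick $i_0$ with $t_{i_0}\neq 0$. For every $n\in C_{i_0}$, the disjointness of supports gives
$$|n\, y(n)|=\left|n\, t_{i_0}\,\frac{a_n}{s_{i_0}}\right|=\left|n\, t_{i_0}\,\frac{1/n}{s_{i_0}}\right|=\frac{|t_{i_0}|}{s_{i_0}}>0.$$
Hence $\{n:|ny(n)|\geq |t_{i_0}|/s_{i_0}\}\supseteq C_{i_0}\in\I^+$, so $(ny(n))$ is not $\I$-convergent to zero and $y\in AOS(\I)$.

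There is no genuine obstacle here; the argument is a streamlined variant of Theorem~\ref{thm:spaceability-2}. The only thing to check carefully is that each normalizing constant $s_i$ is a positive \emph{finite} real, which is exactly why we require $C$ itself to lie in $\I_{1/n}$ (so that the tails on each $C_i$ are summable) and why $C_i\in\I^+$ forces infiniteness (so that $s_i>0$). Once these sanity checks are in place, the isometry argument and the lower bound on $|ny(n)|$ over $C_{i_0}\in\I^+$ finish the proof.
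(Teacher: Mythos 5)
Your proof is correct and follows essentially the same route as the paper's, which takes the sequence equal to $1/n$ on $C$ and $1/n^2$ elsewhere and then runs the disjointly-supported-unit-vector argument of Theorem~\ref{thm:spaceability-1}; your only variation is normalizing each block $C_i$ separately by $s_i$ instead of adjusting the value at $\min C_i$, which is a harmless (and slightly cleaner) way to get unit vectors. One cosmetic slip: it is not true that $\I^+\supseteq[\N]^{<\infty}$ --- rather $\fin\subseteq\I$ forces every member of $\I^+$ to be infinite, and in any case $C_i\neq\emptyset$ already gives $s_i>0$.
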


\begin{proof}
Let $(b_n)$ be defined as in the proof of Theorem~\ref{thm:spaceability-2} and then proceed as in  the proof of Theorem~\ref{thm:spaceability-1}.
\end{proof}

Let $C\in \I_{1/n}$ be an infinite set. Let $\I$ be an ideal  such that 
$\I\restriction C$ is a Borel ideal and $\I\restriction (\N\setminus C)$ is a maximal ideal.
Then  the ideal  $\I$
satisfies the assumption of the above theorem, but it does not satisfy the assumption of Theorem~\ref{thm:spaceability-1}.

\begin{question}
\label{q:spaceable-iff}
Is $AOS(\I)\neq\emptyset$ ($AOS(\I^*)\neq\emptyset$,  resp.) a necessary and sufficient condition for  $AOS(\I)$ ($AOS(\I^*)$, resp.) to be spaceable for each ideal $\I$?
\end{question}

\begin{question}
\label{q:nontrivial-non-spaceable-ideal}
Does there exist an ideal $\I$ such that $AOS(\I^+)\neq\emptyset$ is  spaceable?
\end{question}

%%%%%%%%%%%%%%%%%%%%%%%%%%%%%%%%%%%%%%%%%%%%%%%%%%%%%%%%%%%%%%%%%%%%%%%%%

\subsection{Algebrability}
\label{subsec:algebrability}

\begin{theorem}
\label{thm:strong-algebrability}
Let  $(a_k)$ be a~sequence tending to infinity and $(m_k)$ be an increasing sequence of positive integers such that 
$$m_k\geq k^{a_k} \text{\ \ \ for each $k$}.$$
If $M=\{m_k: k\in \N\}$ and 
\begin{enumerate}
    \item 
 $M \in\I^+$, then $AOS(\I)$ and $AOS(\I^*)$  are strongly $\continuum$-algebrable;
    \item 
 $M\in\I^*$, then $AOS(\I^+)$ is strongly $\continuum$-algebrable.
\end{enumerate}\end{theorem}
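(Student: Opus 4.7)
The plan is to exhibit $\continuum$ many sequences in $\ell_1$ that generate a free subalgebra inside $AOS(\I)\cup\{0\}$ in case~(1), and inside $AOS(\I^+)\cup\{0\}$ in case~(2); a single construction will handle both.

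First I would fix $H\subseteq(0,\infty)$ with $|H|=\continuum$ such that $\{1\}\cup H$ is linearly independent over $\Q$ (take a Hamel basis of $\R$ over $\Q$ containing $1$ and retain the positive elements other than~$1$). For each $\alpha\in H$, define
$$x^{(\alpha)}(n)=\begin{cases}1/k^{2+\alpha}&\text{if } n=m_k,\\ 0&\text{otherwise.}\end{cases}$$
Since $2+\alpha>1$, a $p$-series comparison shows $x^{(\alpha)}\in\ell_1$. Using $m_k\geq k^{a_k}$ and $a_k\to\infty$, one has $nx^{(\alpha)}(n)=m_k/k^{2+\alpha}\geq k^{a_k-2-\alpha}\to\infty$ along $M$, so $\{n:|nx^{(\alpha)}(n)|\geq 1\}$ contains a cofinite subset of $M$. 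In case~(1), $M\in\I^+$ implies this set lies in $\I^+$, whence $x^{(\alpha)}\in AOS(\I)\subseteq AOS(\I^*)$. In case~(2), $M\in\I^*$ implies that for every $A\in\I^+$ the intersection $A\cap M$ is in $\I^+$ (hence infinite) and the values on it tend to infinity, so $x^{(\alpha)}\in AOS(\I^+)$.

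Next, for any non-zero polynomial $P\in\R[y_1,\dots,y_s]$ without constant term and any distinct $\alpha_1,\dots,\alpha_s\in H$, expanding at $n=m_k$ gives
$$P(x^{(\alpha_1)},\dots,x^{(\alpha_s)})(m_k)=\sum_{I}\frac{c_I}{k^{\delta_I}},\qquad \delta_I:=\sum_j I_j(2+\alpha_j).$$
The key point is that these real exponents are pairwise distinct across distinct multi-indices: $\delta_I=\delta_J$ is an integer-coefficient relation among $\{1,\alpha_1,\dots,\alpha_s\}$, so $\Q$-linear independence forces $I=J$. Letting $I^{*}$ minimise $\delta_I$ over those $I$ with $c_I\neq 0$, the remaining terms are of strictly lower order, so $P(x)(m_k)\sim c_{I^*}/k^{\delta_{I^*}}$ and $m_k P(x)(m_k)\geq \tfrac{1}{2}|c_{I^*}|k^{a_k-\delta_{I^*}}\to\infty$ for large $k$. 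This simultaneously shows $P(x)\neq 0$ in $\ell_1$ (so the generators are $\R$-algebraically independent) and puts $P(x)$ in $AOS(\I)$ (case~1) or $AOS(\I^+)$ (case~2) by the same argument used for the generators themselves.

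The main obstacle is to align the algebraic-independence argument with the $AOS$-membership argument in a single uniform estimate; the reciprocal-power shape $1/k^{2+\alpha}$ ties both problems to the distinctness of the leading exponent $\delta_{I^*}$, which follows at once from $\Q$-linear independence of $\{1\}\cup H$. This keeps the construction clean and, importantly, robust with respect to the growth rate of $a_k$, so that no delicate case analysis of $a_k$ versus $\log k$ is needed to rule out cancellations among the dominant terms.
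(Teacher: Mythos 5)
Your proof is correct and follows essentially the same route as the paper's: pick continuum many $\Q$-linearly independent exponents, build reciprocal-power sequences concentrated on $M$, and use the pairwise distinctness of the exponents $\delta_I$ to isolate a dominant term in $m_k P(\cdot)(m_k)$, which simultaneously gives freeness and membership in $AOS(\I)$ (resp.\ $AOS(\I^+)$). The only cosmetic differences are that the paper takes its independent set $\Lambda$ inside $(1,2)$ (so it need not adjoin $1$) and sets the sequences equal to $1/n^\alpha$ off $M$ rather than $0$.
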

\begin{proof}
Since $AOS(\I)\subseteq AOS(\I^*)$, strong $\continuum$-algebrability of $AOS(\I^*)$ will follow from
strong $\continuum$-algebrability of $AOS(\I)$.
Below we simultaneously prove  strong $\continuum$-algebrability of $AOS(\I)$ and $AOS(\I^+)$. 

Let  $(a_k)$ and  $M\in \I^+$ ($M\in \I^*$, resp.) satisfy the assumptions of the theorem.

Let $\Lambda\subseteq(1,2)$ be a linearly independent set over rationals with $|\Lambda|=\continuum$. 

For every $\alpha\in\Lambda$, we define a  sequence $a^{(\alpha)}$ by
$$a^{(\alpha)}(n)=\begin{cases}
\frac{1}{k^\alpha} & \text{ for } n=m_k\in M,\\
\frac{1}{n^\alpha} & \text{ for } n\not\in M. \end{cases}$$

Note that $a^{(\alpha)}\in \ell_1$ because 
$$\sum_{n=1}^{\infty}a^{(\alpha)}(n)\leq \sum_{k=1}^{\infty}\frac{1}{k^\alpha} +\sum_{n=1}^{\infty}\frac{1}{n^\alpha}<\infty.$$
Moreover, $a^{(\alpha)}\in AOS(\I)$ ($a^{(\alpha)}\in AOS(\I^+)$, resp.) as for every $n=m_k\in M$ we have
$$n a^{(\alpha)}(n)= m_k  a^{(\alpha)}(m_k)=\frac{m_k}{k^{\alpha}}\geq k^{a_k-\alpha}\geq k^{a_k-2},$$
which tends to infinity.

Using \cite[Fact~1.2]{MR3491544}, we know that 
in order to show strong $\continuum$-algebrability of $AOS(\I)$ ($AOS(\I^+)$, resp.), 
it is enough  to prove that 
$$P(a^{(\alpha_1)},\dots,a^{(\alpha_q)})\in AOS(\I) \text{\ $(\in AOS(\I^+)$, resp.)}$$  
for any pairwise distinct $\alpha_1,\ldots,\alpha_q\in\Lambda$
and any polynomial 
$$P(x_1,\ldots,x_q)=\sum_{i=1}^p c_i x_1^{\beta_{i,1}}\ldots x_q^{\beta_{i,q}},$$ 
where $c_i$ are nonzero reals and $[\beta_{i,j}]$ is a matrix of nonnegative integers with pairwise distinct, nonzero rows. 

First, observe that for any $m_k\in M$ we have
$$P(a^{(\alpha_1)},\dots,a^{(\alpha_q)}) (m_k)=\sum_{i=1}^p c_i k^{-(\alpha_1 \beta_{i,1}+\ldots+\alpha_q\beta{i,q})}=\sum_{i=1}^p c_ik^{-r_i}, $$
where $r_i=\alpha_1 \beta_{i,1}+\ldots+\alpha_q\beta{i,q}$.

Since $\Lambda$ is linearly independent, all $r_i$ are positive and pairwise distinct. We may assume that $r_1=\min\{r_1,\ldots,r_p\}$. 
Then 
\begin{equation*}
    \begin{split}
\left|m_kP(a^{(\alpha_1)},\ldots,a^{(\alpha_q)}) (m_k)\right| 
&= 
m_k\left|\sum_{i=1}^p c_ik^{-r_i}\right|
\\&= 
m_k \left|c_1k^{-r_1}\right|\cdot \left|1+ \sum_{i=2}^p \frac{c_i}{c_1} \cdot k^{-r_i+r_1}\right|
\\&\geq  
k^{a_k}\left|c_1k^{-r_1}\right|\cdot \left|1+ \sum_{i=2}^p \frac{c_i}{c_1} \cdot k^{-r_i+r_1}\right|
\\&=
\left|c_1\right|\cdot k^{a_k-r_1}\cdot \left|1+ \sum_{i=2}^p \frac{c_i}{c_1} \cdot \frac{1}{k^{r_i-r_1}}\right|
\xrightarrow{k\to\infty}\infty,
    \end{split}
\end{equation*}
because $r_i-r_1>0$ for each $i\geq 2$ and 
$a_k$ tends to infinity. 

Since $M \in\I^+$ ($M\in \I^*$, resp.),  we conclude that $P(a^{\alpha_1},\ldots,a^{\alpha_q})\in AOS(\I)$ ($\in AOS(\I^+)$, resp.).
\end{proof}

The ideal $\I = \{A\subseteq \N: A\cap\{n^n:n\in \N\} \text{ is finite}\}$
satisfies the assumptions of the above theorem, so 
$AOS(\I)$, $AOS(\I^*)$  
and $AOS(\I^+)$ 
are strongly $\continuum$-algebrable. However, 
the nonemptiness of 
$AOS(\I)$, $AOS(\I^*)$  
or $AOS(\I^+)$ does not guarantee even $1$-algebrability of these sets.

\begin{proposition}
\label{prop:necessary-not-sufficient-algebra}
There exists an ideal $\I$ such that $AOS(\I)\neq\emptyset$,  $AOS(\I^+)\neq\emptyset$ and $AOS(\I^*)\neq\emptyset$ but neither $AOS(\I)$ nor $AOS(\I^+)$ nor $AOS(\I^*)$ is 1-algebrable.
\end{proposition}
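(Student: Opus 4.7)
The plan is to construct an explicit ideal $\I$ on $\N$ so that the square of any $a\in \ell_1$ is too tame to witness membership in any of the three $AOS$ families, while at the same time the three families themselves are all nonempty. The key observation is that if $\I$ is large enough to contain $\I_{1/\sqrt{n}}$, then for every $a\in \ell_1$ the sublevel sets $\{n\in\N: na_n^2\geq \varepsilon\}$ automatically lie in $\I_{1/\sqrt{n}}\subseteq \I$, and the P-ideal property of $\I_{1/\sqrt{n}}$ promotes this to $\I^*$-convergence of $(na_n^2)$ to $0$; squares therefore cannot witness $1$-algebrability.

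Concretely, I would fix an infinite $C\subseteq \N$ with $\sum_{n\in C}1/n<\infty$ but $\sum_{n\in C}1/\sqrt{n}=\infty$ (for instance $C=\{\lceil m\log^2(m+1)\rceil : m\in \N\}$) and set
$$\I=\{A\subseteq \N : A\cap C\in \I_{1/\sqrt{n}}\}.$$
After checking that $\I$ is a proper ideal extending $\I_{1/\sqrt{n}}$, I would verify that $C$ lies in $\I_{1/n}\cap \I^+\cap \I^*$: indeed $C\cap C=C\notin \I_{1/\sqrt{n}}$ gives $C\in\I^+$, and $(\N\setminus C)\cap C=\emptyset\in \I_{1/\sqrt{n}}$ gives $\N\setminus C\in \I$, i.e., $C\in\I^*$. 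By Proposition~\ref{prop:AOSs-nonempty}, this alone yields $AOS(\I)\neq\emptyset$, $AOS(\I^*)\neq\emptyset$ and $AOS(\I^+)\neq\emptyset$.

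The decisive step is to show that for every $a\in \ell_1$ the sequence $(na_n^2)$ is $\I^*$-convergent to $0$. First, for each $\varepsilon>0$ the set $B_\varepsilon=\{n\in\N : na_n^2\geq \varepsilon\}$ satisfies $1/\sqrt{n}\leq |a_n|/\sqrt{\varepsilon}$ on $B_\varepsilon$, so $\sum_{n\in B_\varepsilon}1/\sqrt{n}\leq \lVert a\rVert_1/\sqrt{\varepsilon}<\infty$, placing $B_\varepsilon$ in $\I_{1/\sqrt{n}}$. Second, since $\I_{1/\sqrt{n}}$ is a P-ideal, applying that property to the countable family $\{B_{1/k}:k\in \N\}$ yields a set $F\in \I_{1/\sqrt{n}}^*\subseteq \I^*$ with $F\cap B_{1/k}$ finite for every $k$, whence $(na_n^2)_{n\in F}\to 0$. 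Because $\I^*$-convergence implies both $\I$- and $\I^+$-convergence, and $a^2\neq 0$ whenever $a\neq 0$ (coordinatewise), the element $a^2$ of the subalgebra generated by any nonzero $a\in \ell_1$ never belongs to $AOS(\I)\cup AOS(\I^*)\cup AOS(\I^+)$; this rules out $1$-algebrability for all three families.

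The main obstacle I anticipate is calibrating $\I$ so that three competing demands hold at once: it must be large enough to contain $\I_{1/\sqrt{n}}$ (to force $\I^*$-convergence of squares), small enough to keep $C$ outside $\I$ (so $C\in \I^+\cap \I_{1/n}$ witnesses $AOS(\I)\neq\emptyset$), and small enough to place $\N\setminus C$ inside $\I$ (so $C\in \I^*\cap \I_{1/n}$ witnesses $AOS(\I^+)\neq\emptyset$). The asymmetry between the $1/n$- and $1/\sqrt{n}$-summability of $C$ is exactly what reconciles these three constraints.
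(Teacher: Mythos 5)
Your construction is correct and is essentially the paper's own: the paper takes $C=\{n^2:n\in\N\}$ and the ideal $\I=\{A:\sum_{n\in A\cap C}1/\sqrt{n}<\infty\}$ (the same restriction of $\I_{1/\sqrt{n}}$ to a set that is $1/n$-summable but not $1/\sqrt{n}$-summable), gets nonemptiness from the same witness, and kills $1$-algebrability by showing that a power of any $\ell_1$ sequence (the cube there, the square here) is $\I^*$-convergent to zero after multiplication by $n$. The only cosmetic difference is that the paper exhibits the co-small set $\{n:|a_n|\le 1/\sqrt{n}\}\in\I^*$ directly instead of invoking the P-ideal property of $\I_{1/\sqrt{n}}$; both routes rest on the same estimate.
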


\begin{proof}
For a set $B=\{n^2:n\in\N\}$, we define a summable ideal 
$$\I = \left\{A\subseteq\N:\sum_{n\in A\cap B} \frac{1}{\sqrt{n}}<\infty\right\}.$$

Since  $B\in\I^*\cap\I_{1/n}$, the sets  $AOS(\I^+)$, $AOS(\I)$ and $AOS(\I^*)$ are  nonempty by Proposition~\ref{prop:AOSs-nonempty}.

Now, we show that $AOS(\I^*)$ does not contain any  subalgebra generated by a singleton. This will finish the proof as $AOS(\I)\subseteq AOS(\I^*)$ and $AOS(\I^+)\subseteq AOS(\I^*)$.

Take any $a=(a_n)\in AOS(\I^*)$. 

Then $C=\{n\in\N: |a_n|\leq 1/ \sqrt{n}\}\in\I^*$ as otherwise $B\setminus C\in \I^+$ and consequently 
$$\sum_{n\in\N} |a_n|\geq\sum_{n\in B\setminus C } \frac{1}{\sqrt{n}}=\infty, $$
a contradiction with $a_n\in \ell_1$. 

Now, consider the polynomial $P(x)=x^3$. Then for any $n\in C$ we have
$$|n a_n^3|\leq\frac{n}{(\sqrt{n})^3}=\frac{1}{\sqrt{n}}, $$
which tends to zero, thus $P(a_n)\not\in AOS(\I^*)$. 
\end{proof}

%----------------------------------------------------------------------
% Bibliography
%----------------------------------------------------------------------

\bibliographystyle{amsplain}
\bibliography{paper}

\end{document}